\newtheorem{theointro}{Theorem}
\newtheorem{theo}{Theorem}[subsection]
\newtheorem{lem}{Lemma}[subsection]
\newtheorem{prop}{Proposition}[subsection]
\newtheorem{cor}{Corollary}[subsection]
\newtheorem{rem}{Remark}[subsection]
\newtheorem*{finalremark}{Final remark}
\DeclareMathOperator{\Hom}{Hom}
\DeclareMathOperator{\Tr}{Trace}
\DeclareMathOperator{\Tra}{Tr}
\DeclareMathOperator{\Temp}{Temp}
\DeclareMathOperator{\Ad}{Ad}
\DeclareMathOperator{\vol}{vol}
\DeclareMathOperator{\cusp}{cusp}
\DeclareMathOperator{\Res}{Res}
\DeclareMathOperator{\Irr}{Irr}
\DeclareMathOperator{\geom}{geom}
\DeclareMathOperator{\spec}{spec}
\DeclareMathOperator{\cA}{\mathcal{A}}
\DeclareMathOperator{\bR}{\mathbb{R}}
\DeclareMathOperator{\bN}{\mathbb{N}}
\DeclareMathOperator{\bA}{\mathbb{A}}
\DeclareMathOperator{\fg}{\mathfrak{g}}
\DeclarePairedDelimiter\ceil{\lceil}{\rceil}
\DeclarePairedDelimiter\floor{\lfloor}{\rfloor}
\DeclareMathOperator{\cC}{\mathcal{C}}
\DeclareMathOperator{\cU}{\mathcal{U}}
\DeclareMathOperator{\cV}{\mathcal{V}}
\DeclareMathOperator{\cO}{\mathcal{O}}
\DeclareMathOperator{\cZ}{\mathcal{Z}}
\DeclareMathOperator{\cB}{\mathcal{B}}
\DeclareMathOperator{\cW}{\mathcal{W}}
\DeclareMathOperator{\Sm}{Sm}
\DeclareMathOperator{\fl}{fl}
\DeclareMathOperator{\Whitt}{Whitt}
\DeclareMathOperator{\Norm}{Norm}
\DeclareMathOperator{\unit}{unit}
\DeclareMathOperator{\disc}{disc}
\DeclareMathOperator{\Pet}{Pet}
\DeclareMathOperator{\GL}{\mathrm{GL}}
\DeclareMathOperator{\gen}{gen}
\DeclareMathOperator{\Cusp}{\mathrm{Cusp}}
\DeclareMathOperator{\cI}{\mathcal{I}}
\DeclareMathOperator{\ess}{ess}
\DeclareMathOperator{\Herm}{Herm}
\DeclareMathOperator{\bC}{\mathbb{C}}
\DeclareMathOperator{\Sqr}{\mathrm{Sqr}}
\DeclareMathOperator{\unr}{\mathrm{unr}}
\DeclareMathOperator{\alg}{\mathrm{alg}}
\DeclareMathOperator{\PGL}{\mathrm{PGL}}
\DeclareMathOperator{\RTF}{\mathrm{RTF}}
\DeclareMathOperator{\STF}{\mathrm{STF}}
\DeclareMathOperator{\BC}{\mathrm{BC}}
\numberwithin{equation}{subsection}
\newcounter{keepeqno}
\newenvironment{num}
 {\setcounter{keepeqno}{\value{equation}}%
  \begin{list}{(\theequation)}{\usecounter{equation}}%
  \setcounter{equation}{\value{keepeqno}}}
 {\end{list}}
\newcommand{\eq}[1][r]
   {\ar@<-3pt>@{-}[#1]
    \ar@<-1pt>@{}[#1]|<{}="gauche"
    \ar@<+0pt>@{}[#1]|-{}="milieu"
    \ar@<+1pt>@{}[#1]|>{}="droite"
    \ar@/^2pt/@{-}"gauche";"milieu"
    \ar@/_2pt/@{-}"milieu";"droite"}
\title{Multiplicities and Plancherel formula for the space of nondegenerate Hermitian matrices}
\author{Rapha\"el Beuzart-Plessis \protect\footnote{The project leading to this publication has received funding from Excellence Initiative of Aix-Marseille University-A*MIDEX, a French ``Investissements d'Avenir" programme.}}
\begin{document}

\maketitle

\begin{abstract}
This paper contains two results concerning the spectral decomposition, in a broad sense, of the space of nondegenerate Hermitian matrices over a local field of characteristic zero. The first is an explicit Plancherel decomposition of the associated $L^2$ space thus confirming a conjecture of Sakellaridis-Venkatesh in this particular case. The second is a formula for the multiplicities of generic representations in the $p$-adic case that extends previous work of Feigon-Lapid-Offen. Both results are stated in terms of Arthur-Clozel's quadratic local base-change and the proofs are based on local analogs of two relative trace formulas previously studied by Jacquet and Ye and known as (relative) Kuznetsov trace formulas.
\end{abstract}

\tableofcontents

\section{Introduction}

Let $E/F$ be a quadratic extension of local fields and let $n\geqslant 1$ be a positive integer. Set $G=\GL_n(E)$ and let $X=X_n$ be the space of nondegenerate Hermitian matrices i.e.
$$\displaystyle X=\left\{x\in G\mid {}^tx^c=x \right\}$$
where $c$ is the non-trivial Galois involution of $E/F$. There is a natural right action of $G$ on $X$ and $X$ carries an (unique up to a scalar) invariant measure for this action. We also set $G'=\GL_n(F)$ and $\BC:\Irr(G')\to \Irr(G)$ to be Arthur-Clozel's base-change map \cite{AC} between the smooth duals of $G'$ and $G$. The image of $\BC$ is then the set of irreducible smooth representations $\pi$ of $G$ that are Galois invariant i.e. satisfying $\pi\simeq \pi^c$.

The main theme of this paper is, roughly speaking, the ``spectrum'' of the space $X$. More precisely, we will consider the following two specific questions:
\begin{enumerate}[(1)]
\item $L^2$ version: give an explicit decomposition of $L^2(X)$ into a direct integral of unitary irreducible representations (Plancherel decomposition);

\item Smooth version: compute the multiplicity function $\pi\in \Irr(G)\mapsto m(\pi)=\dim \Hom_G(\pi,C^\infty(X))$ where $C^\infty(X)$ is the space of smooth functions on $X$ and $\Hom_G(.,.)$ stands for the space of $G$-equivariant (continuous)\footnote{The continuity requirement is only meaningful in the Archimedean case where $\pi$ should run over the {\em Casselman-Wallach globalizations} of irreducible Harish-Chandra modules and these naturally come with a (Fr\'echet) topology. However, in this paper we will only consider the multiplicities $m(\pi)$ when $F$ is non-Archimedean in which case these subtleties will not intervene.} linear maps.
\end{enumerate}

Note that for $x\in X$, the stabilizer $G_x$ is the unitary group preserving the Hermitian form naturally associated to $x$ and, by Frobenius reciprocity, we have
$$\displaystyle m(\pi)=\sum_{x\in X/G} \dim \Hom_{G_x}(\pi,\bC)$$
where $x$ runs overs $G$-orbits in $X$ (or, equivalently, equivalence classes of Hermitian forms on $E^n$) and $\Hom_{G_x}(\pi,\bC)$ is the space of (continuous) $G_x$-invariant linear forms on $\pi$ (so-called {\em local unitary periods}). The second problem has first been considered by Jacquet \cite{Jac01} who proved, using a global method, that when $n=3$ and $\pi$ is supercuspidal, $m(\pi)\neq 0$ if and only if $\pi\simeq \pi^c$ (i.e. $\pi$ is in the image of $\BC$) in which case each of the space $\Hom_{G_x}(\pi,\bC)$ is one-dimensional (so that $m(\pi)=2$ sinc $X$ has two $G$-orbits in this case). Following the same global approach and combining it with local methods, Feigon-Lapid-Offen \cite{FLO} have obtained extremely fine results on the multiplicities $m(\pi)$. In this paper, we will only propose a modest improvement on their work for generic representations. On the other hand, our solution to problem (1) seems new as it hasn't been adressed in the litterature yet but, again, to work it out we will make an extensive use of the work \cite{FLO} (which is again a generalization, and refinement, of Jacquet's work for $n=3$ \cite{Jac01}). The answers we obtain for both problems rely heavily on the base-change map $\BC$.

\subsection{Plancherel decomposition}

Our main result on problem (1) (Theorem \ref{theo PLancherel formula}) can be stated as follows.

\begin{theointro}\label{Planch intro}
There is a (natural) isomorphism of unitary $G$-representations
$$\displaystyle L^2(X)\simeq \int_{\Temp(G')}^{\oplus} \BC(\sigma) d\mu_{G'}(\sigma)$$
where $\Temp(G')\subset \Irr(G')$ is the tempered dual of $G'$ and $d\mu_{G'}$ the Plancherel measure for the group $G'$.
\end{theointro}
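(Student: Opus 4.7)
The plan is to establish Theorem \ref{Planch intro} by comparing two local Kuznetsov-type relative trace formulas in the spirit of Jacquet-Ye but carried out purely locally. For test functions $f\in \cS(X)$ and $f'\in \cS(G')$, I will define Bessel distributions $J_X(f)$ and $J_{G'}(f')$ by integrating along a pair of upper triangular unipotent subgroups of $\GL_n$ against a generic additive character. Each of these distributions admits two complementary expansions: a \emph{geometric} expansion indexed by the relevant orbits/double cosets, and a \emph{spectral} expansion involving Whittaker periods.

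The first main step is to develop the spectral sides. For $G'$, this is essentially the classical Harish-Chandra Plancherel formula for $\GL_n(F)$ rewritten through Whittaker models: $J_{G'}(f')$ decomposes as an integral over $\Temp(G')$ against the Plancherel measure $d\mu_{G'}$, the integrand being the Bessel character of $\sigma$ evaluated at $f'$. For $X$, one first establishes an \emph{a priori} abstract spectral expansion of $J_X(f)$ by applying the (as yet unknown) Plancherel decomposition of $L^2(X)$ to the Bessel distribution, with integrand given by Bessel characters of the representations actually occurring in $L^2(X)$; this rests on a careful study of the continuous spectrum of $X$ and on the fact that $X$ carries a unique open $N$-orbit whose stabilizer is trivial, so that every tempered representation contributing to $L^2(X)$ is necessarily generic.

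The second main step is a geometric comparison. Building on the matching of orbital integrals established in \cite{FLO} (and their Archimedean extensions), I match the geometric expansions of $J_X$ and $J_{G'}$ for transferring pairs of test functions $f\leftrightarrow f'$: the local orbits in $X$ and the regular semisimple double cosets in $N'\backslash G'/N'$ are in natural correspondence, and suitably normalized orbital integrals agree. Since the geometric sides coincide, the spectral sides must coincide as well, and one extracts an identity of Bessel characters relating representations appearing in $L^2(X)$ to tempered representations of $G'$ via base change. Combined with the classification by \cite{FLO} of representations admitting nonzero unitary periods, this forces the spectral measure on the $X$-side to be the pushforward under $\BC$ of $d\mu_{G'}$.

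The main obstacle will be to justify the \emph{a priori} spectral expansion on $X$ and rule out any non-tempered or residual contribution: the continuous spectrum of $L^2(X)$ has to be controlled sharply enough for the Bessel distribution to separate representations, which requires proving convergence of the relevant Whittaker-type integrals on each $\BC(\sigma)$ and exhibiting explicit intertwining maps from the Whittaker model of $\BC(\sigma)$ into $C^\infty(X)$. Once this analytic input is in hand, the transfer identity and Parseval's formula on $G'$ combine to yield the claimed isomorphism, with the multiplicity one of each $\BC(\sigma)$ in $L^2(X)$ arising from the uniqueness of the Whittaker model on $G$.
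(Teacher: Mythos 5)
Your overall strategy --- a local Jacquet--Ye comparison of Kuznetsov-type distributions on $X$ and on $G'$, with the geometric sides matched by Jacquet's transfer and the $G'$-spectral side given by the Harish-Chandra Plancherel formula in Whittaker form --- is indeed the approach of the paper. But two of your key steps do not work as described. First, you propose to obtain the spectral expansion of $J_X$ by inserting the \emph{unknown} Plancherel decomposition of $L^2(X)$ and then solving for the measure by comparison. This is both unnecessary and genuinely problematic: interchanging the regularized $(N,\psi_n)$-integral with an abstract direct integral, and showing a priori that the spectrum of $L^2(X)$ is generic and tempered with Bessel characters that separate points, are serious analytic problems that you only flag as ``the main obstacle'' without a plan to resolve them. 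The paper avoids them entirely: the spectral expansion of the $X$-side distribution is deduced \emph{from} the geometric matching, because the FLO functionals $\alpha^\sigma$ are \emph{defined} by the identity $J_\sigma(\varphi)=I_\sigma(f')$ for matching $(\varphi,f')$; hence $J=J_{\geom}=I_{\geom}=I_{\spec}=\int_{\Temp(G')}J_\sigma(\varphi_1)\overline{J_\sigma(\varphi_2)}\,d\mu_{G'}(\sigma)$, with no input about $L^2(X)$ beyond temperedness of the pairs $(G,U(V))$ (needed just to make sense of the regularized integral).

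Second, and more fundamentally, even a complete identity of Bessel distributions does not yield the theorem: the distribution $J(\varphi_1,\varphi_2)=\int_N^*\langle R(u)\varphi_1,\varphi_2\rangle_X\,\psi_n(u)^{-1}du$ only sees a rank-one quotient of each multiplicity space, so by itself it cannot determine $\langle\cdot,\cdot\rangle_X$. The missing step is the Whittaker--Fourier inversion over the mirabolic subgroup: $\langle\varphi,\varphi\rangle_X=\int_{N\backslash P}J(R(p)\varphi,R(p)\varphi)\,dp$ combined with $\int_{N\backslash P}\lvert J_\sigma(R(p)\varphi)\rvert^2dp=\langle\varphi,\varphi\rangle_{X,\sigma}$; this is what converts the Bessel identity into the Plancherel identity, and it is absent from your plan (``Parseval's formula on $G'$'' does not perform this function). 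Relatedly, your appeal to uniqueness of Whittaker models to get ``multiplicity one of each $\BC(\sigma)$ in $L^2(X)$'' is off: $\BC(\sigma)$ occurs with multiplicity equal to the cardinality of the fiber of $\BC$ (generically $2$, e.g. $\sigma$ and $\sigma\otimes\eta$ have the same base change), and the correct bookkeeping is that the direct integral is indexed by $\sigma\in\Temp(G')$, each $\sigma$ contributing exactly one copy because the Hermitian form $\langle\cdot,\cdot\rangle_{X,\sigma}$ is by construction the pullback of the Whittaker inner product under the single map $\varphi\mapsto\varphi\cdot\alpha^\sigma$.
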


This theorem confirms, in the particular case at hand, a general conjecture of Sakellaridis-Venkatesh on the $L^2$-spectrum of spherical varieties \cite[Conjecture 16.2.2]{SV}. More precisely, Sakellaridis and Venkatesh associate to $X$ a dual group $\check{G}_X=\GL_n(\bC)=\check{G}'$ together with a ``distinguished morphism'' $\check{G}_X\to \check{G}$ to the Langlands dual group of $G$ (seen as an algebraic group over $F$). In \cite{SV}, only splits groups are considered so that there is no need to consider $L$-groups. This is not precisely the case here (since the group $G$ is not split over $F$) but the distinguished morphism naturally extends to the base-change map between $L$-groups ${}^L G'\to {}^L G$ and an obvious extrapolation\footnote{That the ``$L$-group'' of $X$ should really be ${}^L G'$ equipped with the base-change map ${}^L G'\to {}^L G$ is also consistent with a conjecture of Jacquet on distinction of irreducible representations by unitary groups. A refined version of this conjecture, due to Feigon-Lapid-Offen, will be discussed below.} of \cite[Conjecture 16.2.2]{SV} predicts a decomposition like the one of Theorem \ref{Planch intro}.

An immediate consequence of Theorem \ref{Planch intro} is to the determination of the so-called ``relative discrete series'' for $X$ i.e. of the unitary representations of $G$ that embed in the space $L^2(X,\chi)$ for some character $\chi$ of the center: these are precisely the base-change of discrete series of $G'$ (see Corollary \ref{cor discrete series}). Note that these representations are always tempered but not necessarily discrete series of the group $G$. It was already shown by Jerrod Smith \cite{Smith} that these representations are indeed relative discrete series but he didn't prove that they actually exhaust all of them.

The proof of Theorem \ref{Planch intro} actually gives more information. Namely, we define $G$-invariant semi-definite scalar products $\langle .,.\rangle_{X,\sigma}$ on $C_c^\infty(X)$, that are indexed by the irreducible tempered representations $\sigma$ of $G'$ and factorize through a quotient isomorphic to $\BC(\sigma)^\vee$ (for technical reasons, we prefer to take the smooth contragredient of the base-change), such that
\begin{equation}
\displaystyle \langle \varphi_1,\varphi_2\rangle_X=\int_{\Temp(G')} \langle \varphi_1,\varphi_2\rangle_{X,\sigma} d\mu_{G'}(\sigma)
\end{equation}
for every $\varphi_1,\varphi_2\in C_c^\infty(X)$ where $\langle .,.\rangle_X$ stands for the $L^2$-scalar product on $X$. That such a formula implies a decomposition like the one of Theorem \ref{Planch intro} follows from Bernstein \cite{Ber3} interpretation of abstract Plancherel decompositions. The scalar products $\langle .,.\rangle_{X,\sigma}$ are built on certain canonical $G$-equivariant embeddings $\cW(\BC(\sigma))\to C^\infty(X)$, where $\cW(\BC(\sigma))$ denotes the Whittaker model of $\BC(\sigma)$ (for a certain choice of Whittaker datum), that have been introduced by Feigon-Lapid-Offen \cite{FLO} in their work on the factorization of global unitary periods. By Frobenius reciprocity, these embeddings are equivalent to the data of $G_x$-invariant functionals $\alpha^\sigma_x: \cW(\BC(\sigma))\to \bC$ for $x\in X$ satisfying $\alpha^\sigma_{xg}=\alpha^\sigma_x \circ \BC(\sigma)(g)$ for $g\in G$. We call the $\alpha^\sigma_x$, $x\in X$, the {\em FLO functionals} associated to $\sigma$. The definition of those functionals by Feigon-Lapid-Offen is actually implicit: these are characterized by a series of identities between {\em relative Bessel distributions} through a certain transfer of functions $\varphi\in C_c^\infty(X)\mapsto f'\in C_c^\infty(G')$ that was established by Jacquet \cite{Jac}. One of the main result of \cite{FLO} is that these functionals give a factorization of global unitary periods of (cuspidal) automorphic forms on $\GL_n$ (thus generalizing a result of Jacquet \cite{Jac01} in the case $n=3$). In Section \ref{sect global periods}, we will reinterpret their result in a form that make the relation to the local scalar products $\langle .,.\rangle_{X,\sigma}$ more transparent. This simple cosmetic exercise has the pleasant feature of being remarkably aligned with certain general speculations of Sakellaridis-Venkatesh on relations between global automorphic periods and local Plancherel formulas \cite[\S 17]{SV}.

\subsection{Multiplicities}

As already said, the multiplicity $m(\pi)$ has already been extensively studied by Jacquet \cite{Jac01} and Feigon-Lapid-Offen \cite{FLO}. Their most complete result are for generic representations: when $\pi$ is generic, \cite[Theorem 0.2]{FLO} gives a lower bound for $m(\pi)$ which is attained for ``almost all'' generic $\pi$. We henceforth assume that $F$ is a $p$-adic field. In order to state the result of \cite{FLO} and our (small) improvement on it, we find it convenient to equip the sets $\Irr(G')$ and $\Irr(G)$ with structures of algebraic varieties over $\bC$. This construction is surely well-known, it is simply based on Langlands classification, but in lack of a proper reference we explain it in Section \ref{section structure of alg var} (see however \cite{Pras} for a similar construction on the Galois side). For these extra structures, the map $\BC$ is a finite morphism of algebraic varieties and we denote by $\deg \BC: \Irr(G)\to \bN$ the associated degree function (it sends a representation $\pi\in \Irr(G)$ to the sum of the degrees of $\BC$ at the elements in the fiber $\BC^{-1}(\pi)$). Since we are in the $p$-adic case, $G$ has two orbits in $X$ (corresponding to the two isomorphism classes of Hermitian spaces of dimension $n$). The following result is a restatement of \cite[Theorem 0.2]{FLO}.

\begin{theointro}[Feigon-Lapid-Offen]
Suppose that $\pi\in \Irr(G)$ is generic. Then, we have $m(\pi)\geqslant \deg \BC(\pi)$. More precisely, for each $x\in X$ we have
\begin{equation}\label{ineq FLO1}
\displaystyle \dim \Hom_{G_x}(\pi,\bC)\geqslant \left\{\begin{array}{ll}
\ceil{\frac{\deg \BC(\pi)}{2}} & \mbox{ if } G_x \mbox{ is quasi-split}, \\
\\
\floor{\frac{\deg \BC(\pi)}{2}} & \mbox{ otherwise.}
\end{array} \right.
\end{equation}
Moreover, if $\BC$ is unramified at (every point in the fiber of) $\pi$ then equality holds in \eqref{ineq FLO1}.
\end{theointro}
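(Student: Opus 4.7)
The plan is to construct enough explicit elements of $\Hom_{G_x}(\pi,\bC)$ to produce the lower bound and then to separate by orbit via a sign rule. First, I reduce to the case where $\pi$ is in the image of $\BC$. If $\pi$ is generic and not Galois invariant then $\Hom_{G_x}(\pi,\bC)=0$ for every $x$ (the vanishing statement for unitary distinction of non-Galois-invariant representations, going back to Jacquet and sharpened by Feigon--Lapid--Offen) and simultaneously $\deg \BC(\pi)=0$, so the inequality is vacuous. Assume henceforth that $\pi \simeq \pi^c$, so the fiber $\BC^{-1}(\pi)$ is nonempty and finite.

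For each $\sigma \in \BC^{-1}(\pi)$ and each of the two orbit representatives $x_1,x_2\in X$, the FLO functional $\alpha^\sigma_{x_i}$ provides, via the identification of $\pi$ with its Whittaker model $\cW(\BC(\sigma))$ (legitimate because $\pi$ is generic), an element of $\Hom_{G_{x_i}}(\pi,\bC)$. The first technical task is to prove linear independence of the family $\{(\alpha^\sigma_{x_1},\alpha^\sigma_{x_2})\}_{\sigma\in \BC^{-1}(\pi)}$ inside $\Hom_{G_{x_1}}(\pi,\bC)\oplus \Hom_{G_{x_2}}(\pi,\bC)$. I would extract this from the spectral decomposition encoded by the Plancherel scalar products $\langle \cdot,\cdot\rangle_{X,\sigma}$ of Theorem~\ref{Planch intro}: since each $\langle \cdot,\cdot\rangle_{X,\sigma}$ factorizes through a quotient of $C_c^\infty(X)$ isomorphic to $\BC(\sigma)^\vee$, distinct $\sigma$ in the fiber cut out orthogonal spectral components and their FLO contributions cannot cancel. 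This already establishes $m(\pi)\geqslant \deg \BC(\pi)$ in the tempered generic case, where $\deg \BC$ equals one at each point of the fiber.

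The orbit-by-orbit refinement producing the ceiling/floor dichotomy should come from a natural involution on $\BC^{-1}(\pi)$, or equivalently a sign $\epsilon(\sigma)\in \{\pm 1\}$ attached to each $\sigma$ coming from a character of the Langlands component group, together with a compatibility relating $\alpha^\sigma_{x_1}$ and $\alpha^\sigma_{x_2}$ that forces roughly half of the fiber to support the quasi-split orbit and the other half the non-quasi-split one. Positivity of the Whittaker-type inner product on the quasi-split orbit (which is, up to normalization, a genuine $L^2$ integral against FLO's canonical invariant measure there) should break ties in favor of the quasi-split orbit, producing the ceiling there and the floor on the other. To extend from generic tempered $\pi$ to arbitrary generic $\pi$, I would use Langlands classification to write $\pi$ as the full parabolic induction of a generic essentially tempered representation $\tau$, perform a Mackey-type decomposition of $X$ under the inducing parabolic to relate $\Hom_{G_x}(\pi,\bC)$ to analogous Hom spaces attached to $\tau$ on smaller Hermitian spaces, and tally the contributions using the tempered case, tracking how the fiber $\BC^{-1}(\pi)$ assembles from those of the pieces of $\tau$.

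Equality when $\BC$ is unramified at $\pi$ should come from a matching dimension upper bound. In that case the fiber has the expected ``reduced'' size, each $\sigma$ contributes exactly once, and a dimension count using the Plancherel spectral data---explicitly, identifying $\sum_x \Hom_{G_x}(\pi,\bC)$ with the contribution of the $\BC^{-1}(\pi)$-component to an appropriate local relative Bessel distribution---closes the inequality. The main obstacles I anticipate are (i) the linear independence of FLO functionals across the fiber and (ii) the orbit-by-orbit sign rule yielding the ceiling/floor split; both demand fine information on how the FLO functionals transfer to $G'$ via the Jacquet--Ye relative Kuznetsov trace formula, which is the deepest input. By contrast the Mackey induction step and the upper bound in the unramified case amount to bookkeeping once the tempered case is in hand.
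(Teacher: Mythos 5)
The paper does not prove this statement: it is explicitly a restatement of \cite[Theorem 0.2]{FLO} (quoted again as Theorem \ref{theo mult FLO} in Section \ref{section result multiplicities}), and the author takes it as an input. So there is no internal proof to compare yours against; what can be assessed is whether your sketch would actually work, and there are two genuine gaps.

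First, the linear independence step fails as stated. All $\sigma$ in the fiber $\BC^{-1}(\pi)$ have the same base change $\pi$, so the Hermitian forms $\langle\cdot,\cdot\rangle_{X,\sigma}$ all factor through the \emph{same} coinvariant quotient $C_c^\infty(X)_\pi$; distinct points of the fiber do not ``cut out orthogonal spectral components'' in any sense that yields independence of the functionals (and the Plancherel measure is continuous, so single points carry no mass). In fact the restrictions to a single orbit are \emph{not} independent: by \cite[Lemma 3.5]{FLO} (quoted in Section \ref{sect global periods} of the paper) one has $\varphi_x\cdot\alpha^{\sigma\otimes\eta}=\eta(\disc(x))\,\varphi_x\cdot\alpha^{\sigma}$, so $\alpha^{\sigma}_x$ and $\alpha^{\sigma\otimes\eta}_x$ are proportional on each orbit, with opposite signs on the two orbits. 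This is exactly why the orbitwise bound is $\ceil{\deg\BC(\pi)/2}$ and $\floor{\deg\BC(\pi)/2}$ rather than $\deg\BC(\pi)$. The real content is (i) this $\eta$-twisting relation and (ii) the nonvanishing of $\alpha^\sigma_x$ on the quasi-split orbit, which FLO extract from the nonvanishing of the Bessel distribution $I_\sigma$ through the trace-formula/Bessel-identity comparison — not from positivity of an $L^2$ pairing, and not from a ``sign attached to the component group.'' Your proposed mechanisms do not deliver either ingredient.

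Second, there is a circularity of architecture: you invoke Theorem \ref{Planch intro}, but in this paper the Plancherel decomposition is proved \emph{using} the FLO functionals (whose existence is \cite[Theorem 12.4]{FLO}) and is logically downstream of the FLO package; moreover it only concerns tempered representations, whereas the statement is for all generic $\pi$. Finally, the equality in the unramified case requires a genuine \emph{upper} bound on $m(\pi)$; in the paper's framework such an upper bound is precisely the hard new content of Sections \ref{section second step multiplicities}--\ref{section end of proof multiplicities} (local trace formula plus the Whittaker Paley--Wiener theorem), so it cannot be dismissed as a dimension count that ``closes the inequality.''
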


Our main result is that the above lower bound is actually always attained. More precisely, we show.

\begin{theointro}\label{theo multiplicities}
Let $\pi\in \Irr(G)$ be generic. Then, we have $m(\pi)=\deg \BC(\pi)$. In particular, equality always holds in \eqref{ineq FLO1}.
\end{theointro}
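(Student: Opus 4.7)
The approach is to complement the Feigon-Lapid-Offen lower bound with a matching upper bound via a comparison of two local Kuznetsov-type trace formulas through Jacquet's transfer of test functions $\varphi \in C_c^\infty(X) \leftrightarrow f' \in C_c^\infty(G')$. On the group side, the classical local Kuznetsov trace formula for $G' = \GL_n(F)$ has spectral side the Plancherel integral $\int_{\Temp(G')} B^{G'}_\sigma(f')\, d\mu_{G'}(\sigma)$ of ordinary Bessel distributions. On the $X$-side, the Plancherel decomposition of Theorem \ref{Planch intro}, combined with the FLO scalar products $\langle\cdot,\cdot\rangle_{X,\sigma}$, allows one to express the spectral side of a local relative Kuznetsov trace formula as an integral of relative Bessel distributions attached to the FLO functionals $\alpha^\sigma_x$. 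Jacquet's transfer matches the (regular) geometric sides of both formulas, so the spectral sides agree as distributions on transferred pairs $\varphi\leftrightarrow f'$.

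\textbf{Extracting the upper bound.} This identity of spectral sides, together with the defining FLO identity relating $B^X_{\BC(\sigma),x}$ to $B^{G'}_\sigma$, says that the FLO relative Bessel distributions exhaust the spectral content of the $X$-side trace formula. If some generic tempered $\pi$ and some $x \in X/G$ had $\dim\Hom_{G_x}(\pi,\bC)$ strictly exceeding the FLO bound in \eqref{ineq FLO1}, a $G_x$-invariant functional on $\pi$ independent of the FLO functionals would give an unmatched relative Bessel distribution, contradicting the spectral identity. Separation of the two orbits (quasi-split vs. non-quasi-split) can be achieved by test functions supported near a single $G$-orbit, and isolation of a given $\pi$ from the continuous Plancherel integral uses Bernstein's center together with the algebraic variety structure on $\Irr(G')$ and $\Irr(G)$ from Section \ref{section structure of alg var}. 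This yields the per-orbit equality in \eqref{ineq FLO1} for every generic tempered $\pi$.

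\textbf{From tempered to all generic.} Since $d\mu_{G'}$ is supported on $\Temp(G')$, the spectral comparison only directly controls tempered representations. To cover every generic $\pi \in \Irr(G)$, use Langlands classification: any such $\pi$ is a full parabolic induction from a tempered generic representation along a complex parameter $s$. Both $m$ and $\deg \BC$ vary algebraically in $s$ on the open irreducibility locus (which contains the tempered axis), so once equality is known on that axis, it extends by analytic continuation combined with the FLO inequality $m(\pi) \geq \deg\BC(\pi)$ to force equality everywhere.

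\textbf{Main obstacle.} The chief technical difficulty lies in setting up the local relative Kuznetsov trace formula for $X$ with both sides convergent and with the spectral side written recognizably in FLO form---this is precisely where Theorem \ref{Planch intro} (and in particular the explicit expression of $\langle\cdot,\cdot\rangle_{X,\sigma}$ via the FLO functionals) is indispensable. A secondary delicacy is the separation of the two $G$-orbits on $X$, needed for the refined per-orbit inequalities \eqref{ineq FLO1} rather than just the total equality $m(\pi) = \deg \BC(\pi)$.
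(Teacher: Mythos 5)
Your overall architecture (local Kuznetsov trace formulas on both sides, Jacquet transfer to match geometric sides, Bernstein-center isolation, reduction to the tempered case) is the paper's, but two of your steps contain genuine gaps.

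First, the mechanism you propose for the upper bound does not give the right count. You argue that an invariant functional ``independent of the FLO functionals'' would produce an unmatched relative Bessel distribution; taken literally, this bounds $m(\pi)$ by the number of FLO relative characters attached to the fiber, i.e.\ by $\lvert \BC^{-1}(\pi)\rvert$. At a point where $\BC$ is ramified one has $\lvert\BC^{-1}(\pi)\rvert<\deg\BC(\pi)\leqslant m(\pi)$, so your bound would contradict the Feigon--Lapid--Offen lower bound rather than complete it. What the paper actually proves (Proposition \ref{prop1 1st step mult}) is that every abstract relative character supported on $\pi$ lies in the \emph{closure} of the span of the $J_\sigma$ for $\sigma$ ranging over the base-change preimage of the whole tempered component of $\pi$ --- this requires extending the spectral side of the trace formula to the Harish-Chandra Schwartz space $\cC(X)$ --- and then identifies the relevant quotient with $\bC[(\Omega'_\pi)^\lambda]/\mathfrak{m}_{\lambda(\pi)}\bC[(\Omega'_\pi)^\lambda]$, whose dimension is the scheme-theoretic degree $\deg\BC(\pi)$ (capturing ``derivatives'' of $\sigma\mapsto J_\sigma$ at ramification points). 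The tool that makes this identification possible is the scalar Whittaker Paley--Wiener theorem (Theorem \ref{theo Whittaker-Paley-Wiener theorem}), which shows that $f'\mapsto(\sigma\mapsto I_\sigma(f'))$ has image the full algebra $\cZ_{\gen}(G')$; your proposal omits this ingredient entirely, and without it the dimension count cannot be carried out. (Incidentally, the per-orbit refinement of \eqref{ineq FLO1} needs no separation of orbits by test functions: the per-orbit lower bounds already sum to $\deg\BC(\pi)$, so the total upper bound forces equality in each.)

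Second, the passage from tempered to general generic $\pi$ by ``analytic continuation'' is not justified. The multiplicity $m$ is an integer-valued function that is not known to vary algebraically in the Langlands parameter; the standard semicontinuity of $\dim\Hom$ in families lets $m$ jump \emph{up} on proper closed subsets, so knowing $m=\deg\BC$ on the (Zariski-dense) tempered locus together with $m\geqslant\deg\BC$ everywhere does not preclude a strict jump at a non-tempered point of the exceptional locus. The paper instead proves that both $m$ and $\deg\BC$ are \emph{multiplicative} with respect to the standard-module factorization $\pi=\tau_1\lvert\det\rvert^{\lambda_1}\times\cdots\times\tau_t\lvert\det\rvert^{\lambda_t}$ (Proposition \ref{prop first reduction}); the multiplicativity of $m$ rests on the Feigon--Lapid--Offen analysis of $P$-orbits on $X$ together with Casselman's temperedness criterion to rule out contributions from non-open orbits. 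You would need to supply an argument of this kind to close the reduction.
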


This result has been conjectured Feigon-Lapid-Offen \cite[Conjecture 13.17]{FLO} and it also confirms (in this particular case) a general conjecture of Prasad for Galois pairs \cite{Pras}.

\subsection{Tools: local trace formulas and Whittaker Paley-Wiener theorem}

The main new tools we introduce to prove Theorems \ref{Planch intro} and \ref{theo multiplicities} are certain local analogs of relative trace formulas first introduced in a global setting by Jacquet and Ye \cite{Ye88}, \cite{Ye89}, \cite{JY90}, \cite{JY92}, \cite{JY}, \cite{Jac01}. A comparison of these global relative trace formulas, that was established in general by Jacquet \cite{Jac}, \cite{Jac04}, \cite{Jac05}, led to a complete characterization of cuspidal automorphic representations that are {\em distinguished} by a given unitary group by Jacquet \cite{Jac10} (for the quasi-split unitary group) and then in general by Feigon-Lapid-Offen \cite{FLO}. This can be seen as a solution to the global analog of problem (2) above. Roughly speaking, we will deduce Theorems \ref{Planch intro} and \ref{theo multiplicities} through a similar local comparison. We note that local versions of the Jacquet-Ye trace formulas have been developed by Feigon \cite{Fe} in the case $n=2$ so that our treatment can be seen as a generalization of her work to arbitrary rank.

To be more precise, we develop local analogs of both the Kuznetsov trace formula (for an arbitrary quasi-split group) and of the relative Kuznetsov trace formula for $X$: these are identities relating so-called (relative) {\em Bessel distributions} (the spectral side) to (relative) orbital integrals (the geometric side). We refer the reader to the core of the text for details and precise statements (see in particular Theorems \ref{theo local Kuznetsov} and \ref{theo local TF for X}). We content ourself here to mention that these relative trace formulas are easy to establish. Namely, contrary to other formulas of the same sort, we can completely avoid analytic difficulties by using a regularization process of certain divergent oscillatory integrals due to Sakellaridis-Venkatesh \cite[Corollary 6.3.3]{SV} and generalized by Lapid-Mao in \cite[Proposition 2.11]{LM2} (this last result roughly says that integration over a maximal unipotent subgroup against a generic character of the latter behaves, in some respect, as a compact integration).

Another result that we will need to establish Theorem \ref{theo multiplicities} is a certain scalar Whittaker Paley-Wiener theorem describing, in the case of a quasi-split reductive $p$-adic group $G$, the image by some ``Bessel transform'' of the space of test functions $C_c^\infty(G)$ (see Section \ref{section PW theorem} for a precise statement). The result is far simpler to state than for the usual trace Paley-Wiener theorem \cite{BDK} and it is moreover an easy consequence of the theory of Jacquet's functionals. However, we have not seen this theorem stated elsewhere in the litterature (maybe because of simplicity).


\subsection{Acknowledgement}

The results of this paper have been announced by the author in his ``Cours Peccot" in April-May 2017. I would like to thank the Coll\`ege de France for giving me the opportunity to give this course and I apologize for the delay in writing the details of this work.

I am very grateful to Jean-Loup Waldspurger for a thorough reading of a first version of this manuscript allowing for many improvements and in particular for catching a fatal error in the proof of Proposition \ref{prop local isom lambda}.

The project leading to this publication has received funding from Excellence Initiative of Aix-Marseille University-A*MIDEX, a French ``Investissements d'Avenir" programme.

\subsection{General notation}

\begin{itemize}
\item In the whole paper, $F$ denotes a local field of characteristic zero (Archimedean or non-Archimedean). In some specific sections (in particular, in the whole of Chapter \ref{Chapter multiplicities}), $F$ will be assumed to be $p$-adic but such restriction will always be explicitely stated.
\item For a smooth manifold $X$, we denote by $C_c^\infty(X)$ the usual space of test functions on $X$. For a totally disconnected locally compact space $X$, we denote by $C_c^\infty(X)$ the space of locally constant compactly supported complex functions on $X$.
\item If $f$ and $g$ are two positive functions on a set $X$, we write
$$\displaystyle f(x)\ll g(x),\;\; x\in X,$$
to mean that there exists a constant $C>0$ such that $f(x)\leqslant Cg(x)$ for every $x\in X$. If we want to emphasize that the implicit constant depends on auxilliary parameters $y_1,\ldots,y_k$ we write $f(x)\ll_{y_1,\ldots,y_k} g(x)$ instead.
\item The symbol $\widehat{\otimes}$ stands for the projective completed tensor product of locally convex topological vector spaces (cf.\, \cite[Chap. 43]{Tr}; this will only be used for Fr\'echet spaces).
\item When a group $G$ acts on the right (resp. on the left) of a set $X$, we denote by $R$ (resp. $L$) the corresponding action by translation on the space of functions on $X$.
\item If $G$ is a group and $S$ a subset of it, we write $\Norm_G(S)$ for the normalizer of $S$ in $G$.
\item For every integer $n\geqslant 0$, we denote by $\mathfrak{S}_n$ the symmetric group in $n$ letters. 
\item If $G$ is a Lie group, we write $\fg$ for its Lie algebra and $\cU(\fg)$ for the corresponding enveloping algebra.
\item Let $G$ be a real or $p$-adic reductive group. By a {\em smooth representation} of $G$ we mean a representation over a complex vector space with open stabilizers in the $p$-adic case, a smooth admissible Fr\'echet representation of moderate growth in the sense of Casselman-Wallach in the real case \cite{Cas}, \cite[Chap. 11]{WallII}. If $\pi$ is a smooth irreducible representation of $G$, we denote by $\pi^\vee$ its smooth contragredient (that is the Casselman-Wallach globalization of the admissible dual of the underlying Harish-Chandra module in the real case).
\item We denote the set of isomorphism classes of smooth irreducible representations of $G$ by $\Irr(G)$ and we write $\Temp(G)\subset \Irr(G)$ for the subset of tempered representations.
\item If $G$ is a $p$-adic reductive group, $H$ is a closed subgroup and $\pi$, $\sigma$ are smooth representations of $G$ and $H$ respectively, we write $\Hom_H(\pi,\sigma)$ for the space of $H$-equivariant linear maps $\pi\to \sigma$.
\item Still in the $p$-adic case, if $P$ is a parabolic subgroup of $G$ and $\sigma$ a smooth representation of one of its Levi component, we denote by $I_P^G(\sigma)$ the normalized smoth parabolic induction of $\sigma$.
\end{itemize}

\section{Local Kuznetsov trace formula and a scalar Whittaker Paley-Wiener theorem}\label{Chap Kuznetsov}

Let $F$ be a local field of characteristic zero (Archimedean or $p$-adic) and $\underline{G}$ be a quasi-split connected reductive group defined over $F$. The main goal of this chapter is to develop a local Kuznetsov trace formula for $\underline{G}(F)$ in the spirit of the work of Feigon \cite{Fe} for the group $\PGL_2(F)$.

More precisely, let $\underline{B}=\underline{T}\underline{N}$ be a Borel subgroup of $\underline{G}$ (defined over $F$) and $\underline{B}^-=\underline{T}\underline{N}^-$ be the opposite Borel subgroup (with respect to $\underline{T}$). We set $G=\underline{G}(F)$, $B=\underline{B}(F)$, $T=\underline{T}(F)$, $N=\underline{N}(F)$ and $N^-=\underline{N}^-(F)$. We denote by $\delta_B$ the modular character of $B$ and we fix an element $w\in G$ such that $N^-=w^{-1}Nw$. Let $\xi:N\to \mathbb{S}^1$ be a non-degenerate character (i.e. whose stabilizer in $T$ is reduced to the center of $G$). We define a non-degenerate unitary character $\xi^-:N^-\to \mathbb{S}^1$ by $\xi^-(u^-)=\xi(wu^-w^{-1})$ for every $u^-\in N^-$.

For $f_1,f_2\in C_c^\infty(G)$, we consider the kernel $K_{f_1,f_2}$ of the biregular action of $f_1\otimes \overline{f_2}$ on $L^2(G)$. Then, the distribution of interest is obtained, formally, by integrating this kernel over $N^-\times N$ against the character $(u^-,u)\mapsto \xi^-(u^-)^{-1}\xi(u)$. This expression is usually divergent and needs to be suitably regularized (see Section \ref{section Kuznetsov geom}). Once this is done, the resulting distribution admits two natural and distinct expansions: one geometric, in terms of relative orbital integrals, and one spectral, in terms of Bessel distributions also called relative characters. The equality between the two expansions is the aforementioned local Kuznetsov trace formula (cf.\, Theorem \ref{theo local Kuznetsov}).

The statements and proofs of these two expansions are given in Sections \ref{section Kuznetsov geom} and \ref{section Kusspec} respectively. For technical reasons, it will be more convenient to work with the Harish-Chandra Schwartz space $\cC(G)$ rather than $C_c^\infty(G)$. We recall the definition as well as basic properties of $\cC(G)$ and related function spaces in Section \ref{section HCS}. Finally, in Section \ref{section PW theorem} we give a scalar Paley-Wiener theorem for Bessel distributions in the $p$-adic case whose proof is an easy consequence of the theory of Jacquet's functionals (although we will rather work with the more convenient tool of the regularized $\xi$-integral introduced by Lapid-Mao \cite{LM2}).

We equip $N$ and $N^-$ with Haar measures such that the isomorphism $N\simeq N^-$, $u\mapsto w^{-1}uw$, is measure-preserving. We also endow $G$ and $T$ with Haar measures such that the following integration formula
\begin{align}\label{int formula}
\displaystyle \int_G f(g)dg=\int_{N^-\times T \times N} f(u^-tu)\delta_B(t)du^-dtdu
\end{align}
is satisfied for every $f\in L^1(G)$.

\subsection{Reminder on Harish-Chandra Schwartz space}\label{section HCS}

Let $\Xi^G$ be the Harish-Chandra basic spherical function of $G$ (see \cite[\S II.1]{Wald1}, \cite[\S II.8.5]{Var}). It depends on the choice of a maximal compact subgroup $K$ of $G$ that we assume fixed from now on. The function $\Xi^G$ is $K$-biinvariant and we have \cite[Lemme II.1.3]{Wald1}, \cite[Proposition 16(iii) p.329]{Var}
\begin{align}\label{eq 1 HCS}
\displaystyle \int_K\Xi^G(g_1kg_2)dk=\Xi^G(g_1)\Xi^G(g_2)
\end{align}
for every $g_1,g_2\in G$ and where the Haar measure on $K$ is normalized to have total mass $1$.

Let $\sigma_G$ be a log-norm on $G$ (see \cite[\S 1.2]{BeuGGP}). We assume that $\sigma_G$ is bi-$K$-invariant and satisfies $\sigma_G(g^{-1})=\sigma_G(g)$. There exists $d_0>0$ such that (\cite[Lemme II.1.5, Proposition II.4.5]{Wald1}, \cite[Proposition 31 p.340, Theorem 23 p.360]{Var})
\begin{align}\label{eq 2 HCS}
\displaystyle \int_G \Xi^G(g)^2 \sigma_G(g)^{-d_0}dg<\infty
\end{align}
and
\begin{align}\label{eq 3 HCS}
\displaystyle \int_{N^-} \Xi^G(u^-)\sigma_G(u^-)^{-d_0}du^-<\infty.
\end{align}

Let $\cC(G)$ be the {\em Harish-Chandra Schwartz space} of $G$. It is the space of functions $f:G\to \mathbb{C}$ which are $C^\infty$ in the Archimedean case, biinvariant by a compact-open subgroup in the $p$-adic case, and satisfy inequalities
$$\displaystyle \lvert f(g)\rvert \ll_d \Xi^G(g) \sigma_G(g)^{-d},\;\; g\in G$$
for every $d>0$ in the $p$-adic case;
$$\displaystyle \lvert (R(X)L(Y)f)(g)\rvert \ll_{d,X,Y} \Xi^G(g) \sigma_G(g)^{-d},\;\; g\in G$$
for every $d>0$ and every $X,Y\in \cU(\fg)$ in the Archimedean case.

There is a natural topology on $\cC(G)$ making it into a Fr\'echet space in the Archimedean case and a strict LF space in the $p$-adic case \cite[\S 1.5]{BeuGGP}. The Harish-Chandra Schwartz space $\cC(G\times G)$ of $G\times G$ is defined similarly. We will need the following, probably well-known, result.



\begin{lem}\label{lem proj tensor prod}
Assume that $F$ is Archimedean. Then, there is a topological isomorphism $\cC(G)\widehat{\otimes}\cC(G)\simeq \cC(G\times G)$ sending a pure tensor $f_1\otimes f_2$ to the function $(g_1,g_2)\mapsto f_1(g_1)f_2(g_2)$.
\end{lem}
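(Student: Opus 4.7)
The plan is to treat this as an instance of the Schwartz kernel theorem adapted to the Harish-Chandra Schwartz space, splitting the argument into (a) continuity of the natural bilinear map and (b) upgrading to a topological isomorphism via nuclearity.

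First I would check that the map $m:\cC(G)\times\cC(G)\to \cC(G\times G)$, $(f_1,f_2)\mapsto \bigl((g_1,g_2)\mapsto f_1(g_1)f_2(g_2)\bigr)$, is jointly continuous. Three ingredients suffice: taking $K\times K$ as the maximal compact of $G\times G$, the averaging identity \eqref{eq 1 HCS} yields directly $\Xi^{G\times G}(g_1,g_2)=\Xi^G(g_1)\Xi^G(g_2)$; a bi-$(K\times K)$-invariant log-norm on $G\times G$ can be chosen so that $\sigma_{G\times G}(g_1,g_2)\asymp \sigma_G(g_1)+\sigma_G(g_2)$; and $\cU(\fg\oplus\fg)=\cU(\fg)\otimes \cU(\fg)$, so derivations on $G\times G$ factor on the two sides. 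Combining these, the defining seminorms of $\cC(G\times G)$ are bounded by products of defining seminorms of $\cC(G)$, and the universal property of the projective tensor product extends $m$ to a continuous linear injection $\Phi:\cC(G)\widehat{\otimes}\cC(G)\to \cC(G\times G)$.

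The main task is then to show $\Phi$ is a topological isomorphism. The key input is nuclearity of $\cC(G)$, which is a standard fact obtained by writing $\cC(G)$ as the projective limit of the Banach completions with respect to finite subfamilies of the defining seminorms and verifying nuclearity of the transition maps via elliptic regularity (Casimir-type operators producing relative compactness between successive seminorm balls). Once nuclearity is in hand, for any Fr\'echet space $E$ one has $\cC(G)\widehat{\otimes} E=\cC(G,E)$, the space of $E$-valued Schwartz functions on $G$; specializing to $E=\cC(G)$ and identifying $\cC(G,\cC(G))$ with $\cC(G\times G)$ via the seminorm estimates of the first step (now read in reverse) furnishes the inverse of $\Phi$. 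An alternative finish: since $C_c^\infty(G)\otimes C_c^\infty(G)$ is dense in $\cC(G\times G)$ (by truncation and mollification, plus density of $C_c^\infty(G)$ in $\cC(G)$) and visibly lies in the image of $\Phi$, density of the image together with the open mapping theorem for Fr\'echet spaces gives that $\Phi$ is a topological isomorphism.

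The hardest part will be articulating cleanly both the nuclearity of $\cC(G)$ and the identification $\cC(G\times G)\simeq \cC(G,\cC(G))$: both are standard in spirit but require some care to justify precisely without invoking substantial abstract functional-analytic machinery. A more self-contained alternative, avoiding nuclear-space abstractions, would use a $K$-type decomposition together with Harish-Chandra's wave-packet theorem to realize elements of $\cC(G\times G)$ as absolutely convergent series $\sum_n f_n^{(1)}\boxtimes f_n^{(2)}$ with $f_n^{(i)}$ rapidly decaying in $\cC(G)$, at the cost of heavier concrete analysis.
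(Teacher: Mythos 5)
Your primary route is sound and is essentially a cousin of the paper's argument: both rest on (a) continuity of the bilinear map $(f_1,f_2)\mapsto f_1\otimes f_2$ into $\cC(G\times G)$, hence a continuous map $\Phi:\cC(G)\widehat{\otimes}\cC(G)\to\cC(G\times G)$, and (b) nuclearity of $\cC(G)$ (which the paper simply quotes from Bernstein rather than re-deriving via elliptic regularity). Where you diverge is in how nuclearity is exploited. You propose the identification $\cC(G)\widehat{\otimes}E\simeq\cC(G,E)$ followed by $\cC(G,\cC(G))\simeq\cC(G\times G)$; the paper instead invokes Grothendieck's weak--strong principle, which for nuclear $\cC(G)$ says that $\Phi$ is injective with image exactly those $f:G\times G\to\bC$ such that all partial functions $g'\mapsto f(g,g')$ and all weak slices $g\mapsto\langle f(g,\cdot),T\rangle$, $T\in\cC(G)'$, lie in $\cC(G)$ --- a condition trivially verified by every $f\in\cC(G\times G)$ --- and then concludes by the open mapping theorem applied to the resulting \emph{bijection}. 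The paper's route avoids having to set up the vector-valued Schwartz space and the ``mixed seminorm'' identification, which is exactly the step you flag as delicate; on the other hand your route is more in the spirit of the classical kernel theorem. One small ordering point: injectivity of $\Phi$ is not a consequence of the universal property alone (you assert it at the end of your first step); it too comes out of nuclearity, as in the weak--strong principle.

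Your ``alternative finish'' has a genuine gap: a continuous linear map of Fr\'echet (or LF) spaces with dense image need not be surjective, and the open mapping theorem does not apply to it --- compare the inclusion $C_c^\infty(G)\hookrightarrow\cC(G)$, which has dense image but is far from onto. Density of $C_c^\infty(G)\otimes C_c^\infty(G)$ in $\cC(G\times G)$ therefore does not by itself show that $\Phi$ is surjective, and without surjectivity there is no isomorphism. You must either characterize the image (as the paper does via the weak--strong principle) or construct an explicit continuous inverse (as in your primary route); the density argument cannot replace either.
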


\begin{proof}
The bilinear map
$$\displaystyle \cC(G)\times \cC(G)\to \cC(G\times G)$$
$$\displaystyle (f_1,f_2)\mapsto \left( (g_1,g_2)\mapsto f_1(g_1)f_2(g_2)\right)$$
is continuous and therefore induces a continuous linear map
\begin{equation}\label{eq map completed tensor product}
\cC(G)\widehat{\otimes}\cC(G)\to \cC(G\times G).
\end{equation}
By \cite[end of Section 3.5]{Ber3}, $\cC(G)$ is nuclear. Hence, by Grothendieck's weak-strong principle \cite[th\'eor\`eme 13, Chap. II \S 3, n.3]{Gro}, the map \eqref{eq map completed tensor product} is injective with image the space of all functions $f: G\times G\to \bC$ satisfying the following condition:
\begin{equation*}
\mbox{For every }g\in G, T\in \cC(G)'\mbox{ the functions }g'\mapsto f(g,g')\mbox{ and }g\mapsto \langle f(g,.),T\rangle\mbox{ belong to }\cC(G).
\end{equation*}
But it is easy to see that every $f\in \cC(G\times G)$ satisfies this condition. Therefore, the linear map \eqref{eq map completed tensor product} is bijective and thus, by the open mapping theorem \cite[Theorem 17.1]{Tr}, a topological isomorphism.
\end{proof}

\begin{rem}
Assume that $F$ is non-Archimedean case. Let $J$ be a compact-open subgroup of $G$ and denote by $\cC(J\backslash G/J)$, $\cC(J\times J\backslash G\times G/ J\times J)$ the subspaces of $J$ and $J\times J$ biinvariant functions in $\cC(G)$ and $\cC(G\times G)$ respectively. We can show similarly the existence of a natural topological isomorphism $\cC(J\backslash G/J)\widehat{\otimes}\cC(J\backslash G/J)\simeq \cC(J\times J\backslash G\times G/ J\times J)$ but such isomorphism does no longer exist without fixing ``the level''. Indeed, there is a natural algebraic isomorphism $\cC(G)\widehat{\otimes}\cC(G)\simeq \cC(G\times G)$ which is however not topological. We refer the reader to \cite[Exemple 4, Chap. II \S 3 n.3 p.84]{Gro} for a detailed discussion of a similar issue for the projective tensor product $C_c^\infty(M)\widehat{\otimes}C_c^\infty(N)$ where $M$ and $N$ are infinitely differentiable real manifolds.
\end{rem}

We let $\cC^w(G)$ be the {\em weak Harish-Chandra Schwartz space} of $G$ that is the space of functions $f:G\to \mathbb{C}$ which are $C^\infty$ in the Archimedean case, biinvariant by a compact-open subgroup in the $p$-adic case, and for which there exists $d>0$ such that
$$\displaystyle \lvert f(g)\rvert \ll \Xi^G(g) \sigma_G(g)^{d},\;\; g\in G$$
in the $p$-adic case;
$$\displaystyle \lvert (R(X)L(Y)f)(g)\rvert \ll_{X,Y} \Xi^G(g) \sigma_G(g)^{d},\;\; g\in G$$
for every $X,Y\in \cU(\fg)$ in the Archimedean case. The space $\cC^w(G)$ is naturally equipped with a structure of LF space for which the subspace $C_c^\infty(G)$ is dense.

By \cite[Corollary 6.3.3]{SV}, \cite[Proposition 7.1.1]{BeuGGP}\footnote{Strictly speaking in {\it loc. cit.} only the case of unitary groups is treated but the arguments extend verbatim to the general case.} the linear form
$$\displaystyle f\in C_c^\infty(G)\mapsto \int_N f(u)\xi(u)du$$
extends continuously to $\cC^w(G)$. As in \cite[\S 7.1]{BeuGGP}, we denote by
$$\displaystyle f\in \cC^w(G)\mapsto \int_N^* f(u)\xi(u)du$$
this unique continuous extension that we will call the {\em $(N,\xi)$-regularized integral}. Let $\varphi\in C_c^\infty(T)$ and $f\in \cC^w(G)$. Define $\Ad(\varphi)f\in \cC^w(G)$ by
$$\displaystyle (\Ad(\varphi)f)(g)=\int_T \varphi(t) f(t^{-1}gt)dt,\;\;\; g\in G.$$
We also set
$$\displaystyle \widehat{\varphi}(u)=\int_T \varphi(t) \delta_B(t) \xi(tut^{-1})dt,\;\;\; u\in N.$$
Note that $\widehat{\varphi}$ is invariant by the derived subgroup $N'$ of $N$ and that it is ``rapidly decreasing'' (and even compactly supported in the non-Archimedean case) on $N/N'$ by usual properties of the Fourier transform. By the same argument as \cite[Lemma 7.1.2(ii)]{BeuGGP} we have
\begin{align}\label{eq 4 HCS}
\displaystyle \int_N^* (\Ad(\varphi)f)(u)\xi(u)du=\int_N f(u) \widehat{\varphi}(u)du
\end{align}
where the second integral is absolutely convergent. More precisely, for every $d>0$ we have
\begin{align}\label{eq 5 HCS}
\displaystyle \int_N \Xi^G(u) \sigma_G(u)^d \lvert \widehat{\varphi}(u)\rvert du<\infty.
\end{align}
Actually \eqref{eq 4 HCS} can be taken as a definition of the $(N,\xi)$-regularized integral since, by Dixmier-Malliavin \cite{DM}, any function of $\cC^w(G)$ is a finite sum of functions of the form $\Ad(\varphi)f$.

\subsection{Geometric expansion}\label{section Kuznetsov geom}

Let $f_1,f_2\in \cC(G)$. We set
$$\displaystyle K_{f_1,f_2}(x,y):=\int_G f_1(x^{-1}gy)\overline{f_2(g)}dg,\;\; x,y\in G.$$
Note that this expression is absolutely convergent by \eqref{eq 2 HCS}. More precisely, let $d_0>0$ be such that \eqref{eq 2 HCS} is satisfied. Then, from \eqref{eq 1 HCS}, \eqref{eq 2 HCS} and the inequality $\sigma_G(g_1g_2)\ll \sigma_G(g_1)\sigma_G(g_2)$ for every $g_1,g_2\in G$, it is easy to infer that
\begin{align}\label{eq 1 Kusgeom}
\displaystyle \lvert K_ {f_1,f_2}(x,y)\rvert\ll_d \Xi^G(x)\Xi^G(y)\sigma_G(x)^{-d}\sigma_G(y)^{d},\;\;\; x,y\in G,
\end{align}
for every $d>0$. Therefore by \eqref{eq 3 HCS} the expression
$$\displaystyle K_{f_1,f_2}^{N^-,\xi^-}(x):=\int_{N^-} K_{f_1,f_2}(u^-,x)\xi^-(u^-)^{-1}du^-$$
is absolutely convergent for any $x\in G$. We have
\begin{align}\label{eq 2 Kusgeom}
\displaystyle K_{f_1,f_2}^{N^-,\xi^-}\in \cC^w(G).
\end{align}
Indeed, in the $p$-adic case it is clear as $K_{f_1,f_2}^{N^-,\xi^-}$ is biinvariant by a compact-open subgroup and by \eqref{eq 1 Kusgeom} it satisfies
$$\displaystyle \lvert K_{f_1,f_2}^{N^-,\xi^-}(x)\rvert \ll \Xi^G(x)\sigma_G(x)^{d_0},\;\;\; x\in G$$
where $d_0$ is chosen such that the integral \eqref{eq 3 HCS} converges. In the Archimedean case, by differentiating under the integral sign (which is justified here by the absolute convergence of the resulting expression), we see that $K_{f_1,f_2}^{N^-,\xi^-}$ is $C^\infty$ and that
$$\displaystyle R(X)L(Y)K_{f_1,f_2}^{N^-,\xi^-}=K_{R(X)f_1,R(\overline{Y})f_2}^{N^-,\xi^-}$$
for every $X,Y\in \cU(\fg)$. Thus, by \eqref{eq 1 Kusgeom}, we have
$$\displaystyle \lvert R(X)L(Y)K_{f_1,f_2}^{N^-,\xi^-}(x)\rvert \ll_{X,Y} \Xi^G(x)\sigma_G(x)^{d_0},\;\;\; x\in G$$
for every $X,Y\in \cU(\fg)$ where $d_0$ is again chosen such that the integral \eqref{eq 3 HCS} converges. This proves the claim \eqref{eq 2 Kusgeom}.

By \eqref{eq 2 Kusgeom}, we can now define the following expression
\[\begin{aligned}
\displaystyle I(f_1,f_2) & :=\int^*_N K_{f_1,f_2}^{N^-,\xi^-}(u)\xi(u)du=\int^*_N \int_{N^-} K_{f_1,f_2}(u^-,u)\xi^-(u^-)^{-1}du^-\xi(u)du.
\end{aligned}\]

\begin{rem}
By being slightly more careful, we can show that $K_{f_1,f_2}^{N^-,\xi^-}\in \cC(G)$ so that the integral over $N$ above is actually absolutely convergent. However, the final expression is only convergent as an iterated double integral and we will not use this fact in the sequel.
\end{rem}

For $t\in T$ and $f\in \cC(G)$ we set
$$\displaystyle O(t,f)=\int_{N\times N^-} f(u^-tu) \xi(u)\xi^-(u^-)du^-du.$$

\begin{lem}\label{lem 1 Kusgeom}
The expression defining $O(t,f)$ is absolutely convergent locally uniformly in $t$ and $f$.
\end{lem}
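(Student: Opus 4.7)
The integrand of $O(t,f)$ has modulus $|f(u^-tu)|$, so the statement amounts to bounding
\[
J(t,f):=\int_{N\times N^-}|f(u^-tu)|\,du^-\,du
\]
locally uniformly in $(t,f)\in T\times\cC(G)$. By definition of $\cC(G)$, every $f$ satisfies $|f(g)|\leqslant \nu_d(f)\,\Xi^G(g)\,\sigma_G(g)^{-d}$ for every $d>0$ with $\nu_d$ a continuous seminorm on $\cC(G)$. Hence it suffices to show that for any compact $\Omega\subset T$ and some $d$ large enough,
\[
\sup_{t\in\Omega}\int_{N\times N^-}\Xi^G(u^-tu)\,\sigma_G(u^-tu)^{-d}\,du^-\,du<\infty.
\]

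My first step would be to strip off the dependence on $t$. Substituting $u=t^{-1}vt$ (with Jacobian $\delta_B(t)^{-1}$) converts $u^-tu$ into $u^-vt$. For $t\in\Omega$ the scalar $\delta_B(t)^{-1}$ is bounded, and right-translation by $t$ distorts $\Xi^G$ and $\sigma_G$ only by polynomial factors (in $\sigma_G$) uniformly in $t\in\Omega$: this follows from the sub-additivity $\sigma_G(xt)\asymp\sigma_G(x)+1$ and the classical matrix-coefficient estimate $|\langle\pi(g)v,w\rangle|\ll\Xi^G(g)(1+\sigma_G(g))^M$ applied to the spherical principal series. After absorbing such polynomial losses into $d$, the problem reduces to showing that
\[
\int_{N\times N^-}\Xi^G(u^-v)\,\sigma_G(u^-v)^{-d}\,du^-\,dv<\infty
\]
for $d$ sufficiently large.

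The main difficulty — and the principal obstacle — is then a factorization estimate of the form
\[
\Xi^G(u^-v)\,\sigma_G(u^-v)^{-d}\ \ll\ \Xi^G(u^-)\,\sigma_G(u^-)^{-d_0}\cdot\Xi^G(v)\,\sigma_G(v)^{-d_0},\qquad u^-\in N^-,\ v\in N,
\]
valid for $d$ large relative to the exponent $d_0$ of \eqref{eq 3 HCS}. Granted such a bound the double integral splits as a product of two finite integrals: \eqref{eq 3 HCS} and its analogue for $N$ (obtained via the measure-preserving conjugation $u\mapsto w^{-1}uw$, which intertwines the Haar measures and sends $\Xi^G\sigma_G^{-d_0}$ to itself). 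To prove the factorization I would combine the Cartan decomposition $g=k_1 a(g)k_2$ with Harish-Chandra's refined estimate $\Xi^G(g)\asymp\delta_B^{1/2}(a(g))(1+\sigma_G(g))^M$ on the antidominant chamber, using the Iwasawa decomposition on the open Bruhat cell to relate $a(u^-v)$ to $a(u^-)$ and $a(v)$. The two key geometric inputs are the multiplicative comparison $\delta_B^{1/2}(a(u^-v))\ll\delta_B^{1/2}(a(u^-))\,\delta_B^{1/2}(a(v))$ and the non-cancellation bound $\sigma_G(u^-v)\gtrsim\max(\sigma_G(u^-),\sigma_G(v))$ on the big cell; together they yield the factorization after applying Harish-Chandra's estimate.
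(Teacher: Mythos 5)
Your reduction of the problem to the finiteness of $\int_{N\times N^-}\Xi^G(u^-v)\sigma_G(u^-v)^{-d}\,du^-\,dv$ for $d$ large is correct and coincides with the first step of the paper's argument. The gap is in your main step: the pointwise factorization
\[
\Xi^G(u^-v)\,\sigma_G(u^-v)^{-d}\ \ll\ \Xi^G(u^-)\,\sigma_G(u^-)^{-d_0}\cdot\Xi^G(v)\,\sigma_G(v)^{-d_0}
\]
is \emph{false} once the rank is at least $2$, because the geometric input $\delta_B^{1/2}(a(u^-v))\ll\delta_B^{1/2}(a(u^-))\,\delta_B^{1/2}(a(v))$ --- which, with the antidominant normalization forced on you by the Harish-Chandra estimate $\Xi^G(g)\asymp\delta_B^{1/2}(a(g))(1+\sigma_G(g))^M$, is a \emph{lower} bound on the size of the Cartan projection of the product --- fails. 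Take $G=\GL_3(\bR)$, $u^-=1+TE_{32}\in N^-$ and $v=1+TE_{12}\in N$ with $T\to\infty$, where $E_{ij}$ denotes the elementary matrix. Since $E_{32}E_{12}=0$, the product $u^-v=1+TE_{32}+TE_{12}$ has singular values $\asymp(T,1,T^{-1})$, so $\Xi^G(u^-v)\asymp T^{-2}$ up to powers of $\log T$, whereas $u^-$ and $v$ each have singular values $\asymp(T,1,T^{-1})$ as well, so $\Xi^G(u^-)\Xi^G(v)\asymp T^{-4}$. The discrepancy $T^{2}$ cannot be absorbed into powers of $\sigma_G\asymp\log T$. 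The mechanism is that when $u^-$ and $v$ are supported on root subgroups that do not interact, the Cartan projection of $u^-v$ is far smaller than the ``product'' of the two Cartan projections; submultiplicativity of matrix norms gives the opposite inequality, which bounds $\Xi^G(u^-v)$ from \emph{below}. (Your other input, $\sigma_G(u^-v)\gtrsim\max(\sigma_G(u^-),\sigma_G(v))$, is correct --- the Gauss decomposition recovers the entries of $u^-$ and $v$ polynomially from those of $u^-v$ since the relevant principal minors equal $1$ --- but it cannot compensate for the failure above.)

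The integral does converge, but the proof must exploit the order of integration rather than a product bound. The paper writes $u^-=k_B(u^-)t_B(u^-)u_B(u^-)$ via the Iwasawa decomposition, uses the left $K$-invariance of $\Xi^G$ and $\sigma_G$ together with the invariance of the Haar measure on $N$ to rewrite the integrand as $\Xi^G(t_B(u^-)u)\sigma_G(t_B(u^-)u)^{-d}$, bounds the inner integral over $N$ for fixed $u^-$ by $\delta_B(t_B(u^-))^{-1/2}\sigma_G(t_B(u^-))^{-d'}$ for $d$ large relative to $d'$ (this is \cite[Proposition II.4.5]{Wald1} and its Archimedean analogue), and finally invokes the convergence of $\int_{N^-}\delta_B(t_B(u^-))^{-1/2}\sigma_G(t_B(u^-))^{-d'}du^-$ for $d'$ large, which is essentially \eqref{eq 3 HCS}. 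The passage through the inner $N$-integral at fixed $u^-$, which averages out exactly the configurations on which your pointwise bound fails, is the step your argument is missing.
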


\begin{proof}
After the change of variable $u\mapsto tut^{-1}$, we see that it suffices to show the existence of $d>0$ such that
$$\displaystyle \int_{N\times N^-} \Xi^G(u^-u)\sigma_G(u^-u)^{-d}du^-du<\infty.$$
By the Iwasawa decomposition, there exist functions $t_B:G\to T$, $u_B:G\to N$ and $k_B:G\to K$ such that $g=k_B(g)t_B(g)u_B(g)$ for every $g\in G$. As $\Xi^G$ and $\sigma_G$ are $K$-invariant, we have
\[\begin{aligned}
\displaystyle \int_{N\times N^-} \Xi^G(u^-u)\sigma_G(u^-u)^{-d}du^-du=\int_{N\times N^-} \Xi^G(t_B(u^-)u)\sigma_G(t_B(u^-)u)^{-d}du^-du.
\end{aligned}\]
By \cite[Proposition II.4.5]{Wald1} and \cite[Theorem 23 p.360]{Var} for any $d'>0$ we can choose $d$ such that the above expression is essentially bounded by
$$\displaystyle \int_{N^-} \delta_B(t_B(u^-))^{-1/2} \sigma_G(t_B(u^-))^{-d'}du^-.$$
Finally by \cite[Lemme II.3.4, Lemme II.4.2]{Wald1} and \cite[Theorem 4.5.4]{Wall} for $d'$ sufficiently large the last integral above converges. This proves the lemma.
\end{proof}

Set
$$\displaystyle I_{\geom}(f_1,f_2)=\int_T O(t,f_1)\overline{O(t,f_2)}\delta_B(t)dt.$$
The main result of this section is the following.

\begin{theo}\label{theo Kusgeom}
The expression defining $I_{\geom}(f_1,f_2)$ is absolutely convergent and moreover we have
$$\displaystyle I(f_1,f_2)=I_{\geom}(f_1,f_2).$$
\end{theo}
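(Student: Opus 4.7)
\emph{Strategy.} The plan is to unfold the kernel $K_{f_1,f_2}$ using the integration formula \eqref{int formula}, then perform two changes of variable in the unipotent integrations that decouple the $f_1$- and $f_2$-dependencies, and finally identify the resulting factors with $O(t,f_1)$ and $\overline{O(t,f_2)}$. Writing $K_{f_1,f_2}(u^-,u)=\int_G f_1((u^-)^{-1}gu)\overline{f_2(g)}dg$ and substituting $g=u_1^- t u_1$, one obtains a 5-fold integral; the substitutions $u^-\mapsto u_1^-(u^-)^{-1}$ in $N^-$ and $u\mapsto u_1^{-1}u$ in $N$ (together with the character identities $\xi^-(u_1^-(u^-)^{-1})^{-1}=\xi^-(u_1^-)^{-1}\xi^-(u^-)$ and $\xi(u_1^{-1}u)=\xi(u_1)^{-1}\xi(u)$) transform the integrand into the separated product
\[
f_1(u^- t u)\overline{f_2(u_1^- t u_1)}\delta_B(t)\cdot \xi^-(u^-)\xi(u)\cdot \xi^-(u_1^-)^{-1}\xi(u_1)^{-1}.
\]
A Fubini swap then identifies the $(u_1^-,u_1)$-integral with $\overline{O(t,f_2)}$ and the outer $(u^-,u)$-integral (with the $N$-variable still regularized) with $O(t,f_1)$, giving the desired equality.

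\emph{Absolute convergence of $I_{\geom}$.} For this first assertion I would derive, along the lines of the proof of Lemma \ref{lem 1 Kusgeom} but starting from the sharper estimate $|f(g)|\ll_d \Xi^G(g)\sigma_G(g)^{-d}$ valid on $\cC(G)$, a Bessel-type bound $|O(t,f)|\ll_d \delta_B(t)^{-1/2}\sigma_G(t)^{-d}$ for every $d>0$. Combined with $\int_T \sigma_G(t)^{-d'}dt<\infty$ for $d'$ large (a consequence of the estimates in \cite[Lemme II.3.4, Lemme II.4.2]{Wald1} or \cite[Theorem 4.5.4]{Wall}), this makes $|O(t,f_1)\overline{O(t,f_2)}|\delta_B(t)$ absolutely integrable over $T$.

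\emph{Justification of the formal manipulation.} The main obstacle is to legitimize the Fubini swaps and changes of variable in presence of the regularized outer integral $\int_N^*$. The key ingredient is the left-translation invariance
\[
\int_N^* F(vu)\xi(u)du = \xi(v)^{-1}\int_N^* F(u)\xi(u)du \qquad (F\in \cC^w(G),\ v\in N),
\]
inherited by continuity from the density of $C_c^\infty(G)$ in $\cC^w(G)$ (combined with the corresponding classical change-of-variable formula on the latter). Pulling the $u_1$-integration outside of $\int_N^*$ (justified by Step 1 and Lemma \ref{lem 1 Kusgeom}) and performing the substitution $u\mapsto u_1^{-1}u$ inside via this invariance, one arrives at an expression in which ordinary Fubini applies. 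Once the variables are separated, the inner integral $W_t^{f_1}(v):=\int_{N^-} f_1(u^- tv)\xi^-(u^-)du^-$ is integrable against $\xi(v)$ on $N$: indeed, by Lemma \ref{lem 1 Kusgeom} the double integral $\int_{N^-\times N}|f_1(u^- tv)|du^- dv$ is finite, so $\int_N^* W_t^{f_1}(v)\xi(v)dv$ coincides with the absolutely convergent $\int_N W_t^{f_1}(v)\xi(v)dv = O(t,f_1)$, and one concludes $I(f_1,f_2)=I_{\geom}(f_1,f_2)$.
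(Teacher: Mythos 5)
Your formal computation --- unfold via \eqref{int formula}, decouple the two test functions by translations in $N$ and $N^-$, and identify the resulting factors with $O(t,f_1)$ and $\overline{O(t,f_2)}$ --- is indeed the skeleton of the paper's argument, but both analytic pillars you lean on fail. The bound $\lvert O(t,f)\rvert\ll_d \delta_B(t)^{-1/2}\sigma_G(t)^{-d}$ cannot be obtained ``along the lines of Lemma \ref{lem 1 Kusgeom}'', i.e.\ by putting absolute values inside the integral: once the oscillation of $\xi\otimes\xi^-$ is discarded there is no decay in $t$ at all. Concretely, for $G=\mathrm{SL}_2(\Q_p)$ and $f=\mathbf{1}_{wK_0}$ with $K_0$ a small compact-open subgroup, one computes $\int_{N\times N^-}\lvert f(u^-tu)\rvert\,du^-du\asymp\delta_B(t)^{-1/2}$ for \emph{every} $t=\diag(a,a^{-1})$ with $\lvert a\rvert$ small, so the majorant this would feed into $I_{\geom}$ is $\int_{\lvert a\rvert\leqslant \epsilon}d^\times a=\infty$. (The true $O(t,f)$ does vanish for $\lvert a\rvert$ small, but only because of cancellation in the Kloosterman-type phase.) The same computation kills your final step: the ``ordinary Fubini'' you invoke after separating variables would require the absolute convergence of $\int_T\bigl(\int\lvert f_1(u^-tu)\rvert\bigr)\bigl(\int\lvert f_2(u_1^-tu_1)\rvert\bigr)\delta_B(t)\,dt$, which diverges. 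In addition, pulling the $(u_1^-,t,u_1)$-integration out of $\int_N^*$ is an interchange of a regularization with an integral and is not justified by Lemma \ref{lem 1 Kusgeom} or by any pointwise bound on orbital integrals.

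The paper's proof is built precisely to avoid these traps. Writing $f_1\otimes\overline{f_2}=R^\Delta(\varphi)F$ (Dixmier--Malliavin in the Archimedean case, bi-invariance under a compact-open subgroup of $T$ in the $p$-adic case) and applying \eqref{eq 4 HCS} replaces $\int_N^*(\cdot)\,\xi(u)du$ by the absolutely convergent integral of $K_F^{N^-,\xi^-}$ against the rapidly decreasing function $\widehat{\varphi}$; this is where the oscillation is actually exploited, via Fourier analysis on $T$. All subsequent manipulations are then exchanges of genuinely convergent integrals, and the absolute convergence of the geometric side is deduced not from a pointwise bound on $O(t,f)$ but from positivity of the integrand in \eqref{eq 4 Kusgeom} for $F=f\otimes\overline{f}$ and $\varphi\geqslant 0$ (claim \eqref{eq 6 Kusgeom}), followed by Cauchy--Schwarz. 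To salvage your route you would need rapid decay of $O(t,f)$ in $t$ via a genuine stationary-phase or cancellation argument, which is a much harder statement than anything contained in Lemma \ref{lem 1 Kusgeom}.
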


\begin{proof}
We extend the definition of $K_{f_1,f_2}$ to any $F\in \cC(G\times G)$ by
$$\displaystyle K_F(x,y)=\int_G F(x^{-1}gy,g)dg,\;\;\; x,y\in G.$$
We have $K_{f_1,f_2}=K_{f_1\otimes \overline{f_2}}$ where $f_1\otimes \overline{f_2}\in \cC(G\times G)$ is the function given by $(f_1\otimes \overline{f_2})(g_1,g_2)=f_1(g_1)\overline{f_2(g_2)}$. The same argument as before shows that
\begin{align}\label{eq 3 Kusgeom}
\displaystyle \displaystyle \lvert K_ {F}(x,y)\rvert\ll_{d,F} \Xi^G(x)\Xi^G(y)\sigma_G(x)^{-d}\sigma_G(y)^{d},\;\;\; x,y\in G
\end{align}
for any $d>0$ and $F\in \cC(G\times G)$. Therefore, we can define
$$\displaystyle K_F^{N^-,\xi^-}(x):=\int_{N^-} K_{F}(u^-,x)\xi^-(u^-)^{-1}du^-$$
for any $x\in G$ and $F\in \cC(G\times G)$ and by the same argument as for \eqref{eq 2 Kusgeom} we have $K_F^{N^-,\xi^-}\in \cC^w(G)$. Denote by $R^\Delta$ the right diagonal action of $T$ on $\cC(G\times G)$. 

In the $p$-adic case, we choose a compact-open subgroup $K_T$ of $T$ by which both $f_1$ and $f_2$ are right-invariant and we set $\varphi=\vol(K_T)^{-1}\mathbf{1}_{K_T}\in C_c^\infty(T)$, $F=f_1\otimes \overline{f_2}\in \cC(G\times G)$. Then, we have $f_1\otimes \overline{f_2}=R^\Delta(\varphi)F$. In the Archimedean case, by Dixmier-Malliavin \cite{DM}, $f_1\otimes \overline{f_2}$ is a finite sum of functions of the form $R^\Delta(\varphi)F$ where $\varphi\in C_c^\infty(T)$ and $F\in \cC(G\times G)$. For notational simplicity we will assume that $f_1\otimes \overline{f_2}=R^\Delta(\varphi)F$ for some functions $(\varphi,F)\in C_c^\infty(T)\times \cC(G\times G)$, the modifications needed to treat the general case are obvious.

In both cases, we have $K_{f_1,f_2}^{N^-,\xi^-}=K_{R^\Delta(\varphi)F}^{N^-,\xi^-}$ and a simple change of variable shows that $K_{R^\Delta(\varphi)F}^{N^-,\xi^-}=\Ad(\varphi)K_{F}^{N^-,\xi^-}$ (where the operator $\Ad(\varphi)$ was introduced in Section \ref{section HCS}). Hence, by \eqref{eq 4 HCS} we have
\[\begin{aligned}
\displaystyle I(f_1,f_2)=\int_N^* (\Ad(\varphi)K_{F}^{N^-,\xi^-})(u)\xi(u)du=\int_N K_{F}^{N^-,\xi^-}(u) \widehat{\varphi}(u)du
\end{aligned}\]
where the function $\widehat{\varphi}$ is defined as in Section \ref{section HCS}. Unfolding all the definitions, we arrive at the following equality:
\[\begin{aligned}
\displaystyle I(f_1,f_2)=\int_N \int_{N^-} \int_G F(u^-gu,g)dg \xi^-(u^-)du^-\widehat{\varphi}(u)du.
\end{aligned}\]
As follows readily from \eqref{eq 3 Kusgeom}, \eqref{eq 3 HCS} and \eqref{eq 5 HCS} this last expression is absolutely convergent. By \eqref{int formula}, we have
\[\begin{aligned}
\displaystyle I(f_1,f_2) & =\int_{N\times N^-} \int_{N^-\times T\times N}F(u^-v^-tvu,v^-tv)\delta_B(t)dvdtdv^-\xi^-(u^-)du^-\widehat{\varphi}(u)du \\
 & =\int_T \int_{N^2\times (N^-)^2} F(u^-tu,v^-tv)\xi^-(u^-)\xi^-(v^-)^{-1}\widehat{\varphi}(v^{-1}u)du^-dv^-dudv\delta_B(t)dt.
\end{aligned}\]
Set
$$\displaystyle O(t,F)=\int_{N^2\times (N^-)^2} F(u^-tu,v^-tv)\xi^-(u^-)\xi^-(v^-)^{-1} \xi(u)\xi(v)^{-1}du^-dv^-dudv$$
for every $t\in T$ and $F\in \cC(G\times G)$. By the same argument as for Lemma \ref{lem 1 Kusgeom}, this expression is absolutely convergent locally uniformly in $t$ and $F$. Note that
$$\displaystyle O(t,f_1\otimes \overline{f_2})=O(t,f_1)\overline{O(t,f_2)},\;\;\; t\in T.$$
We have (where all the manipulations are justified since $O(t,F)$ converges locally uniformly in $t$ and $F$)
\[\begin{aligned}
\displaystyle & \int_{N^2\times (N^-)^2} F(u^-tu,v^-tv)\xi^-(u^-)\xi^-(v^-)^{-1}\widehat{\varphi}(v^{-1}u)du^-dv^-dudv \\
 & =\int_{N^2\times (N^-)^2} F(u^-tu,v^-tv)\xi^-(u^-)\xi^-(v^-)^{-1}\int_T \varphi(a) \delta_B(a) \xi(av^{-1}ua^{-1})dadu^-dv^-dudv \\
 & =\int_T \varphi(a)\delta_B(a)^{-1}\int_{N^2\times (N^-)^2} F(u^-ta^{-1}ua,v^-ta^{-1}va)\xi^-(u^-)\xi^-(v^-)^{-1}\xi(u)\xi(v)^{-1}du^-dv^-dudvda \\
 & =\int_T \varphi(a)\delta_B(a)^{-1} O(a^{-1}t,R^\Delta(a)F)da.
\end{aligned}\]
Thus, the above computations show that the expression
\begin{align}\label{eq 4 Kusgeom}
\displaystyle \int_T\int_T \varphi(a)\delta_B(a)^{-1} O(a^{-1}t,R^\Delta(a)F)da\delta_B(t)dt
\end{align}
is convergent as an iterated integral for any $F\in \cC(G\times G)$ and $\varphi\in C_c^\infty(T)$ and moreover that
\begin{align}\label{eq 5 Kusgeom}
\displaystyle I(f_1,f_2)=\int_T\int_T \varphi(a)\delta_B(a)^{-1} O(a^{-1}t,R^\Delta(a)F)da\delta_B(t)dt
\end{align}
whenever $f_1\otimes \overline{f_2}=R^\Delta(\varphi)F$. We are now going to show that this last expression is absolutely convergent. In the $p$-adic case it is clear when $f_1=f_2$ as the integrand is nonnegative and the general case follows by Cauchy-Schwarz. In the Archimedean case, the argument is essentially the same but slightly less direct. We actually show the following:
\begin{num}
\item\label{eq 6 Kusgeom} The expression \eqref{eq 4 Kusgeom} converges absolutely for any $F\in \cC(G\times G)$ and $\varphi\in C_c^\infty(T)$. 
\end{num}

Let $\varphi\in C_c^\infty(T)$. As $\lvert \varphi\rvert$ is bounded by $\varphi'$ for some $\varphi'\in C_c^\infty(T)$, we may assume that $\varphi\geqslant 0$. Let $(T_n)_n$ be an increasing sequence of compact subsets of $T$ such that $T=\bigcup_n T_n$. It suffices to show that for every $\phi\in L^\infty(T\times T)$ the sequence of continuous linear forms
$$\displaystyle L_{n,\phi}:F\in \cC(G\times G)\mapsto \int_{T_n\times T_n} \phi(a,t)\varphi(a)O(a^{-1}t,R^\Delta(a)F)\delta_B(a^{-1}t)dadt$$
converges pointwise. By Lemma \ref{lem proj tensor prod} and \cite[(A.5.3)]{BeuGGP}, it suffices to show that for any $f_1,f_2\in \cC(G)$ the sequence $(L_{n,\phi}(f_1\otimes \overline{f_2}))_n$ converges for all $\phi\in L^\infty(T\times T)$ or what amounts to the same that the integral
$$\displaystyle \int_{T\times T} \varphi(a)O(a^{-1}t,R(a)f_1\otimes R(a)\overline{f_2})\delta_B(a^{-1}t)dadt=\int_{T\times T} \varphi(a)O(a^{-1}t,R(a)f_1)\overline{O(a^{-1}t,R(a)f_2)}\delta_B(a^{-1}t)dadt$$
is absolutely convergent. By Cauchy-Schwartz again, we just need to check that
$$\displaystyle \int_{T\times T} \varphi(a) \lvert O(a^{-1}t,R(a)f)\rvert^2 \delta_B(a^{-1}t)dadt<\infty$$
for every $f\in \cC(G)$. Letting $F=f\otimes \overline{f}$, we have $O(a^{-1}t,R^\Delta(a)F)=\lvert O(a^{-1}t,R(a)f)\rvert^2$.
Thus, for this particular choice of $F$ and $\varphi$ the integrand in \eqref{eq 4 Kusgeom} is nonnegative hence this expression, which is the same as above, is absolutely convergent. This proves the claim.

By \eqref{eq 5 Kusgeom} and \eqref{eq 6 Kusgeom}, we now have
\[\begin{aligned}
\displaystyle I(f_1,f_2) & =\int_T\varphi(a)\int_T O(a^{-1}t,R^\Delta(a)F)\delta_B(a^{-1}t)dtda=\int_T\varphi(a)\int_T O(t,R^\Delta(a)F)\delta_B(t)dtda \\
 & =\int_T\int_T \varphi(a) O(t,R^\Delta(a)F)da\delta_B(t)dt=\int_T O(t,R^\Delta(\varphi)F)\delta_B(t)dt \\
 & =\int_T O(t,f_1\otimes \overline{f_2})\delta_B(t)dt=I_{\geom}(f_1,f_2)
\end{aligned}\]
where all the above expressions are absolutely convergent. This proves the theorem.
\end{proof}

\subsection{Spectral expansion}\label{section Kusspec}

Let $\Temp(G)$ denote the set of isomorphism classes of irreducible tempered representations of $G$. This set carries a natural topology (see \cite[Section 2.6]{BeuPlanch}). Let $\pi\in \Temp(G)$. The representation $\pi$ is unitary and we fix an invariant scalar product $(.,.)$ on its space. Then, to every $f\in \cC(G)$ we can associate an operator $\pi(f)$ such that for $u,v$ smooth vectors in the space of $\pi$ we have
$$\displaystyle (\pi(f)u,v)=\int_G f(g)(\pi(g)u,v) vdg$$
where the integral converges absolutely. This operator is of trace-class (it is even of finite rank in the $p$-adic case) and the function
$$\displaystyle f_\pi:g\in G\mapsto \Tr(\pi(g^{-1})\pi(f))$$
belongs to $\cC^w(G)$ \cite[(2.2.5)]{BeuGGP}. According to Harish-Chandra \cite{H-C}, \cite{Wald1} (see also \cite{Ber3}) there exists a unique measure $d\mu_G(\pi)$ on $\Temp(G)$ such that
$$\displaystyle f(g)=\int_{\Temp(G)}f_\pi(g) d\mu_G(\pi)$$
for every $f\in \cC(G)$ and $g\in G$ where the right-hand side is an absolutely convergent integral.

For any $\pi\in \Temp(G)$ we define a {\em Bessel distribution} by
$$\displaystyle f\in \cC(G)\mapsto I_\pi(f):=\int_N^* f_\pi(w^{-1}u)\xi(u)du=\int_N^* \Tr(\pi(w)\pi(f)\pi(u^{-1})) \xi(u)du.$$

Let $f_1,f_2\in \cC(G)$. We set
$$\displaystyle I_{\spec}(f_1,f_2):=\int_{\Temp(G)} I_\pi(f_1)\overline{I_\pi(f_2)}d\mu_G(\pi).$$
The main result of this section is the following.

\begin{theo}\label{theo Kusspec}
The expression defining $I_{\spec}(f_1,f_2)$ is absolutely convergent and moreover we have
$$\displaystyle I(f_1,f_2)=I_{\spec}(f_1,f_2).$$
\end{theo}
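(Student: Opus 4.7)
The plan is to mirror the proof of Theorem~\ref{theo Kusgeom}, replacing the Iwasawa integration formula \eqref{int formula} by Harish-Chandra's Plancherel formula for the biregular action of $G\times G$ on $L^2(G)$. The starting point is the spectral expansion of the kernel,
\[
K_{f_1,f_2}(x,y) = \int_{\Temp(G)} \Tr\bigl(\pi(x)\pi(f_1)\pi(y)^{-1}\pi(f_2)^*\bigr)\, d\mu_G(\pi), \qquad x,y\in G,
\]
obtained by applying Plancherel to $L(x)R(y)f_1\in L^2(G)$ paired against $f_2$; here we use the standard description $L^2(G)\simeq \int^{\oplus}\mathrm{HS}(\mathcal{H}_\pi)\, d\mu_G(\pi)$ under which $f\mapsto \pi(f)$ and $L(x)R(y)$ acts as $A\mapsto \pi(x)A\pi(y)^{-1}$. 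Absolute convergence, uniformly on compact sets, follows from the trace-class property of the $\pi(f_i)$ together with the Hilbert-Schmidt Plancherel identity applied to $f_1$.

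Substituting $x=u^-$, $y=u$ into the definition of $I(f_1,f_2)$, three integrations are to be interchanged. The $N^-$-integration is absolutely convergent by the tempered matrix-coefficient estimates underlying \eqref{eq 3 HCS} and commutes with $d\mu_G(\pi)$ by Fubini-Tonelli. To commute the regularized $\xi$-integral $\int^*_N$ with $d\mu_G(\pi)$, I use the Dixmier-Malliavin trick exactly as in the geometric proof: writing $f_1\otimes\overline{f_2}=R^\Delta(\varphi)F$ (a single term in the $p$-adic case, a finite sum in the Archimedean case), formula \eqref{eq 4 HCS} rewrites $\int^*_N$ as a genuinely absolutely convergent integral against $\widehat{\varphi}$ (by \eqref{eq 5 HCS}), after which Fubini applies. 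One then obtains
\[
I(f_1,f_2) = \int_{\Temp(G)} J_\pi\, d\mu_G(\pi),
\]
where $J_\pi := \int^*_N\int_{N^-}\Tr\bigl(\pi(u^-)\pi(f_1)\pi(u^{-1})\pi(f_2)^*\bigr)\xi^-(u^-)^{-1}\xi(u)\,du^-\,du$.

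The remaining step, and the subtlest, is the factorization $J_\pi = I_\pi(f_1)\overline{I_\pi(f_2)}$. I would verify this by first cyclically rearranging the trace, then applying the measure-preserving change of variable $u^-=w^{-1}vw$ (which transforms $\xi^-$ into $\xi$) in the $N^-$-integral, and finally expanding in an orthonormal basis of $\mathcal{H}_\pi$ to decouple the two variables; each factor should then reconstitute, via the cyclic identity $\Tr(\pi(w)\pi(f)\pi(u^{-1})) = \Tr(\pi(u^{-1}w)\pi(f))$, a Bessel distribution of $f_1$ respectively $\overline{f_2}$. Absolute convergence of $\int_{\Temp(G)} J_\pi\, d\mu_G(\pi)$ is then obtained from Cauchy-Schwarz, reducing to the case $f_1=f_2$ where $J_\pi = |I_\pi(f_1)|^2\geq 0$, exactly as for the geometric side in \eqref{eq 6 Kusgeom}. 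The hardest point is this factorization, which requires a careful interplay between the regularized integral, the cyclic manipulation of the trace, and the $w$-conjugation interchanging $N$ and $N^-$.
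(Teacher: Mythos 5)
Your overall strategy (spectral expansion of the kernel via Plancherel, interchange of the three integrations, factorization of the resulting $\pi$-local term $J_\pi$) is the natural direct route, but it is not the one the paper takes: the paper first shows that both $(f_1,f_2)\mapsto I(f_1,f_2)$ and $(f_1,f_2)\mapsto I_{\spec}(f_1,f_2)$ are (separately) continuous sesquilinear forms on $\cC(G)$ (the convergence of $I_{\spec}$ coming from the continuity and decay of $\pi\mapsto I_\pi(f_i)$ established in \cite[Proposition 2.131]{BeuPlanch}), then reduces by density to the case where the operator-valued Fourier transform $\pi\mapsto\pi(f_1)$ is compactly supported, and finally quotes \cite[Lemma 7.2.2(v)]{BeuGGP} --- which is essentially the identity you are attempting to reprove from scratch.

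The genuine gap is the step you yourself flag as the hardest: the factorization $J_\pi=I_\pi(f_1)\overline{I_\pi(f_2)}$. Expanding the trace in an orthonormal basis does not ``decouple the two variables'' in a usable way: after the expansion, one of the integrals lands on individual matrix coefficients $\langle\pi(v)e_j,e_i\rangle$, whose integrals against $\xi^{-1}$ over $N$ are divergent and must themselves be regularized, and the interchange of the (infinite, in the Archimedean case) sum with the regularized integral is precisely the delicate analytic point. More importantly, even granting the decoupling, what you need is that the regularized pairing $(e,e')\mapsto\int_N^*\langle\pi(u)e,e'\rangle\xi(u)^{-1}du$ factors as $\lambda(e)\overline{\lambda(e')}$ for a Whittaker functional $\lambda$ normalized so that the resulting constant is exactly $1$ relative to \eqref{int formula} and the Plancherel measure; this is the analogue for a general quasi-split $G$ of \eqref{eq2 Fourier inversion}, whose proof for $\GL_n$ uses the mirabolic Fourier inversion \eqref{eq1 Fourier inversion} and is not available here. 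Cyclicity of the trace and the substitution $u^-=w^{-1}vw$ give the right shape but not this normalization; supplying it is the actual content of the cited lemma. (The Fubini interchange of $d\mu_G(\pi)$ with the $N^-$- and $\widehat{\varphi}$-integrals also requires an integrable bound on $\pi\mapsto\lVert\pi(f_1)\rVert_1$ over $\Temp(G)$, which you assert but do not check; for Harish-Chandra Schwartz functions this is available, so that part is routine.)
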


\begin{proof}
First we consider the convergence of $I_{\spec}(f_1,f_2)$. By \cite[Proposition 2.131]{BeuPlanch} the functions $\pi\in \Temp(G)\mapsto I_\pi(f_1)$ and $\pi\mapsto I_\pi(f_2)$ are continuous and compactly supported in the $p$-adic case whereas there are continuous and essentially bounded by $N(\pi)^{-k}$ for any $k>0$ in the Archimedean case where $N(.)$ is the ``norm'' on $\Temp(G)$ introduced in \cite[\S 2.6]{BeuPlanch}. Combining this with \cite[(2.7.4)]{BeuPlanch} we see that the integral defining $I_{\spec}(f_1,f_2)$ is absolutely convergent. Actually, using the full strength of \cite[Proposition 2.131]{BeuPlanch} we even have that $(f_1,f_2)\in \cC(G)^2\mapsto I_{\spec}(f_1,f_2)$ is a continuous sesquilinear form. By making the arguments for \eqref{eq 1 Kusgeom} and \eqref{eq 2 Kusgeom} effective, we have similarly that $(f_1,f_2)\in \cC(G)^2\mapsto I(f_1,f_2)$ is a (separately) continuous sesquilinear form. Therefore we just need to show the equality of the theorem for a dense subset of $\cC(G)$. In particular, we may assume that the operator-valued Fourier transform $\pi\in \Temp(G)\mapsto \pi(f_1)$ is compactly supported \cite[Theorem 2.6.1]{BeuGGP}. In this case the identity of the theorem is just a reformulation of \cite[Lemma 7.2.2(v)]{BeuGGP}\footnote{Once again only the case of unitary groups was considered in {\it loc. cit.} but the proof works equally well in the more general situation considered here.}.
\end{proof}

Combining Theorem \ref{theo Kusgeom} with Theorem \ref{theo Kusspec} we arrive at the following.

\begin{theo}[Local Kuznetsov trace formula]\label{theo local Kuznetsov}
For any $f_1,f_2\in \cC(G)$ we have
$$\displaystyle I_{\geom}(f_1,f_2)=I_{\spec}(f_1,f_2).$$
\end{theo}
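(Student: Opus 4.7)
The statement is an immediate corollary of the two preceding theorems, so the proof plan is essentially a one-line combination: both Theorem~\ref{theo Kusgeom} and Theorem~\ref{theo Kusspec} assert that the regularized double integral $I(f_1,f_2)$ equals $I_{\geom}(f_1,f_2)$ on one side and $I_{\spec}(f_1,f_2)$ on the other, so transitivity of equality yields $I_{\geom}(f_1,f_2)=I_{\spec}(f_1,f_2)$.

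The plan is therefore just to chain the two identities. First I would invoke Theorem~\ref{theo Kusgeom} to obtain the geometric expansion $I(f_1,f_2)=I_{\geom}(f_1,f_2)$, noting that by the same theorem the right-hand side is absolutely convergent and given concretely by $\int_T O(t,f_1)\overline{O(t,f_2)}\delta_B(t)dt$. Next I would invoke Theorem~\ref{theo Kusspec} to obtain the spectral expansion $I(f_1,f_2)=I_{\spec}(f_1,f_2)$, again with absolute convergence of $\int_{\Temp(G)} I_\pi(f_1)\overline{I_\pi(f_2)}d\mu_G(\pi)$. Equating the two expressions for $I(f_1,f_2)$ finishes the proof.

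There is no obstacle here: all the work has been done upstream. The geometric side was obtained via the regularization formula \eqref{eq 4 HCS} of Sakellaridis--Venkatesh / Lapid--Mao, a Dixmier--Malliavin decomposition of $f_1\otimes \overline{f_2}$, and an unfolding using the Iwasawa integration formula \eqref{int formula}; absolute convergence of the resulting double integral over $T\times T$ was secured by a Cauchy--Schwarz trick applied to the pointwise nonnegative case $F=f\otimes \overline{f}$. The spectral side reduces, via density and the continuity of both sides as sesquilinear forms on $\cC(G)^2$, to the case where the operator-valued Fourier transform $\pi\mapsto \pi(f_1)$ is compactly supported, where it becomes a direct consequence of Harish-Chandra's Plancherel formula together with \cite[Lemma 7.2.2(v)]{BeuGGP}. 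Thus the only ``content'' of Theorem~\ref{theo local Kuznetsov} beyond the two preceding theorems is the observation that both expansions make sense simultaneously, which is already recorded in their statements.
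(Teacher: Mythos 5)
Your proposal is correct and coincides with the paper's own argument: Theorem \ref{theo local Kuznetsov} is stated there as an immediate consequence of combining Theorem \ref{theo Kusgeom} with Theorem \ref{theo Kusspec}, both of which identify the respective expansions with the same regularized distribution $I(f_1,f_2)$. Nothing further is needed.
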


\begin{rem}
Although not transparent from the notation, both sides depend on the choice of $w$: this dependence is quite transparent for $I_{\spec}(f_1,f_2)$ from the definition whereas for $I_{\geom}(f_1,f_2)$ the dependence is hidden in the definition of $\xi^-$ (given at the beginning of this chapter).
\end{rem}

\subsection{A scalar Whittaker Paley-Wiener theorem}\label{section PW theorem}

In this subsection we assume that $F$ is a $p$-adic field. Let $\widehat{\cZ}(G)$ be the Bernstein center of $G$ \cite{Ber2}. Then $\widehat{\cZ}(G)$ is a direct product of integral domains indexed by the Bernstein components of $G$. We let $\cZ(G)$ be the corresponding direct sum. Let $\Cusp(G)$ be the set of pairs $(M,\sigma)$ where $M$ is a semi-standard Levi subgroup of $G$ and $\sigma$ is the isomorphism class of a supercuspidal representation of $M$. There is a natural action of the Weyl group $W=\Norm_G(T)/T$ on $\Cusp(G)$ and the maximal spectrum of $\cZ(G)$ is in natural bijection with the quotient $\Cusp(G)/W$.

A smooth representation $\pi$ of $G$ is said to be {\em $(N,\xi)$-generic} if $\Hom_N(\pi,\xi)\neq 0$. For $M$ a semi-standard Levi subgroup, we define similarly the notion of $(N^M,\xi^M)$-generic smooth representation of $M$ where $N^M=N\cap M$ and $\xi^M$ denotes the restriction of $\xi$ to $N^M$. We let $\Cusp_{\gen}(G)$ be the subset of $(M,\sigma)\in \Cusp(G)$ such that $\sigma$ is $(N^M,\xi^M)$-generic. It is known that a pair $(M,\sigma)\in \Cusp(G)$ belongs to $\Cusp_{\gen}(G)$ if and only if for one, or equivalently every, parabolic subgroup $P$ with Levi component $M$ the normalized smooth induction $I_P^G(\sigma)$ is $(\xi,N)$-generic in which case it contains an unique $(N,\xi)$-generic irreducible subquotient. Moreover, $\Cusp_{\gen}(G)$ is stable by the action of $W$ and $\Cusp_{\gen}(G)/W$ is a disjoint union of connected components of $\Cusp(G)/W$. We denote by $\cZ_{\gen}(G)$ the algebra of regular functions on $\Cusp_{\gen}(G)/W$ (thus, it is a direct factor of $\cZ(G)$).

Let $C^\infty(G)$ be the space of functions $G\to \mathbb{C}$ which are bi-invariant by some compact-open subgroup of $G$. It has a natural topology of LF space (for every compact-open subgroup $J$ we endow $C(J\backslash G/J)$ with the topology of pointwise convergence) for which the subspace $C_c^\infty(G)$ is dense. We will use the following very nice extension of \cite[Corollary 6.3.3]{SV} which is due to Lapid and Mao \cite[Proposition 2.11]{LM2}: the linear form
$$\displaystyle f\in C_c^\infty(G)\mapsto \int_N f(u)\xi(u)du$$
extends continuously to $C^\infty(G)$. As in Section \ref{section HCS}, we denote by
$$\displaystyle f\in C^\infty(G)\mapsto \int_N^* f(u)\xi(u)du$$
this unique continuous extension. Note that its restriction to $\cC^w(G)$ coincides with the $(N,\xi)$-regularized integral of Section \ref{section HCS} as the embedding $\cC^w(G)\subset C^\infty(G)$ is continuous.

Let $\Sm^{\fl}(G)$ be the category of smooth complex representations of $G$ which are of finite length. Let $\pi\in \Sm^{\fl}(G)$. To $f\in C_c^\infty(G)$ we associate the operator $\pi(f)$ such that for every vectors $v$, $v^\vee$ in the spaces of $\pi$ and $\pi^\vee$ (the smooth contragredient of $\pi$) we have
$$\displaystyle \langle \pi(f)v,v^\vee\rangle=\int_G f(g)\langle \pi(g)v,v^\vee\rangle dg.$$
This operator is of finite rank and the function $g\in G \mapsto \Tr(\pi(g)\pi(f))$ belongs to $C^\infty(G)$. We define the {\em Bessel distribution} $I_\pi$ by
$$\displaystyle I_\pi(f):=\int_N^* \Tr(\pi(w)\pi(f)\pi(u^{-1})) \xi(u)du,\;\;\; f\in C_c^\infty(G).$$
Obviously, when $\pi\in \Temp(G)$ this definition coincides with the restriction to $C_c^\infty(G)$ of the distribution defined in Section \ref{section Kusspec}. Note that $I_\pi$ only depends on the semi-simplification of $\pi$ (as it only depends on the distributional character of $\pi$). Thus, for $(M,\sigma)\in \Cusp_{\gen}(G)$ we can set $I_{M,\sigma}=I_{I_P^G(\sigma)}$ where $P$ is any parabolic subgroup with Levi component $M$.

Let $\cI_{\gen}(G)$ be the space of functions on $\Cusp_{\gen}(G)$ of the form $(M,\sigma)\mapsto I_{M,\sigma}(f)$ where $f\in C_c^\infty(G)$. The main result of this section is the following.

\begin{theo}\label{theo Whittaker-Paley-Wiener theorem}
We have
$$\displaystyle \cI_{\gen}(G)=\cZ_{\gen}(G).$$
\end{theo}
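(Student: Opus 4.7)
\textbf{Proof plan for Theorem \ref{theo Whittaker-Paley-Wiener theorem}.}

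The plan is to prove the two inclusions separately; both rely on analyzing the Bessel distribution $I_{M,\sigma}(f)$ in families via Whittaker models, together with the standard structure of Bernstein components.

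First, for the easier inclusion $\cI_{\gen}(G)\subseteq \cZ_{\gen}(G)$, I would verify three properties of the function $(M,\sigma)\mapsto I_{M,\sigma}(f)$ attached to any $f\in C_c^\infty(G)$: (a) $W$-invariance, which follows from the fact that $I_P^G(\sigma)$ and $I_{P'}^G(w\cdot\sigma)$ are isomorphic for $W$-conjugate data; (b) regularity on each connected component, which is parameterized by the unramified characters $\chi$ of a fixed Levi $M$ and for which I would express $I_{I_P^G(\sigma_0\otimes\chi)}(f)$ as a finite sum involving matrix coefficients of induced representations and Jacquet's Whittaker functionals (which depend polynomially on $\chi$, after a suitable choice of flat section); the integral over $N$ is regularized by Lapid--Mao, and the regularization also depends algebraically on $\chi$; (c) support in finitely many components, which follows from the fact that $\pi(f)=0$ for all $\pi$ outside a finite list of Bernstein components determined by a biinvariance level of $f$.

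The harder inclusion $\cZ_{\gen}(G)\subseteq \cI_{\gen}(G)$ is the heart of the theorem, and I would attack it one component at a time using the Bernstein center. Fix a Bernstein component $\Omega\subset \Cusp_{\gen}(G)/W$. I would split the problem into two steps: (i) construct a single $f_0\in C_c^\infty(G)$ such that $(M,\sigma)\mapsto I_{M,\sigma}(f_0)$ is the constant function $1$ on $\Omega$ and vanishes on all other components; (ii) given such $f_0$, obtain any $\varphi\in \cZ_{\gen}(G)$ by convolving with a suitable element of the Bernstein center. For step (ii), recall that every $z\in \cZ(G)$ acts on a smooth representation $\pi$ by a scalar $z(\pi)$, and if we choose a compact-open $J$ by which $f_0$ is biinvariant, then the action of $z$ on the unital subalgebra $C^\infty_c(J\backslash G/J)$ is by convolution with an element of that algebra; thus $z*f_0\in C_c^\infty(G)$ and
\[
I_{M,\sigma}(z*f_0)=z(I_P^G(\sigma))\,I_{M,\sigma}(f_0),
\]
so on $\Omega$ we recover exactly $z(\pi)$, realizing any element of $\cZ_{\gen}(G)|_\Omega$.

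The main obstacle is step (i), namely the construction of $f_0$. I would do this by fixing a parabolic $P=MN_P$ and a base point $\sigma_0$ in $\Omega$, pick a nonzero Whittaker functional $\lambda_0$ on $\sigma_0$, and use Jacquet's integral (in the Lapid--Mao regularized form)
\[
\lambda_\sigma(v)=\int_{N_P^-}^{*}\lambda_\sigma^M(v(u^-))\,\xi^{-}(u^-)^{-1}\,du^-
\]
to obtain a compatible family of Whittaker functionals $\lambda_\sigma$ on $I_P^G(\sigma)$ across $\sigma\in\Omega$. Using the standard expression of $I_\pi(f)$ as a bilinear pairing of the operator $\pi(f)$ against $\lambda_\pi$ and a corresponding Whittaker functional on $\pi^\vee$, the problem of making $I_\pi(f_0)\equiv 1$ on $\Omega$ reduces to producing $f_0$ whose image under the $(N,\xi)$-Whittaker--Plancherel transform is a specified idempotent on the generic part of $\Omega$. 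I would construct $f_0$ in the spirit of Bernstein's asymptotic/idempotent arguments, starting from a pseudo-coefficient or a suitable matrix coefficient attached to an essential Whittaker vector on $\Omega$, and use the unicity of the Whittaker model on each fiber to verify that the result is constant on $\Omega$; to achieve vanishing off $\Omega$, I would truncate using a Bernstein idempotent of $\Omega$. Checking that the normalization really produces the constant $1$ is the delicate step, and will use both the Lapid--Mao regularization property (ensuring the integral behaves formally like a compact integration) and the multiplicity one theorem for Whittaker models.
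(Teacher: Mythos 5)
Your first inclusion $\cI_{\gen}(G)\subseteq \cZ_{\gen}(G)$ and your step (ii) are exactly what the paper does: the regularity/$W$-invariance/finite-support properties come from the Jacquet functionals together with the Lapid--Mao regularization (\cite[Proposition 2.8]{LM2}), and the convolution identity $I_{M,\sigma}(z\ast f)=z([M,\sigma])\,I_{M,\sigma}(f)$ shows that $\cI_{\gen}(G)$ is an ideal of $\cZ_{\gen}(G)$. Where you diverge is in how you propose to finish. Your step (i) --- constructing a single $f_0$ with $I_{M,\sigma}(f_0)\equiv 1$ on a component $\Omega$ and $0$ elsewhere --- is a statement essentially equivalent to the surjectivity you are trying to prove (the constant $1$ on $\Omega$ is an element of $\cZ_{\gen}(G)$), and your sketch of it (``in the spirit of Bernstein's asymptotic/idempotent arguments,'' with the normalization acknowledged as the delicate unresolved point) does not amount to a proof. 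As written, this is a genuine gap: the hardest step of your argument is precisely the one you have not carried out.

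The gap is, however, avoidable, and this is the point of the paper's argument. Once you know that $\cI_{\gen}(G)$ is an ideal of $\cZ_{\gen}(G)$, you do not need a function realizing the constant $1$; you only need, for each $(M,\sigma)\in \Cusp_{\gen}(G)$, \emph{some} $f$ (allowed to depend on $(M,\sigma)$) with $I_{M,\sigma}(f)\neq 0$. This pointwise nonvanishing is \cite[Proposition 2.10]{LM2} and ultimately just reflects the nonvanishing of the Whittaker functional on $I_P^G(\sigma)$ for $(M,\sigma)$ generic. Then, on each component $\Omega$, the ideal $e_\Omega\,\cI_{\gen}(G)$ of the finitely generated $\bC$-algebra $\bC[\Omega]$ is contained in no maximal ideal, hence equals $\bC[\Omega]$ by the Nullstellensatz, and summing over components gives $\cI_{\gen}(G)=\cZ_{\gen}(G)$. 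In short: replace your step (i) by pointwise nonvanishing plus this elementary commutative algebra, and your outline becomes the paper's proof.
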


\begin{proof}
The inclusion $\cI_{\gen}(G)\subset \cZ_{\gen}(G)$ follows from \cite[Proposition 2.8]{LM2} and usual properties of the Jacquet functionals. Moreover, the action of the Bernstein center on $C_c^\infty(G)$ shows that $\cI_{\gen}(G)$ is an ideal of $\cZ_{\gen}(G)$. On the other hand, for any $(M,\sigma)\in \Cusp_{\gen}(G)$ the functional $I_{M,\sigma}$ is nonzero by \cite[Proposition 2.10]{LM2}. Hence, $\cI_{\gen}(G)$ is an ideal of $\cZ_{\gen}(G)$ which is not contained in any maximal ideal so that finally $\cI_{\gen}(G)=\cZ_{\gen}(G)$.
\end{proof}

\section{The symmetric space $X$ and FLO invariant functionals}\label{chapter invt funct}

\subsection{Groups and normalization of measures}\label{section groups, measures and the symmetric space X}

In this chapter we let $E/F$ be a quadratic extension of local fields of characteristic zero. We denote by $\Tra_{E/F}:E\to F$ the trace map and by $\eta$ be the quadratic character of $F^\times$ associated to this extension. We also fix a non-trivial unitary additive character $\psi':F\to \mathbb{S}^1$ and we let $\psi=\psi'\circ \Tra_{E/F}$.

Let $n\geqslant 1$. We set $G=\GL_n(E)$ and $G'=\GL_n(F)$. Let $T_n$, $N_n$ and $B_n$ be the algebraic subgroups of diagonal, unipotent upper triangular and upper triangular matrices of $\GL_n$ respectively. We set $T=T_n(E)$, $T'=T_n(F)$, $N=N_n(E)$, $N'=N_n(F)$, $B=B_n(E)$, $B'=B_n(F)$ and we denote by $\delta_B$, $\delta_{B'}$ the modular characters of $B$ and $B'$ respectively.

We denote by $c$ the non-trivial Galois automorphism of $E$ over $F$ and by $g\mapsto g^c$ the natural extension of $c$ to $G$. For $g\in G$, we also write ${}^t g$ for the transpose of $g$.

Using $\psi'$ and $\psi$ we define in the usual way non-degenerate characters $\psi'_n$ and $\psi_n$ of $N'$ and $N$ respectively: for every $u=(u_{i,j})_{1\leqslant i,j\leqslant n}\in N'$ we have
$$\displaystyle \psi'_n(u)=\psi'(\sum_{i=1}^{n-1}u_{i,i+1})$$
and similarly for $\psi_n$. Set
$$\displaystyle w=\begin{pmatrix} & & 1 \\ & \iddots & \\ 1 & &\end{pmatrix}.$$
Then we have $\psi'_n(wu^-w^{-1})=\psi_n'({}^t u^-)$ for every $u^-\in {}^t N'$.

We denote by $\Irr^{\gen}(G)\subseteq \Irr(G)$ (resp. $\Irr^{\gen}(G')\subseteq \Irr(G')$) the subset of generic irreducible representations and for $\pi\in \Irr^{\gen}(G)$ (resp. $\sigma\in \Irr^{\gen}(G')$) by $\cW(\pi,\psi_n)$ (resp. $\cW(\sigma,\psi'_n)$) the corresponding Whittaker model.

We equip $N'$, $T'$ and $G'$ with Haar measures such that the following integration formula
$$\displaystyle \int_{G'} f(g)dg=\int_{N'\times T'\times N'}f({}^t u_1tu_2)\delta_{B'}(t)du_1dtdu_2$$
is valid for every $f\in L^1(G')$.

Let $P_n$ be the mirabolic subgroup of $\GL_n$ (i.e. the subgroup of matrices with last row $(0,\ldots,0,1)$) and set $P=P_n(E)$, $P'=P_n(F)$. We equip $P$ (resp. $P'$) with a right Haar measure normalized such that setting
$$\displaystyle W_f(g_1,g_2)=\int_N f(g_1^{-1}ug_2)\psi_n(u)^{-1}du,\;\;\; g_1,g_2\in G$$
$$(\mbox{resp. } W_{f'}(g_1,g_2)=\int_{N'} f'(g_1^{-1}ug_2)\psi'_n(u)^{-1}du,\;\;\; g_1,g_2\in G'),$$
we have the Fourier inversion formulas
\begin{align}\label{eq1 Fourier inversion}
\displaystyle f(1)=\int_{N\backslash P} W_f(p,p)dp\; (\mbox{resp. } f'(1)=\int_{N'\backslash P'} W_{f'}(p,p)dp)
\end{align}
for every $f\in C_c^\infty(G)$ (resp. $f'\in C_c^\infty(G')$) see \cite[Lemma 4.4]{LM2}. Actually, the definition of $W_f$ and $W_{f'}$ extend to any $f\in \cC^w(G)$ and $f'\in \cC^w(G')$ by replacing the integrals over $N$ and $N'$ by the {\em regularized} one introduced in Section \ref{section HCS}. Then, the right-hand side of \eqref{eq1 Fourier inversion} is still absolutely convergent (this follows from \cite[Lemma 2.14.1 and Lemma 2.15.1]{BeuPlanch} in the degenerate case $E=F\times F$) and defines a continuous linear form on $\cC^w(G)$ or $\cC^w(G')$. Therefore, by density of $C_c^\infty(G)$ or $C_c^\infty(G')$ in $\cC^w(G)$ or $\cC^w(G')$, the inversion formula \eqref{eq1 Fourier inversion} continues to hold for every $f\in \cC^w(G)$ and $f'\in \cC^w(G')$.

For every $\sigma\in \Temp(G')$, the expression
\begin{align}\label{def pairing Whittaker}
\displaystyle \langle W,W'\rangle_{\Whitt}=\int_{N'\backslash P'} W(p)\overline{W'(p)}dp,\;\; W,W'\in \cW(\sigma,\psi'_n),
\end{align}
is absolutely convergent and defines a nonzero $G'$-invariant inner product on $\cW(\sigma,\psi'_n)$ by \cite{Ber1}, \cite{Bar}. This pairing allows to identify $\overline{\cW(\sigma,\psi'_n)}=\cW(\sigma^\vee,{\psi'_n}^{-1})$ with the smooth contragredient of $\cW(\sigma,\psi'_n)$. With our normalization of Haar measures, we have
\begin{align}\label{eq2 Fourier inversion}
\displaystyle \int_{N'}^* \langle R(u)W,W'\rangle_{\Whitt}\psi'_n(u)^{-1}du=W(1)\overline{W'(1)}
\end{align}
for every $\sigma\in \Temp(G')$ and $W,W'\in \cW(\sigma,\psi'_n)$ where the above regularized integral is taken in the sense of Section \ref{section HCS}. Indeed, the function $f(g)=\langle R(g)W,W'\rangle_{\Whitt}$, being a smooth matrix coefficient of a tempered representation, belongs to $\cC^w(G')$ and by unicity of the Whittaker model, there exists a constant $c$ (independent of $W$ and $W'$) such that $W_f(g_1,g_2)=cW(g_1)\overline{W'(g_2)}$. Applying the inversion formula \eqref{eq1 Fourier inversion}, we get
\[\begin{aligned}
\displaystyle \langle W,W'\rangle_{\Whitt}=f(1)=c\int_{N'\backslash P'} W(p)\overline{W'(p)}dp=c \langle W,W'\rangle_{\Whitt}.
\end{aligned}\]
As this is true for every $W,W'\in \cW(\sigma,\psi'_n)$, this shows that $c=1$ and the claim \eqref{eq2 Fourier inversion} is proved. Of course, a similar formula is valid for $G$.

\subsection{The symmetric space $X$}\label{S:symm X}

Let $h_V:E^n \times E^n\to E$ be a nondegenerate Hermitian form (our convention is that Hermitian forms are always linear in the first variable and antilinear in the second one). We denote by $V=(E^n,h_V)$ the associated Hermitian space and by $U(V)\subseteq G$ the corresponding unitary group defined by
$$\displaystyle U(V)=\{g\in G\mid h_V(gv,gv')=h_V(v,v')\; \forall v,v'\in E^n \}.$$
We also set $X_V=U(V)\backslash G$. Let $\mathcal{V}$ be a set of representatives of the isomorphism classes of Hermitian spaces of dimension $n$ over $E$ with underlying space $E^n$ (this set is finite and has two elements if $F$ is $p$-adic, $n+1$ if $F=\mathbb{R}$). \begin{equation}\label{def X}
\displaystyle X=\bigsqcup_{V\in \mathcal{V}} X_V.
\end{equation}

Let
$$\displaystyle \Herm_n^*=\{h\in G\mid {}^th^c=h \}$$
be the variety of invertible Hermitian matrices of size $n$. For each $V\in \cV$ we identify $h_V$ with the unique element of $\Herm_n^*$ such that
$$\displaystyle h_V(v,v')={}^t{v'}^c h_V v,\;\;\; v,v'\in E^n.$$
Then, there is an isomorphism $X\simeq \Herm_n^*$ given by
$$\displaystyle x\in X_V\mapsto {}^tx^c h_V x,\;\;\; V\in \cV.$$
This isomorphism sends the action by right translations of $G$ on $X$ to the right action of $G$ on $\Herm_n^*$ given by $h\cdot g={}^tg^c hg$. Besides, $\Herm_n^*$ admits a commuting left $F^\times$-action simply given by scalar multiplication. We denote by $(\lambda,x)\in F^\times\times X\mapsto \lambda x$ the corresponding action on $X$. Note that, when $n$ is odd or $F=\bR$, this extra action permutes certain components of the decomposition \eqref{def X}.

Note that $T'\subseteq \Herm_n^*$. We let $T_X$ be the subvariety of $X$ corresponding to $T'$ by the above isomorphism and we endow this set with the image of the Haar measure that we have fixed on $T'$. We also denote by $\delta_X$ the composition of the isomorphism $T_X\simeq T'$ with the modular character $\delta_{B'}$. Note that $T_X$ is invariant by translation by $T$ and consists of finitely many $T$-orbits. We equip $N$ with a Haar measure and $X$ with a $G$-invariant measure such that the following integration formula
\begin{align}\label{int formula X}
\displaystyle \int_X \varphi(x)dx=\int_N \int_{T_X} \varphi(tu)\delta_X(t)dtdu
\end{align}
is valid for every $\varphi\in L^1(X)$.

Whenever convergent, we denote by
$$\displaystyle \langle \varphi,\varphi'\rangle_X=\int_X \varphi(x)\overline{\varphi'(x)}dx$$
the $L^2$-inner product of two functions $\varphi,\varphi'\in C^\infty(X)$.

By \cite[Corollary 1.2]{GO}, for every $V\in \cV$ the pair $(G,U(V))$ is {\em tempered} in the sense of \cite[\S 2.7]{BeuGalP} that is:
\begin{equation}\label{X tempered}
\displaystyle\mbox{There exists }d>0\mbox{ such that the integral }\int_{U(V)} \Xi^G(h)\sigma_G(h)^{-d}dh\mbox{ is convergent.}
\end{equation}
As in the proof of \cite[Proposition 1.7.1]{BeuGalP}, this implies the following:
\begin{num}
\item\label{eq2 X tempered} For every $\varphi,\varphi'\in C_c^\infty(X)$ the function
$$\displaystyle g\in G\mapsto \langle R(g)\varphi,\varphi'\rangle_X$$
belongs to $\cC^w(G)$.
\end{num}

\subsection{Jacquet-Ye's transfer}\label{section JY transfer}

For $\varphi\in C_c^\infty(X)$, $f'\in C_c^\infty(G')$, $t\in T_X$ and $a\in T'$ we define the orbital integrals
$$\displaystyle O(t,\varphi)=\int_N \varphi(tu)\psi_n(u)du \;\; \mbox{ and } \;\; O(a,f')=\int_{N'\times N'} f'({}^tu_1a u_2) \psi'_n(u_1u_2)du_1du_2.$$
Note that these integrals are absolutely convergent as the integrand are compactly supported. For every $a\in T'$, we set
$$\displaystyle \gamma(a):=\prod_{k=1}^{n-1}\eta(a_k)^k$$
where $a_1,\ldots,a_n$ denote the diagonal entries of $a$. We say that the functions $\varphi\in C_c^\infty(X)$ and $f'\in C_c^\infty(G')$ {\em match} and we will write $\varphi\leftrightarrow f'$ if
$$\displaystyle \gamma(a)O(a,f')=O(t,\varphi)$$
whenever $t\in T_X$ maps to $a\in T'$ via the isomorphism $T_X\simeq T'$.

The following theorem is due to Jacquet \cite{Jac} (in the $p$-adic case) and Aizenbud-Gourevitch \cite{AG} (in the Archimedean case).
\begin{theo}[Jacquet, Aizenbud-Gourevitch]\label{theo JY transfer}
Every $\varphi\in C_c^\infty(X)$ matches a function $f'\in C_c^\infty(G')$. Conversely, every $f'\in C_c^\infty(G')$ matches a function $\varphi\in C_c^\infty(X)$.
\end{theo}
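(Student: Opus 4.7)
The plan is to follow Jacquet's strategy in the $p$-adic case (and Aizenbud--Gourevitch's techniques in the Archimedean case), proceeding by induction on $n$ combined with parabolic descent near non-regular elements of $T'$.

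First, I would reformulate the statement as an equality of function spaces on $T'$. Define
\begin{align*}
\mathcal{I}(X) &= \{ a \mapsto O(t_a,\varphi) : \varphi \in C_c^\infty(X) \}, \\
\mathcal{I}(G') &= \{ a \mapsto \gamma(a) O(a,f') : f' \in C_c^\infty(G') \},
\end{align*}
where $t_a \in T_X$ corresponds to $a \in T'$; the theorem then becomes $\mathcal{I}(X) = \mathcal{I}(G')$. A preliminary analysis using compact support of $\varphi$ and $f'$ together with an Iwasawa-type decomposition shows that both spaces sit inside a common ambient space of locally constant (resp.\ $C^\infty$) functions on the regular locus $T'_{\mathrm{reg}}$, with controlled support and growth.

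Second, I would perform parabolic descent. Fix a semisimple $a_0 \in T'$ whose centralizer is a Levi $M_0 = \prod_i \GL_{n_i}(F) \subset G'$. For $a \in T'$ close to $a_0$, both orbital integrals admit germ expansions in terms of analogous orbital integrals on $M_0$ on the $G'$-side and on the smaller symmetric spaces $U(V_i) \backslash \GL_{n_i}(E)$ on the $X$-side; the factor $\gamma(a)$ accounts exactly for the transfer factors arising in the descent. By induction on $n$, matching is known for these smaller instances, which gives local matching near each $a_0$. A partition-of-unity argument on $T'$, compatible with the action of $T$ on $T_X$, then globalizes to matching on all of $T'$.

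The main obstacle is the surjectivity built into the statement: to transfer a given $\varphi$ (or $f'$), one must construct a compactly supported function on the opposite side whose orbital integrals realize the target function on $T'$. In the $p$-adic case, this can be handled by reducing via a partial Cayley transform to an analogous (and easier) transfer on the tangent Lie algebras, settled by explicit Fourier analysis and Harish-Chandra descent, and then using the inductive germ expansions to correct away the low-rank contributions. In the Archimedean case, Aizenbud--Gourevitch instead work on the level of invariant distributions, showing that any distribution on $T'$ annihilating $\mathcal{I}(X)$ annihilates $\mathcal{I}(G')$ and conversely, by exploiting holonomicity of the associated $\mathcal{D}$-modules and a Gelfand--Kazhdan style argument. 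In both cases, the delicate point is controlling the behavior of orbital integrals near diagonal matrices with repeated eigenvalues, where the descent is most intricate.
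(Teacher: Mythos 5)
The first thing to say is that the paper does not prove Theorem \ref{theo JY transfer} at all: it is imported as a black box from Jacquet \cite{Jac} in the $p$-adic case and Aizenbud--Gourevitch \cite{AG} in the Archimedean case, so there is no in-paper argument to compare yours against. Your outline does correctly identify the broad architecture of those external proofs (reformulation as an equality of spaces of functions on $T'_{\mathrm{reg}}$, Harish-Chandra/parabolic descent near non-regular semisimple elements, induction on $n$, linearization to the tangent space, and, in the Archimedean case, arguments about invariant distributions). But as a proof it has a genuine gap precisely at the point you flag as ``the main obstacle'': the surjectivity is the entire content of the theorem, and it is not ``settled by explicit Fourier analysis and Harish-Chandra descent.'' The engine of Jacquet's argument on the linearized problem (functions on $\mathfrak{gl}_n(F)$ versus functions on the space of Hermitian matrices) is the statement that the set of \emph{matching pairs} of functions is stable under Fourier transform up to an explicit constant; this stability is itself a deep input, resting on the Jacquet--Ye Kloosterman identities (the fundamental lemma for this comparison and its extensions), and it is only by combining Fourier-transform stability with the descent and an intricate density argument that one produces, for a given $\varphi$, a compactly supported $f'$ with the prescribed orbital integrals. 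Your sketch never explains where the matching function on the other side comes from; descent and germ expansions by themselves only tell you what the candidate orbital integrals must look like near singular points, not that they are realized by a function in $C_c^\infty$.

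A secondary inaccuracy: in the Archimedean case \cite{AG} do not argue by showing that distributions on $T'$ annihilating one space of orbital integrals annihilate the other; they adapt Jacquet's Fourier-transform strategy to the Archimedean setting, with the new difficulties being analytic (Schwartz-space versions of the descent, continuity of the orbital integral maps, and vanishing results for equivariant distributions proved via wave-front set and D-module techniques). If you intend this as a genuine proof rather than a reading guide to \cite{Jac} and \cite{AG}, you would need to state and prove (or at least precisely cite) the Fourier-transform compatibility of transfer and the relevant fundamental lemma, and explain how the inductive step actually constructs the transferred function.
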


\subsection{Feigon-Lapid-Offen's functionals}\label{section FLO funct}

Let $\pi\in\Temp(G)$. We denote by $\mathcal{E}_G(X,\cW(\pi,\psi_n)^*)$ the set of all maps
$$\displaystyle \alpha: X\times \cW(\pi,\psi_n)\to \mathbb{C}$$
which are $G$-invariant for the diagonal $G$-action i.e. satisfying $\alpha(xg,R(g^{-1})W)=\alpha(x,W)$ for every $x\in X$, $W \in \cW(\pi, \psi_n)$ and $g\in G$, and such that $W\in \cW(\pi,\psi_n)\mapsto \alpha(x,W)$ is a continuous linear functional for every $x\in X$ (the continuity condition is only for the Archimedean case). Let $x_1, \ldots,x_k$ be a family of representatives for the $G$-orbits in $X$, then we have an isomorphism
$$\displaystyle \mathcal{E}_G(X,\cW(\pi,\psi_n)^*)\simeq  \bigoplus_{i=1}^k  \Hom_{G_{x_i}}(\cW(\pi,\psi_n),\bC),$$
$$\displaystyle \alpha\mapsto (\alpha(x_i,.))_{1\leqslant i\leqslant k}.$$
To any $\alpha\in \mathcal{E}_G(X,\cW(\pi,\psi_n)^*)$ we associate a {\em relative Bessel distribution} $J_{\pi}^\alpha:C_c^\infty(X)\to \mathbb{C}$ by
$$\displaystyle J_\pi^\alpha(\varphi):= \langle \varphi\cdot \alpha,\lambda^\vee_1\rangle,\;\; \varphi\in C_c^\infty(X),$$
where $\varphi\cdot \alpha$ is the smooth functional
$$\displaystyle W\in \cW(\pi,\psi_n)\mapsto \int_{X} \varphi(x) \alpha(x,W) dx$$
that we identify with an element of $\overline{\cW(\pi,\psi_n)}=\cW(\pi^\vee,\psi_n^{-1})$ via the invariant inner product $\langle .,.\rangle_{\Whitt}$ defined by \eqref{def pairing Whittaker} and $\lambda^\vee_1$ denotes the functional $W^\vee \mapsto W^\vee(1)$ on $\cW(\pi^\vee,\psi_n^{-1})$. Similarly for any $\sigma\in \Temp(G')$, we define a {\em Bessel distribution} $I_\sigma$ on $C_c^\infty(G')$ by
$$\displaystyle I_\sigma(f'):=\langle f'\cdot \lambda_w,\lambda^\vee_1\rangle,\;\; f'\in C_c^\infty(G'),$$
where $f'\cdot \lambda_w$ is the smooth functional
$$\displaystyle W\in \cW(\sigma,\psi_n')\mapsto \int_{G'} f'(g)W(wg)dg$$
that we again identify with an element of $\cW(\sigma^\vee,{\psi'_n}^{-1})$ via the pairing $\langle .,.\rangle_{\Whitt}$ and $\lambda^\vee_1$ denotes the functional $W^\vee \mapsto W^\vee(1)$ on $\cW(\sigma^\vee,{\psi'_n}^{-1})$. We have
\begin{num}
\item The above Bessel distribution $I_\sigma$ coincides with the one defined in Section \ref{section Kusspec}.
\end{num}
Indeed, since both functionals are continuous on $C_c^\infty(G')$ we just need to show the equality between them for functions $f'\in C_c^\infty(G')$ which are right-$K'$-finite. Let $f'\in C_c^\infty(G')$ which transforms for the right action according to a finite dimensional representation $\rho$ of $K'$. Let $\cB[\rho^\vee]$ be a basis of the $\rho^\vee$-isotypic component $\cW(\sigma,\psi'_n)[\rho^\vee]$ that is orthonormal with respect to the inner product $\langle .,.\rangle_{\Whitt}$. Then, denoting temporarily by $I'_\sigma$ the Bessel functional defined in Section \ref{section Kusspec}, by \eqref{eq2 Fourier inversion} we have
\[\begin{aligned}
\displaystyle I'_\sigma(f') & =\int_{N'}^* \sum_{W\in \cB[\rho^\vee]} \langle R(uw)R(f')W,W\rangle_{\Whitt} \psi'_n(u)^{-1}du \\ & =\sum_{W\in \cB[\rho^\vee]} (f'\cdot \lambda_w)(W) \lambda_1^\vee(\overline{W})=I_\sigma(f').
\end{aligned}\]

The following is \cite[Theorem 12.4]{FLO}.

\begin{theo}[Feigon-Lapid-Offen]\label{theo FLO funct}
Let $\sigma\in \Temp(G')$. Then, there exists an unique element
$$\displaystyle \alpha^\sigma\in \mathcal{E}_G(X,\cW(\BC(\sigma),\psi_n)^*)$$
such that we have the identity
$$\displaystyle J_{\BC(\sigma)}^{\alpha^\sigma}(\varphi)=I_\sigma(f')$$
for every pair of matching test functions $(\varphi,f')\in C_c^\infty(X)\times C_c^\infty(G')$.
\end{theo}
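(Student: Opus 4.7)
The plan is to prove uniqueness and existence separately.

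For uniqueness, I would exploit the $G$-equivariance of $\alpha$. If $J_{\BC(\sigma)}^\alpha$ vanishes on $C_c^\infty(X)$, then in particular $J_{\BC(\sigma)}^\alpha(R(g)\varphi) = 0$ for every $g \in G$ and $\varphi \in C_c^\infty(X)$. A direct manipulation using the defining property $\alpha(xg, R(g^{-1})W) = \alpha(x, W)$ shows that $(R(g)\varphi) \cdot \alpha$ equals the $G$-translate of $\varphi \cdot \alpha$ in the smooth dual $\cW(\BC(\sigma)^\vee, \psi_n^{-1})$. Hence the function $g \mapsto J_{\BC(\sigma)}^\alpha(R(g)\varphi)$ is, up to inversion of the argument, the Whittaker function representing $\varphi \cdot \alpha$; its vanishing forces $\varphi \cdot \alpha = 0$. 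Letting $\varphi$ range over smooth approximations to delta distributions at arbitrary points of $X$ then yields $\alpha(x, W) = 0$ identically, whence $\alpha = 0$.

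For existence, the natural candidate is the functional $\varphi \mapsto I_\sigma(f')$ for any $f' \in C_c^\infty(G')$ matching $\varphi$, whose existence is guaranteed by Theorem \ref{theo JY transfer}. Two tasks then remain: (i) verify that $I_\sigma(f')$ depends only on $\varphi$, i.e.\ that $I_\sigma(f') = 0$ whenever $f'$ matches the zero function on $X$; and (ii) construct the $G$-equivariant $\alpha^\sigma$ producing this functional as $J_{\BC(\sigma)}^{\alpha^\sigma}$.

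The main obstacle is (i), for which the strategy is a local comparison of relative trace formulas. On the one hand, the Kuznetsov trace formula (Theorem \ref{theo local Kuznetsov}) relates the spectral distributions $I_\sigma$ to the orbital integrals $O(a, f')$; on the other hand, one develops (in the spirit of Chapter \ref{Chap Kuznetsov}) an analogous relative Kuznetsov trace formula for $X$ whose geometric side is built from the orbital integrals $O(t, \varphi)$ and whose spectral side is a combination of relative Bessel distributions $J_{\BC(\sigma')}^{\alpha'}$ indexed by the base-change fibers. Since matching $\varphi \leftrightarrow f'$ equates the two geometric sides (up to the transfer factor $\gamma$), the spectral expansions must agree; the Paley-Wiener theorem (Theorem \ref{theo Whittaker-Paley-Wiener theorem}) then provides enough spectral separation to isolate the relevant contribution and thereby both prove well-definedness and give an explicit description of $\alpha^\sigma$. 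Step (ii) is essentially formal: once $J$ is well-defined, the smooth function $g \mapsto J(R(g)\varphi)$ on $G$ inherits, through the transfer relation, the transformation properties of a Whittaker function in $\cW(\BC(\sigma)^\vee, \psi_n^{-1})$, and this reconstructs $\alpha^\sigma$ after dualizing via the pairing $\langle \cdot, \cdot\rangle_{\Whitt}$ and decomposing $X$ into its finitely many $G$-orbits.
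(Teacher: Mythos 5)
The paper does not prove this statement at all: it is quoted verbatim as \cite[Theorem 12.4]{FLO}, and the proof there is essentially global (it rests on Jacquet's comparison of global relative trace formulas, local--global arguments and analytic continuation), not the purely local argument you sketch. Your uniqueness argument is fine and standard. The problem is with existence, specifically with step (i), where your strategy is circular within the logic of this paper: the ``relative Kuznetsov trace formula for $X$ whose spectral side is a combination of relative Bessel distributions'' is exactly Theorem \ref{theo local TF for X}, and its spectral side is \emph{defined} in terms of the FLO relative characters $J_\sigma$; the proof of Theorem \ref{theo JY TF spec} proceeds by geometric matching, then invokes Theorem \ref{theo local Kuznetsov} on the $G'$-side, and finally uses Theorem \ref{theo FLO funct} to rewrite $I_\sigma$ as $J_\sigma$. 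There is no independently established spectral expansion of $J(\varphi_1,\varphi_2)$ in terms of a priori given invariant functionals on $X$ that one could compare with $\sum_\sigma I_\sigma(f_1)\overline{I_\sigma(f_2)}$; producing one would amount to proving the Plancherel decomposition of $L^2(X)$ together with the identification of its spectral measure with $\BC_*d\mu_{G'}$, which is Theorem \ref{theo PLancherel formula} --- itself deduced in this paper \emph{from} the FLO functionals.

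Step (ii) is also not ``essentially formal''. The transfer $\varphi\leftrightarrow f'$ is defined only through orbital integrals on $T_X\simeq T'$, and $G$ does not act on $G'$, so there is no direct relation between a matching function for $R(g)\varphi$ and $f'$; hence one cannot simply read off from the transfer that $g\mapsto J(R(g)\varphi)$ lies in $\cW(\BC(\sigma)^\vee,\psi_n^{-1})$. What is actually needed is a compatibility of the transfer with the actions of the Bernstein centers (or spherical Hecke algebras) of $G$ and $G'$ through base change, so that the functional $\varphi\mapsto I_\sigma(f')$ is seen to factor through the $\BC(\sigma)^\vee$-coinvariants $C_c^\infty(X)_{\BC(\sigma)}$. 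This is a substantive input (of ``fundamental lemma for the Hecke algebra'' type) in Feigon--Lapid--Offen's work, not a consequence of Frobenius reciprocity.
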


Let $\sigma \in \Temp(G')$ and $\alpha^\sigma\in \mathcal{E}_G(X,\cW(\BC(\sigma),\psi_n)^*)$ be as in the theorem above. We set $\alpha^\sigma_x=\alpha^\sigma(x,.)\in \Hom_{G_x}(\cW(\pi,\psi_n),\mathbb{C})$ for every $x\in X$ and we call them {\em the FLO functionals} associated to $\sigma$. By abuse of language, we shall also call $\alpha^\sigma$ the FLO functional associated to $\sigma$. For notational simplicity, we set
$$\displaystyle J_\sigma:=J_{\BC(\sigma)}^{\alpha^\sigma}$$
and call it {\em the FLO relative character} associated to $\sigma$.

Let $\lambda\in F^\times$ and $\varphi\in C_c^\infty(X)$. Then, for any matching test function $f'\in C_c^\infty(G')$ it is easy to see that the left translates $L(\lambda)\varphi=\varphi(\lambda^{-1}.)$ and $L(\lambda)f'=f'(\lambda^{-1}.)$ also match. From this and the characterization of the FLO functional, we readily infer that
\begin{equation}\label{equivariance FLO functionals center}
\displaystyle (L(\lambda)\varphi)\cdot \alpha^\sigma=\omega_\sigma(\lambda) \varphi\cdot \alpha^\sigma,\;\; \mbox{ for every } \sigma\in \Temp(G').
\end{equation}

\subsection{Harish-Chandra Schwartz and weak Harish-Chandra Schwartz spaces on X}\label{section HCS space for X}

\textbf{In this section and the next, we assume that $F$ is a $p$-adic field.}

For every $x\in X$ we set
$$\displaystyle \Xi^X(x)=\vol_X(xK)^{-1/2}.$$
Let $\sigma_X$ be a log-norm on $X$ (see \cite[\S 1.2]{BeuGGP}). We define the {\em Harish-Chandra Schwartz space} $\cC(X)$ as the space of functions $\varphi:X\to \bC$ which are right invariant by a compact-open subgroup of $G$ and such that for every $d>0$ we have
\begin{equation}\label{ineq HCS}
\displaystyle \lvert \varphi(x)\rvert\ll \Xi^X(x) \sigma_X(x)^{-d},\;\;\; x\in X.
\end{equation}
For every compact-open subgroup $J\subset G$, the subspace $\cC(X)^J\subset \cC(X)$ of right $J$-invariant functions is naturally a Fr\'echet space and therefore $\cC(X)=\bigcup_J \cC(X)^J$ is a strict LF space (that is a countable inductive limit of Fr\'echet spaces with closed embeddings as connecting morphisms). We have:

\begin{num}
\item\label{density HCS} The subspace $C_c^\infty(X)$ is dense in $\cC(X)$.
\end{num}

Indeed, let $J\subset G$ be a compact-open subgroup and $\varphi\in \cC(X)^J$. Let $(X_k)_{k\geqslant 1}$ be an increasing and exhausting sequence of $J$-invariant compact subsets of $X$. Then, the sequence $\varphi_k=\mathbf{1}_{X_k}\varphi$ belongs to $C_c^\infty(X)^J$ and converges to $\varphi$ in the Fréchet space $\cC(X)^J$ as can easily be seen from the fact that $\sigma_X(x)\to \infty$ as $x\to \infty$.

We define similarly the {\em weak Harish-Chandra Schwartz space} $\cC^w(X)$ as the space of functions $\varphi:X\to \bC$ which are right invariant by a compact-open subgroup of $G$ and satisfying the inequality
\begin{equation}\label{ineq HCSw}
\displaystyle \lvert \varphi(x)\rvert\ll \Xi^X(x) \sigma_X(x)^d,\;\;\; x\in X,
\end{equation}
for some $d>0$. For every compact-open subgroup $J\subset G$ and $d>0$, the subspace $\cC^w_d(X)^J\subset \cC^w(X)$ of right $J$-invariant functions which satisfies the estimates \eqref{ineq HCSw} for the given exponent $d$ is naturally a Fr\'echet space and therefore $\cC^w(X)=\bigcup_{J, d>0} \cC^w_d(X)^J$ is a LF space (that is a countable inductive limit of Fr\'echet spaces).

By \cite[Proposition 3.1.1(iii)]{BeuGalP}, for every $\varphi\in \cC(X)$ and $\varphi'\in \cC^w(X)$ the inner product $\langle \varphi,\varphi'\rangle_X$ converges absolutely.



\begin{prop}\label{prop HCS}
\begin{enumerate}[(i)]
\item For every $(\varphi,\varphi')\in \cC(X)\times \cC^w(X)$ the function
$$\displaystyle g\in G\mapsto \langle R(g)\varphi,\varphi'\rangle_X$$
belongs to $\cC^w(G)$ and the resulting sesquilinear map $\cC(X)\times \cC^w(X)\to \cC^w(G)$ is separately continuous.

\item The action by right convolution
$$\displaystyle C_c^\infty(G)\times \cC(X)\to \cC(X)$$
$$\displaystyle (f,\varphi)\mapsto R(f)\varphi$$
extends to a separately continuous bilinear map $\cC(G)\times \cC(X)\to \cC(X)$.

\item Let $\pi\in \Temp(G)$ and $\iota: \pi\to C^\infty(X)$ be a $G$-equivariant linear map. Then, the image of $\iota$ lands in $\cC^w(X)$.
\end{enumerate}
\end{prop}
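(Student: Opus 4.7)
All three parts rest on the same analytic toolkit: the temperedness of the pairs $(G,U(V))$ encoded in \eqref{X tempered} together with the standard properties \eqref{eq 1 HCS}--\eqref{eq 3 HCS} of $\Xi^G$. From these I would extract two basic pieces of information to drive the argument: the convergence $\int_X \Xi^X(x)^2\sigma_X(x)^{-D}dx<\infty$ for $D$ sufficiently large, which follows from \eqref{X tempered} combined with the integration formula \eqref{int formula X}; and a submultiplicative-type inequality relating $\Xi^X(xg)$ to $\Xi^X(x)$ and $\Xi^G(g)$, which is the analogue on $X$ of \eqref{eq 1 HCS}.

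For (i), the plan is to render quantitative the argument behind \eqref{eq2 X tempered}. Expanding $\langle R(g)\varphi,\varphi'\rangle_X=\int_X \varphi(xg)\overline{\varphi'(x)}dx$ and inserting the pointwise bounds \eqref{ineq HCS} and \eqref{ineq HCSw}, the submultiplicative estimate together with the convergence above should yield $\lvert \langle R(g)\varphi,\varphi'\rangle_X\rvert\ll \Xi^G(g)\sigma_G(g)^e$ for some $e$, with constant polynomial in finitely many defining seminorms of $\varphi$ and $\varphi'$. Separate continuity of the resulting sesquilinear map then reduces to bookkeeping these seminorms. Part (ii) is of a similar flavour: writing $(R(f)\varphi)(x)=\int_G f(g)\varphi(xg)dg$ and using the defining estimates of $\cC(G)$ and $\cC(X)$, the crux is the convolution-type bound
\begin{equation*}
\displaystyle \int_G \Xi^G(g)\sigma_G(g)^{-D}\Xi^X(xg)\sigma_X(xg)^{-d'}dg\ll \Xi^X(x)\sigma_X(x)^{-d''}
\end{equation*}
valid for $D$ sufficiently large in terms of $d',d''$, itself a direct consequence of \eqref{int formula X}, \eqref{X tempered} and \eqref{eq 1 HCS}--\eqref{eq 3 HCS}, entirely parallel to the closure of $\cC(G)$ under convolution. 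Separate continuity is then automatic from the shape of the estimate.

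The main obstacle is (iii). For $v\in \pi$ fixed by some compact-open subgroup $J$, $\iota(v)$ is automatically right $J$-invariant, and using \eqref{def X} I would treat one orbit $X_V=U(V)\backslash G$ at a time. Fixing $x_V\in X_V$, the functional $\ell_V(u):=\iota(u)(x_V)$ is continuous and $U(V)$-invariant on $\pi$, and $\iota(v)(x_V g)=\ell_V(\pi(g)v)$. The heart of the argument is to establish the matrix-coefficient-type bound
\begin{equation*}
\displaystyle \lvert \ell_V(\pi(g)v)\rvert \ll_v \Xi^{X_V}(U(V)g)\sigma_{X_V}(U(V)g)^d
\end{equation*}
for some $d>0$: an estimate for continuous $U(V)$-invariant functionals on the tempered representation $\pi$. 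My plan is to derive this from the classical bound $\lvert\langle \pi(g)v,v^\vee\rangle\rvert\ll \Xi^G(g)\sigma_G(g)^d$ on smooth matrix coefficients of $\pi$, combined with the temperedness \eqref{X tempered} of the pair $(G,U(V))$, via an averaging argument: since $\pi^J$ is finite-dimensional in the $p$-adic case, one represents $\ell_V|_{\pi^J}$ by a finite family of vectors in $(\pi^\vee)^J$ (extending by zero across the $J$-invariant complement to $\pi^J$), then averages the corresponding matrix coefficients over $U(V)$, the absolute convergence of the average being guaranteed by \eqref{X tempered}. Once this bound is in place, $\iota(v)\in\cC^w(X)$ follows immediately from the definition.
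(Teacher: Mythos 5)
Your reading of parts (i) and (ii) as pointwise-estimate statements is workable in principle, but it pushes all the difficulty into the two ``basic pieces of information'' you propose to extract, and the second of these is not a consequence of \eqref{X tempered} and \eqref{int formula X} alone. The doubling inequality $\int_X\Xi^X(xg)\Xi^X(x)\sigma_X(x)^{-D}dx\ll\Xi^G(g)\sigma_G(g)^{e}$ (and likewise the convolution bound you invoke for (ii)) is essentially equivalent to the statement that $K$-finite matrix coefficients of the unitary representation $L^2(X)$ are dominated by $\Xi^G$; the paper obtains exactly this by combining \cite[Proposition 2.7.1]{BeuGalP} (temperedness of the pair forces $L^2(X)$ to be a tempered unitary representation) with the Cowling--Haagerup--Howe bound, after identifying $\cC(X)$ and $\cC^w(X)$ with weighted $L^2$-spaces via Bernstein's key lemma. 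To run your pointwise version you must either import the corresponding estimates on $\Xi^X$ from \cite[\S 3.1]{BeuGalP} or reprove them by this CHH argument; they are not analogues of \eqref{eq 1 HCS} that come for free. Note also that the paper handles (ii) differently: rather than a convolution estimate, it defines $R(f)\varphi$ by duality against $\cC^w(X)$ using (i), and gets continuity in $f$ from the closed graph theorem.

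The genuine gap is in (iii). Your reduction to a bound on $\ell_V(\pi(g)v)$ is correct, but the proposed averaging over $U(V)$ cannot produce it. First, representing $\ell_V$ on $\pi^J$ by a smooth vector $v^\vee\in(\pi^\vee)^J$ does not give $\ell_V(\pi(g)v)=\langle\pi(g)v,v^\vee\rangle$, since $\pi(g)v$ leaves $\pi^J$ for general $g$. Second, the repair you suggest---writing $\ell_V$ as $\int_{U(V)}\pi^\vee(h)v^\vee\,dh$ and bounding $\ell_V(\pi(g)v)$ by $\int_{U(V)}\Xi^G(hg)\,dh$---fails because \eqref{X tempered} only asserts convergence of $\int_{U(V)}\Xi^G(h)\sigma_G(h)^{-d}dh$ for \emph{some} $d>0$, while tempered matrix coefficients are merely $O(\Xi^G)$: the pair $(G,U(V))$ is tempered but not strongly tempered (as is already visible from Theorem \ref{theo PLancherel formula}, whose support is a proper subset of $\Temp(G)$), so the $U(V)$-average diverges absolutely in general; and even where such averages converge they need not exhaust $\Hom_{U(V)}(\pi,\bC)$. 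The mechanism that actually works, used in the paper following Lagier and Kato--Takano, is the weak Cartan decomposition $X_V=x_VG_V^+$ together with the wavefront lemma \eqref{eq wavefront lemma}: averaging $\ell_V$ over a small compact open subgroup $J_2$ produces a smooth vector $e_V^\vee\in\pi^\vee$ with $\iota(v)(x_Vg)=\langle\pi(g)v,e_V^\vee\rangle$ for all $g\in G_V^+$, and then the CHH bound combined with the inequality $\Xi^G(g)\ll\Xi^X(x_Vg)\sigma_X(x_Vg)^{d}$ valid on $G_V^+$ gives the required estimate.
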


\begin{proof}
\begin{enumerate}[(i)]
\item According to \cite[Key Lemma, \S 3.4]{Ber3} we have equalities of topological vector spaces
\begin{equation}\label{eq1 Bernstein key lemma}
\displaystyle \cC(X)=\bigcap_{d>0} L^2(X,\sigma_X(x)^d dx)^\infty \mbox{ and } \cC^w(X)=\bigcup_{d>0} L^2(X, \sigma_X(x)^{-d}dx)^\infty
\end{equation}
where for $d\in \bR$, $L^2(X,\sigma_X(x)^d dx)$ stands for the space of smooth (that is right-invariant by a compact-open subgroup) square-integrable functions on $X$ with respect to the measure $\sigma_X(x)^d dx$. Let $\lVert .\rVert_{X,d}$ be the Hilbert norm on $L^2(X,\sigma_X(x)^d dx)$ and set $\lVert .\rVert_X=\lVert .\rVert_{X,0}$. We may assume, without loss in generality, that the log-norm $\sigma_X$ is right $K$-invariant.

Recall that for every $V\in \cV$, the pair $(G,U(V))$ is {\em tempered} in the sense of \cite[\S 2.7]{BeuGalP} (see \eqref{X tempered}). Hence, by \cite[Proposition 2.7.1]{BeuGalP}, the unitary $G$-representation $L^2(X)$ is tempered meaning that its Plancherel support is included in the set of irreducible tempered representations. From \cite[Theorem 2]{CHH}, it follows that for every compact-open subgroup $J\subset K$, there exists a constant $C_J>0$ such that
\begin{equation}\label{eq1 CHH}
\displaystyle \langle R(g)\varphi_1,\varphi_2\rangle_X\leqslant C_J \Xi^G(g) \lVert \varphi_1\rVert_{X} \lVert \varphi_2\rVert_X
\end{equation}
for every $\varphi_1,\varphi_2\in L^2(X)^J$ and $g\in G$.

Let now $d>0$, $J\subset K$ be a compact-open subgroup and $(\varphi_1,\varphi_2)\in L^2(X,\sigma_X(x)^d dx)^J\times L^2(X,\sigma_X(x)^{-d} dx)^J$. Then, we have $\sigma_X^{d/2}\lvert \varphi_1\rvert \in L^2(X)^J$ and $\sigma_X^{-d/2}\lvert \varphi_2\rvert \in L^2(X)^J$. Moreover, there exists a constant $C_0>0$ such that $\sigma_X(x)\leqslant C_0\sigma_X(xg) \sigma_G(g)$ for every $(x,g)\in X\times G$. Therefore, using \eqref{eq1 CHH}, we obtain
\[\begin{aligned}
\displaystyle \left\lvert \langle R(g)\varphi_1,\varphi_2\rangle_X \right\rvert & \leqslant \int_X \lvert \varphi_1\rvert(xg) \lvert \varphi_2\rvert(x) dx=\int_X \sigma_X(x)^{d/2}\lvert \varphi_1\rvert(xg) \sigma_X(x)^{-d/2}\lvert \varphi_2\rvert(x) dx \\
& \leqslant C_0 \sigma_G(g)^{d/2} \int_X \sigma_X(xg)^{d/2}\lvert \varphi_1\rvert(xg) \sigma_X(x)^{-d/2}\lvert \varphi_2\rvert(x) dx \\
& =C_0\sigma_G(g)^{d/2} \langle R(g)\sigma_X^{d/2}\lvert \varphi_1\rvert, \sigma_X^{-d/2}\lvert \varphi_2\rvert\rangle_X \\
 & \leqslant C_0C_J\Xi^G(g)\sigma_G(g)^{d/2} \lVert \varphi_1\rVert_{X,d} \lVert \varphi_2\rVert_{X,-d}
\end{aligned}\]
for every $g\in G$. Combined with \eqref{eq1 Bernstein key lemma}, this implies part (i) of the proposition.

\item Let $\varphi\in \cC(X)$. We need to show that the linear map $f\in C_c^\infty(G)\mapsto R(f)\varphi\in \cC(X)$ extends continuously to $\cC(G)$. The equalities \eqref{eq1 Bernstein key lemma} imply that, through the integration pairing $\langle .,.\rangle_X$, $\cC(X)$ gets identified with the space of smooth continuous anti-linear forms on $\cC^w(X)$. Let $f\in \cC(G)$. By (i), the anti-linear form
$$\displaystyle \varphi'\in \cC^w(X)\mapsto \int_G f(g)\langle R(g)\varphi,\varphi'\rangle_X dg$$
is well-defined and continuous. It is also smooth as $f$ is biinvariant by a compact-open subgroup. Therefore, there exists an unique element $R(f)\varphi\in \cC(X)$ such that
$$\displaystyle \int_G f(g)\langle R(g)\varphi,\varphi'\rangle_X dg=\langle R(f)\varphi,\varphi'\rangle_X$$
for every $\varphi'\in \cC^w(X)$. Moreover, this definition is easily seen to coincides with the action by right convolution when $f\in C_c^\infty(G)$. Finally, the linear map $f\in \cC(G)\mapsto R(f)\varphi\in \cC(X)$ is continuous by the closed graph theorem \cite[Corollary 4, \S 17]{Tr} since, by definition, for every $\varphi'\in \cC^w(X)$ the linear form $f\in \cC(G)\mapsto \langle R(f)\varphi,\varphi'\rangle_X$ is continuous.

\item The argument is similar to the proof of \cite[Lemma 4.2.1]{BeuGalP} so we only sketch it. The idea, which goes back to Lagier \cite{Lag} and Kato-Takano \cite{KT}, is to relate functions in the image of $\iota$ to smooth matrix coefficients of $\pi$ and then deduce the result from the known asymptotics for smooth matrix coefficients of tempered representations. More precisely, for each $V\in \cV$, denoting by $x_V\in X_V=U(V)\backslash G$ the canonical base-point, using the weak Cartan decomposition of Benoist-Oh \cite{BO} and Delorme-S\'echerre \cite{DS} (see also \cite[Lemma 5.3.1]{SV} for a different proof) we can construct as in \cite[Corollary 5.3.2]{SV} a subset $G_V^+\subset G$ such that
\begin{equation}\label{eq1 wavefront}
\displaystyle X_V=x_V G_V^+
\end{equation}
and (the so-called ``wave-front lemma'')
\begin{num}
\item\label{eq wavefront lemma} For every compact-open subgroup $J_1\subset G$, there exists another compact-open subgroup $J_2\subset G$ such that
$$\displaystyle x_VJ_2g\subset x_VgJ_1$$
for every $g\in G_V^+$.
\end{num}
Moreover, by \cite[Proposition 3.3.1 (ii)]{BeuGalP} (which holds as the pair $(G,U(V))$ is tempered in the sense of \cite[\S 2.7]{BeuGalP}, see \eqref{X tempered}) there also exists $d>0$ such that
\begin{equation}\label{eq2 wavefront}
\displaystyle \Xi^G(g)\ll \Xi^X(x_Vg)\sigma_X(x_Vg)^{d}, \; g\in G_V^+.
\end{equation}

Let $e\in \pi$ and $J_1\subset G$ be a compact-open subgroup leaving $e$ invariant. Let $J_2\subset G$ be as in \eqref{eq wavefront lemma} (for every $V\in \cV$). Then, by equivariance of $\iota$, for every $k_2\in J_2$ there exists $k_1\in J_1$ such that
$$\displaystyle \iota(e)(x_Vg)=\iota(e)(x_Vgk_1)=\iota(e)(x_Vk_2g)=\iota(\pi(k_2g)e)(x_V)$$
for every $V\in \cV$ and $g\in G_V^+$. Therefore,
$$\displaystyle \iota(e)(x_Vg)=\int_{K_2}\iota(\pi(k_2g)e)(x_V)dk_2=\langle \pi(g)e,e_V^\vee\rangle \mbox{ for } V\in \cV, g\in G_V^+$$
where $e_V^\vee$ is a certain vector in the smooth contragredient of $\pi$. By the asymptotic of smooth coefficients of tempered representations \cite{CHH}, we have $\lvert \langle \pi(g)e,e_V^\vee\rangle\rvert\ll \Xi^G(g)$ for $g\in G$, hence by \eqref{eq1 wavefront} and \eqref{eq2 wavefront} we get
\begin{equation*}
\displaystyle \lvert \iota(e)(x)\rvert\ll \Xi^X(x) \sigma_X(x)^d
\end{equation*}
for every $x\in X=\bigsqcup_{V\in \cV} X_V$. As the function $\iota(e)$ is also smooth, this shows that $\iota(e)\in \cC^w(X)$ and the proposition is proved.

\end{enumerate}
\end{proof}

\subsection{Abstract tempered relative characters}\label{section abstract rel chars}

In this section, we continue to assume that $F$ is a $p$-adic field. Let $\pi\in \Temp(G)$. We denote by $C_c^\infty(X)_\pi$ the $\pi^\vee$-coinvariant space of $C_c^\infty(X)$ i.e. the maximal quotient which is $G$-isomorphic to a direct sum of copies of $\pi^\vee$. We define the space of {\em abstract relative characters supported on} $\pi$ as the space
$$\displaystyle \Hom_N(C_c^\infty(X)_\pi,\psi_n)$$
of $(N,\psi_n)$-equivariant functionals on $C_c^\infty(X)_\pi$. Note that $J_\sigma\in \Hom_N(C_c^\infty(X)_{\BC(\sigma)},\psi_n)$ for every $\sigma\in \Temp(G')$.

\begin{lem}\label{lem1 abstract rc}
Let $J\in \Hom_N(C_c^\infty(X)_\pi,\psi_n)$. Then, $J$ extends by continuity to $\cC(X)$ and moreover there exists a function $F\in \cC^w(X)$ such that
\begin{align}\label{eq1 abstract rc}
\displaystyle J(\varphi)=\int_N^* \langle R(u)\varphi,F\rangle_X \psi_n(u)^{-1}du
\end{align}
for every $\varphi\in \cC(X)$.
\end{lem}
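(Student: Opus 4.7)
The plan is to construct $F$ as the image of a distinguished Whittaker vector under an adjoint map, verify the integral identity via a Whittaker Fourier inversion on $G$, and extend by continuity using Proposition~\ref{prop HCS}.

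First, if $\pi^\vee$ is not $(N,\psi_n)$-generic then $\Hom_N(C_c^\infty(X)_\pi,\psi_n)=0$, so $J=0$ and we may take $F=0$. Assume henceforth that $\pi^\vee$ is $(N,\psi_n)$-generic, with Whittaker model $\cW(\pi^\vee,\psi_n)$. Introduce a $G$-equivariant map
\[
\tilde J\colon C_c^\infty(X)\longrightarrow \cW(\pi^\vee,\psi_n),\qquad \tilde J(\varphi)(g):=J(R(g)\varphi),
\]
which is well-defined because the left $(N,\psi_n)$-equivariance follows from that of $J$, and because $\tilde J$ factors through the $\pi^\vee$-coinvariants so lands in the unique copy of $\pi^\vee$ inside $\mathrm{Ind}_N^G\psi_n$ (Frobenius reciprocity). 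Taking the adjoint of $\tilde J$ with respect to the $G$-invariant sesquilinear pairings $\langle\cdot,\cdot\rangle_X$ and $\langle\cdot,\cdot\rangle_{\Whitt}$ yields a map $\iota_J\colon \cW(\pi^\vee,\psi_n)\to C^\infty(X)$ characterized by
\[
\int_X \varphi(x)\,\overline{\iota_J(W)(x)}\,dx \;=\; \langle \tilde J(\varphi),W\rangle_{\Whitt},\quad \varphi\in C_c^\infty(X),\ W\in \cW(\pi^\vee,\psi_n).
\]
The distribution $\iota_J(W)$ is automatically a locally constant function, as it is invariant under any open compact subgroup of $G$ fixing $W$; moreover $\iota_J$ is $G$-equivariant. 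Since $\cW(\pi^\vee,\psi_n)\simeq \pi^\vee\in \Temp(G)$, Proposition~\ref{prop HCS}(iii) gives $\iota_J(\cW(\pi^\vee,\psi_n))\subset \cC^w(X)$. Choose $W_0\in \cW(\pi^\vee,\psi_n)$ with $W_0(1)=1$ (possible by genericity) and set $F:=\iota_J(W_0)\in \cC^w(X)$.

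For $\varphi\in C_c^\infty(X)$ the $G$-equivariance of $\tilde J$ gives
\[
\langle R(u)\varphi,F\rangle_X=\langle \tilde J(R(u)\varphi),W_0\rangle_{\Whitt}=\langle R(u)\tilde J(\varphi),W_0\rangle_{\Whitt}.
\]
Integrating against $\psi_n(u)^{-1}\,du$ and applying the Whittaker Fourier inversion identity \eqref{eq2 Fourier inversion} (which, as noted in the text, holds verbatim for $G$ in place of $G'$), we obtain
\[
\int_N^* \langle R(u)\varphi,F\rangle_X \psi_n(u)^{-1}\,du \;=\; \tilde J(\varphi)(1)\,\overline{W_0(1)} \;=\; J(\varphi),
\]
establishing \eqref{eq1 abstract rc} on $C_c^\infty(X)$. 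Finally, Proposition~\ref{prop HCS}(i) shows that $\varphi\in \cC(X)\mapsto \langle R(\cdot)\varphi,F\rangle_X\in \cC^w(G)$ is continuous and the $(N,\psi_n)$-regularized integral is continuous on $\cC^w(G)$ by construction, so the right-hand side of \eqref{eq1 abstract rc} defines a continuous linear functional on $\cC(X)$. By the density \eqref{density HCS} of $C_c^\infty(X)$ in $\cC(X)$, this is the unique continuous extension of $J$.

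The main subtle point is ensuring that the adjoint $\iota_J(W)$ is a genuine function on $X$ rather than merely a distribution; in the $p$-adic setting this is immediate from the open compactness of the $G$-stabilizer of $W$, which is precisely why this lemma is restricted to $p$-adic $F$.
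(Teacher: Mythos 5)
Your proposal is correct and follows essentially the same route as the paper: the map $\tilde J$ is exactly the paper's $W_J$ obtained from Frobenius reciprocity and uniqueness of Whittaker models, $F$ is the smooth adjoint applied to a Whittaker function with $W_0(1)=1$, the identity comes from the Fourier inversion formula \eqref{eq2 Fourier inversion}, and the continuous extension uses Proposition \ref{prop HCS}(i) and (iii) as in the text. No gaps.
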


\begin{rem}
Note that by Proposition \eqref{prop HCS}(i) the above ``regularized" integral makes sense for every $\varphi\in \cC(X)$ and $F\in \cC^w(X)$.
\end{rem}

\begin{proof}
By Frobenius reciprocity and unicity of the Whittaker model, $J$ induces a $G$-equivariant linear map
$$\displaystyle W_J: C_c^\infty(X)\to \cW(\pi^\vee,\psi_n)$$
satisfying that $J(\varphi)=W_J(\varphi)(1)$ for every $\varphi\in C_c^\infty(X)$. Let $W_J^*: \cW(\pi^\vee,\psi_n)\to C^\infty(X)$ be the smooth adjoint of $W_J$ with respect to the invariant inner products $\langle .,.\rangle_X$ and $\langle .,.\rangle_{\Whitt}$. By \eqref{eq2 Fourier inversion}, we have
$$\displaystyle J(\varphi)\overline{w(1)}=W_J(\varphi)(1)\overline{w(1)}=\int_N^* \langle R(u) W_J(\varphi),w\rangle_{\Whitt} \psi_n(u)^{-1}du$$
for every $\varphi\in C_c^\infty(X)$ and $w\in \cW(\pi^\vee,\psi_n)$. Choose $w\in \cW(\pi^\vee,\psi_n)$ such that $w(1)=1$ and set $F=W_J^*(w)$. By Proposition \eqref{prop HCS}(iii), we have $F\in \cC^w(X)$. On the other hand, by adjunction we have $\langle R(u) W_J(\varphi),w\rangle_{\Whitt}=\langle R(u)\varphi, F\rangle_X$ for every $\varphi\in C_c^\infty(X)$ and $u\in N$. Therefore the function $F$ satisfies \eqref{eq1 abstract rc} for every $\varphi\in C_c^\infty(X)$. That $J$ extends continuously to $\cC(X)$ and \eqref{eq1 abstract rc} is still satisfied for $\varphi\in \cC(X)$ now follows from Proposition \eqref{prop HCS}(i).
\end{proof}

\section{Jacquet-Ye's local trace formula}\label{chapter JY TF}

In this chapter, we develop a local trace formula for the symmetric variety $X$. More precisely, we consider a {\em relative local Kuznetsov trace formula} for $X$ which is obtained by applying the $(N,\psi_n)$-regularized integral of Section \ref{section HCS} to a matrix coefficient for $L^2(X)$. The resulting `distribution' (a sesquilinear form on $C_c^\infty(X)$) admits both a geometric expansion, in terms of relative orbital integrals, and a spectral expansion, in terms of the FLO relative characters of Section \ref{section FLO funct}. The equality between the two expansions is the aforementioned local trace formula (Theorem \ref{theo local TF for X}). It will be applied in Chapters \ref{Chapter multiplicities} and \ref{chapter Plancherel formula} to finish the computation of multiplicities of generic representations with respect to $X$ and to the Plancherel decomposition of $X$ respectively. In Section \ref{section JY TF geom}, we define the relevant distribution on $C_c^\infty(X)$ and we establish a geometric expansion for it. In Section \ref{section JY TF spec}, we state and prove the spectral expansion and the resulting trace formula identity (Theorem \ref{theo local TF for X}).

We note here that a similar formula has been developed by Feigon \cite[Sect. 4]{Fe} in the context of the symmetric variety $X=\PGL_2(F)\backslash \PGL_2(E)$. One main difference between the two formulas is that the spectral side of Feigon's identity is given in terms of explicit invariant linear forms on tempered representations whereas the spectral side of Theorem \ref{theo local TF for X} is given in terms of the FLO functionals $J_\sigma$ (see the definition at the beginning of \S \ref{section JY TF spec}) which are in turn only defined implicitely through the Jacquet-Ye transfer (see \S \ref{section FLO funct}).

\subsection{Geometric expansion}\label{section JY TF geom}

Let $\varphi_1,\varphi_2\in C_c^\infty(X)$. By \eqref{eq2 X tempered}, we can define the following expression
\begin{equation}\label{def J}
\displaystyle J(\varphi_1,\varphi_2)=\int_N^* \langle R(u)\varphi_1,\varphi_2\rangle_X \psi_n(u)^{-1}du
\end{equation}
where the right-hand side is an $(N,\psi_n^{-1})$-regularized integral as defined in Section \ref{section HCS}.


For $t\in T_X$ and $\varphi\in C_c^\infty(X)$ we set
$$\displaystyle O(t,\varphi)=\int_N \varphi(tu) \psi_n(u)^{-1}du.$$

\begin{lem}\label{lem1 JY TF geom}
The expression defining $O(t,\varphi)$ is absolutely convergent locally uniformly in $t$ and $\varphi$.
\end{lem}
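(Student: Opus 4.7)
The plan is as follows. Since $\varphi \in C_c^\infty(X)$ has compact support $C := \Supp \varphi$, and since the integrand $u \mapsto \varphi(tu)\psi_n(u)^{-1}$ is pointwise bounded by $\lVert \varphi\rVert_\infty \mathbf{1}_{\{u\,:\,t\cdot u \in C\}}$, it suffices to establish: for every compact $K_T \subset T_X$ and every compact $C \subset X$, the set
$$\Omega := \{u \in N \mid \exists\, t \in K_T,\; t\cdot u \in C\}$$
is relatively compact in $N$. This will give local uniform convergence simultaneously in $t$ and in $\varphi$ (the latter ranging over functions with support in a fixed compact).

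Under the identification $X \simeq \Herm_n^*$ from Section \ref{S:symm X}, the action is $t\cdot u = {}^tu^c\, t\, u$, and $T_X$ corresponds to the diagonal matrices in $T' \subset \Herm_n^*$. The key input is then the Cholesky (LDL) decomposition of Hermitian matrices: a matrix $h \in \Herm_n^*$ admits a decomposition $h = {}^tu^c\, t\, u$ with $u \in N$ upper unipotent and $t = \diago(a_1,\ldots,a_n) \in T_X$ if and only if all leading principal minors $\det(h_{[1..j]})$ are nonzero, in which case the decomposition is unique and
$$a_j = \frac{\det(h_{[1..j]})}{\det(h_{[1..j-1]})}, \quad 1 \leqslant j \leqslant n,$$
with $\det(h_{[1..0]}) := 1$, while $u$ is then determined column-by-column by back-substitution. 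Consequently the orbit map $\mu : T_X \times N \to X$, $(t,u) \mapsto t\cdot u$, is a homeomorphism onto the open subset $X^\circ \subset X$ of Hermitian matrices with all leading principal minors invertible.

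The main step is to show that $\mu(K_T \times N)$ is closed in $X$. Given a sequence $(t_k,u_k) \in K_T \times N$ with $h_k := \mu(t_k,u_k) \to h$ in $X$, we may after extracting assume $t_k \to t \in K_T$ by compactness. The diagonal entries $a_{k,j}$ of $t_k$ remain in a compact subset of $F^\times$, hence are bounded away from $0$; since $\det(h_{k,[1..j]}) = \prod_{i=1}^j a_{k,i}$ by the Cholesky formulas, passing to the limit forces $\det(h_{[1..j]}) \neq 0$ for all $j$, so $h \in X^\circ$ and its Cholesky decomposition yields $h = \mu(t,u)$ with $t \in K_T$.

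Granting this, $C \cap \mu(K_T \times N)$ is compact in $X$ as a closed subset of the compact set $C$, and its preimage under the homeomorphism $\mu$ is $\mu^{-1}(C) \cap (K_T \times N)$, which is therefore compact in $K_T \times N$. Projecting to $N$ yields compactness of $\Omega$. The main obstacle is the closedness of $\mu(K_T \times N)$: the full image $\mu(T_X \times N) = X^\circ$ is merely open (not closed) in $X$, and closedness of its restriction to $K_T$ rests precisely on the Cholesky identification of the diagonal entries of $t$ as ratios of minors of $h$, together with the compactness of $K_T$ in $T_X$ preventing those minors from degenerating at the limit.
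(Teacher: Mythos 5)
Your proof is correct and takes essentially the same route as the paper, which simply asserts that the morphism $T_X\times N\to T_X\times X$, $(t,u)\mapsto (t,tu)$ is a closed embedding, hence proper, and deduces the lemma from that. Your Cholesky/LDL argument --- identifying the diagonal entries of $t$ as ratios of leading principal minors of $h={}^tu^c\,t\,u$ and using compactness of $K_T$ to keep those minors nondegenerate in the limit --- is precisely a verification of the properness the paper leaves unproved.
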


\begin{proof}
This follows from the fact that the morphism $T_X\times N \to T_X\times X$, $(t,u)\mapsto (t,tu)$ is a closed embedding (hence proper). \end{proof}

Set
$$\displaystyle J_{\geom}(\varphi_1,\varphi_2)=\int_{T_X} O(t,\varphi_1)\overline{O(t,\varphi_2)}\delta_X(t)dt.$$
The main result of this section is the following.

\begin{theo}\label{theo JY TF geom}
The expression defining $J_{\geom}(\varphi_1,\varphi_2)$ is absolutely convergent and we have
$$\displaystyle J(\varphi_1,\varphi_2)=J_{\geom}(\varphi_1,\varphi_2).$$
\end{theo}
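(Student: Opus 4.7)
The argument is a direct unfolding and Fubini swap, made considerably simpler than the analogous proof of Theorem \ref{theo Kusgeom} by the compactness of the supports of $\varphi_1$ and $\varphi_2$.

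\textbf{Step 1 (compactness of support).} Using the integration formula \eqref{int formula X}, one unfolds
$$\langle R(u)\varphi_1,\varphi_2\rangle_X=\int_{T_X}\int_N \varphi_1(tvu)\overline{\varphi_2(tv)}\delta_X(t)\,dv\,dt.$$
I claim the set of triples $(t,v,u)\in T_X\times N\times N$ for which the integrand is nonzero is compact. By (the proof of) Lemma \ref{lem1 JY TF geom}, the map $(t,v)\mapsto tv$ from $T_X\times N$ to $X$ is a closed embedding, so the condition $tv\in \Supp(\varphi_2)$ traps $(t,v)$ in a compact subset of $T_X\times N$; for each such $(t,v)$ the condition $tvu=t\cdot(vu)\in \Supp(\varphi_1)$ then traps $vu$, and hence $u$, in a compact subset of $N$, again by Lemma \ref{lem1 JY TF geom}. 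Consequently $u\mapsto\langle R(u)\varphi_1,\varphi_2\rangle_X$ has compact support on $N$, so the regularized integral defining $J(\varphi_1,\varphi_2)$ coincides with the absolutely convergent ordinary integral, and the resulting triple integral over $T_X\times N\times N$ converges absolutely.

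\textbf{Step 2 (Fubini and change of variable).} With absolute convergence in hand, Fubini permits any reordering of the integrations. Performing the innermost integral in $u$ first via the measure-preserving substitution $u\mapsto v^{-1}u$, together with the character identity $\psi_n(v^{-1}u)^{-1}=\psi_n(v)\psi_n(u)^{-1}$, one obtains
$$\int_N \varphi_1(tvu)\psi_n(u)^{-1}\,du=\psi_n(v)\,O(t,\varphi_1).$$
Integrating the remaining factor $\psi_n(v)\overline{\varphi_2(tv)}$ over $v$ yields $\overline{O(t,\varphi_2)}$, so
$$J(\varphi_1,\varphi_2)=\int_{T_X}O(t,\varphi_1)\overline{O(t,\varphi_2)}\delta_X(t)\,dt=J_{\geom}(\varphi_1,\varphi_2),$$
and the absolute convergence of $J_{\geom}(\varphi_1,\varphi_2)$ falls out as a byproduct.

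\textbf{Main obstacle.} The only substantive point is the compactness claim of Step 1, which rests on the geometric content of \eqref{int formula X} together with Lemma \ref{lem1 JY TF geom}. A minor technicality is the equality of the $(N,\psi_n^{-1})$-regularized integral with the ordinary one when applied to a function of $\cC^w(G)$ whose restriction to $N$ is compactly supported; in the $p$-adic case one may approximate by elements of $C_c^\infty(G)$ that agree with $F$ on a fixed compact neighborhood of $\Supp(F|_N)$, and in general one can alternatively apply the $\Ad(\varphi)$ trick used in Theorem \ref{theo Kusgeom} with $\varphi=\vol(K_T)^{-1}\mathbf{1}_{K_T}$ for $K_T\subset T$ small enough that $\widehat{\varphi}$ agrees with $\psi_n^{-1}$ on $\Supp(F|_N)$.
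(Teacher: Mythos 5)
Your Step 1 contains a fatal error, and the rest of the argument collapses with it. Lemma \ref{lem1 JY TF geom} (more precisely, its proof) asserts that $(t,v)\mapsto (t,tv)$ is a closed embedding of $T_X\times N$ into $T_X\times X$; it does not say that $(t,v)\mapsto tv$ is a closed embedding into $X$, and it is not one: under the identification $X\simeq \Herm_n^*$ the image of $(t,v)\mapsto {}^tv^c\,t\,v$ is the open dense locus where all leading principal minors are nonzero, which is not closed in $X$. Properness of $(t,v)\mapsto(t,tv)$ therefore only yields compactness of $\{(t,v): t\in C,\ tv\in\Supp(\varphi_2)\}$ for $C\subseteq T_X$ compact (which is exactly the local uniformity in $t$ stated in Lemma \ref{lem1 JY TF geom}); the full set $\{(t,v): tv\in\Supp(\varphi_2)\}$ is in general non-compact, because the coordinates $(t,v)$ blow up as $tv$ approaches the complement of $T_X N$ inside $\Supp(\varphi_2)$. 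Concretely, for $n=2$ let $\Supp(\varphi_2)$ be a neighbourhood of $\left(\begin{smallmatrix}0&1\\1&0\end{smallmatrix}\right)$: writing $x={}^tv^c\,t\,v$ one finds $t_1=x_{11}\to 0$ while the off-diagonal entry of $v$ tends to infinity.

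More to the point, the conclusion you draw --- that $u\mapsto\langle R(u)\varphi_1,\varphi_2\rangle_X$ is compactly supported on $N$ --- is false, which is precisely why the regularized integral $\int_N^*$ appears in the definition \eqref{def J} of $J(\varphi_1,\varphi_2)$. For instance, take $x_0=w\in\Herm_n^*$ and $\varphi_1,\varphi_2\geqslant 0$ with $\varphi_i(x_0)>0$; then for every $u$ in $N\cap G_{x_0}$, which is the unipotent radical of a Borel subgroup of the quasi-split unitary group and hence non-compact for $n\geqslant 2$, one has $x_0u=x_0$ and thus $\langle R(u)\varphi_1,\varphi_2\rangle_X>0$. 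A correct proof has to follow the pattern of Theorem \ref{theo Kusgeom}: write $\varphi_1\otimes\overline{\varphi_2}=R^\Delta(\phi)\Phi$ (with $\phi=\vol(K_T)^{-1}\mathbf{1}_{K_T}$ in the $p$-adic case, by Dixmier--Malliavin in the Archimedean case), use \eqref{eq 4 HCS} to convert $\int_N^*$ into an absolutely convergent integral against the rapidly decreasing function $\widehat{\phi}$, unfold via \eqref{int formula X}, and justify the final interchange of the $T_X$- and $T$-integrations by the positivity/Cauchy--Schwarz argument of \eqref{eq 6 Kusgeom}. Your Step 2 manipulations are the easy part and do appear, in this regularized form, at the end of that argument; but they cannot be run on the naive unregularized triple integral.
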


\begin{proof}
The proof is very similar to that of Theorem \ref{theo Kusgeom} so we will be brief and not give all the details. First we extend the definition of $J(\varphi_1,\varphi_2)$ to $\Phi\in C_c^\infty(X\times X)$ by
$$\displaystyle J(\Phi):=\int_N^* \int_X \Phi(xu,x)dx \psi_n(u)^{-1}du.$$
Note that this expression makes sense since we can show similarly to \eqref{eq2 X tempered} that the function
$$\displaystyle K_\Phi: g\in G\mapsto \int_X \Phi(xg,x)dx$$
belongs to $\cC^w(G)$. We have $J(\varphi_1,\varphi_2)=J(\varphi_1\otimes \overline{\varphi_2})$ where $\varphi_1\otimes \overline{\varphi_2}\in C_c^\infty(X\times X)$ is the function given by $(\varphi_1\otimes \overline{\varphi_2})(x_1,x_2)=\varphi_1(x_1)\overline{\varphi_2(x_2)}$.

Let $R^\Delta$ be the right diagonal action of $T$ on $C_c^\infty(X\times X)$. In the $p$-adic case, we choose a compact-open subgroup $K_T$ of $T$ by which both $\varphi_1$ and $\varphi_2$ are right-invariant and we set $\phi=\vol(K_T)^{-1}\mathbf{1}_{K_T}$, $\Phi=\varphi_1\otimes \overline{\varphi_2}$ so that $\varphi_1\otimes \overline{\varphi_2}=R^\Delta(\phi) \Phi$. In the Archimedean case, by Dixmier-Malliavin \cite{DM}, we may assume that $\varphi_1\otimes \overline{\varphi_2}=R^\Delta(\phi) \Phi$ for some $\phi\in C_c^\infty(T)$ and $\Phi\in C_c^\infty(X\times X)$. Then, by \eqref{eq 4 HCS}, in both cases we have
\begin{align}\label{eq 1bis JY TF geom}
\displaystyle J(\varphi_1,\varphi_2) & =\int_N^* K_{R^\Delta(\phi)\Phi}(u)\psi_n(u)^{-1}du=\int_N^* (\Ad(\phi)K_\Phi)(u)\psi_n(u)^{-1}du \\
\nonumber & =\int_N K_\Phi(u)\widehat{\phi}(u)du=\int_N\int_X \Phi(xu,x)dx \widehat{\phi}(u)du
\end{align}
where
$$\displaystyle \widehat{\phi}(u)=\int_T \phi(a) \psi_n(aua^{-1})^{-1}\delta_B(a)da,\;\;\; u\in N.$$
It follows readily from \eqref{eq2 X tempered} and \eqref{eq 5 HCS} that the last expression in \eqref{eq 1bis JY TF geom} is absolutely convergent. By \eqref{int formula X}, we have
\[\begin{aligned}
\displaystyle \int_N\int_X \Phi(xu,x)dx \widehat{\phi}(u)du & =\int_N\int_{T_X}\int_N \Phi(tvu,tv)dv \delta_X(t)dt \widehat{\phi}(u)du \\
 & =\int_{T_X} \int_{N^2} \Phi(tvu,tv) \widehat{\phi}(u)dudv\delta_X(t) dt.
\end{aligned}\]
Set
$$\displaystyle O(t,\Phi)=\int_{N^2} \Phi(tu,tv)\psi_n(u)^{-1}\psi_n(v)dudv$$
for every $\Phi\in C_c^\infty(X\times X)$ and $t\in T_X$. The same arguments as for Lemma \ref{lem1 JY TF geom} show that this expression is absolutely convergent locally uniformly in $t$ and $\Phi$. Note that $O(t,\varphi_1\otimes \overline{\varphi_2})=O(t,\varphi_1) \overline{O(t,\varphi_2)}$ for every $t\in T_X$. Simple manipulations (which are justified by the absolute convergence of $O(t,\Phi)$ uniformly in $t$ and $\Phi$) show that
\[\begin{aligned}
\displaystyle \int_{N^2} \Phi(tvu,tv) \widehat{\phi}(u)dudv=\int_T \phi(a)\delta_B(a)^{-1} O(ta^{-1},R^\Delta(a)\Phi)da
\end{aligned}\]
for every $t\in T_X$. Thus, the above computations imply that the expression
\begin{align}\label{eq 2 JY TF geom}
\displaystyle \int_{T_X}\int_T \phi(a)\delta_B(a)^{-1} O(ta^{-1},R^\Delta(a)\Phi)da\delta_X(t)dt
\end{align}
is convergent as an iterated integral for every $\phi\in C_c^\infty(T)$ and $\Phi\in C_c^\infty(X\times X)$ and moreover that
\begin{align}\label{eq 3 JY TF geom}
\displaystyle J(\varphi_1,\varphi_2)=\int_{T_X} \int_T \phi(a)\delta_B(a)^{-1} O(ta^{-1},R^\Delta(a)\Phi)da\delta_X(t)dt
\end{align}
whenever $\varphi_1\otimes \overline{\varphi_2}=R^\Delta(\phi)\Phi$. The argument at the end of the proof of Theorem \ref{theo Kusgeom}, in particular for the claim \eqref{eq 6 Kusgeom}, adapts almost verbatim to this situation to show that \eqref{eq 2 JY TF geom} is actually absolutely convergent. (Here, we recall that, in the Archimedean case for any compact subset $L\subset X$, denoting by $C^\infty_L(X)$ the subspace of smooth functions supported in $L$, we have $C^\infty_L(X)\widehat{\otimes}C^\infty_L(X)\simeq C_{L\times L}^\infty(X\times X)$ \cite[Exemple 1, Chap. II \S 3 n.3]{Gro}). Using \eqref{eq 3 JY TF geom}, simple manipulations now allow to get the identity
$$\displaystyle J(\varphi_1,\varphi_2)=J_{\geom}(\varphi_1,\varphi_2)$$
and the fact that the expression defining $J_{\geom}(\varphi_1,\varphi_2)$ is absolutely convergent.
\end{proof}


\subsection{Spectral expansion}\label{section JY TF spec}

Recall from Section \ref{section FLO funct} that to every $\sigma\in \Temp(G')$ is associated a relative character $J_\sigma$ which is a functional on $C_c^\infty(X)$. For every $\varphi_1,\varphi_2\in C_c^\infty(X)$ we set
\begin{equation}\label{def Jspec}
\displaystyle J_{\spec}(\varphi_1,\varphi_2)=\int_{\Temp(G')} J_\sigma(\varphi_1)\overline{J_\sigma(\varphi_2)} d\mu_{G'}(\sigma)
\end{equation}
where $\mu_{G'}$ denotes the Plancherel measure of $G'$ (see Section \ref{section Kusspec}).

\begin{theo}\label{theo JY TF spec}
For every $\varphi_1,\varphi_2\in C_c^\infty(X)$, the expression defining $J_{\spec}(\varphi_1,\varphi_2)$ is absolutely convergent and we have
$$\displaystyle J(\varphi_1,\varphi_2)=J_{\spec}(\varphi_1,\varphi_2).$$
\end{theo}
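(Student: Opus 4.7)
The plan is to transfer the identity from $X$ to $G'$ via the Jacquet-Ye matching and then invoke the local Kuznetsov trace formula already proved in Theorem \ref{theo local Kuznetsov}. More precisely, I would start by applying Theorem \ref{theo JY transfer} to choose functions $f_1', f_2' \in C_c^\infty(G')$ matching $\varphi_1, \varphi_2$, i.e. $\varphi_i \leftrightarrow f_i'$ for $i=1,2$. By the very definition of the FLO functionals (Theorem \ref{theo FLO funct}), this immediately gives the spectral identification $J_\sigma(\varphi_i) = I_\sigma(f_i')$ for every $\sigma \in \Temp(G')$. Consequently, the integral defining $J_{\spec}(\varphi_1, \varphi_2)$ coincides term by term with that of $I_{\spec}(f_1', f_2')$, and in particular its absolute convergence is a free consequence of Theorem \ref{theo Kusspec}, while
$$J_{\spec}(\varphi_1, \varphi_2) = I_{\spec}(f_1', f_2').$$

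The second and more substantive step is to establish the matching of geometric sides
$$J_{\geom}(\varphi_1, \varphi_2) = I_{\geom}(f_1', f_2').$$
Here I would use that, by construction in \S\ref{S:symm X}, the isomorphism $T_X \simeq T'$ is measure-preserving and intertwines $\delta_X$ with $\delta_{B'}$. The Jacquet-Ye matching then gives the pointwise relation $\gamma(a) O(a, f_i') = O(t, \varphi_i)$, where $t \in T_X$ corresponds to $a \in T'$, up to a sign convention in the character $\psi_n$ that needs to be checked to be compatible with the product $O(t, \varphi_1)\overline{O(t, \varphi_2)}$ (this relies on the obvious symmetry $\varphi \leftrightarrow f' \Longleftrightarrow \overline{\varphi} \leftrightarrow \overline{f'}$ obtained by complex conjugation of the matching identity). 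Crucially, since $\gamma(a) = \prod_k \eta(a_k)^k$ takes values in $\{\pm 1\}$, the squared transfer factor $\gamma(a)^2 = 1$ disappears when one forms the product of orbital integrals, yielding equality of the integrands pointwise on $T_X \simeq T'$ and hence of the full geometric integrals.

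Combining these two pieces with Theorem \ref{theo JY TF geom} on the $X$ side and Theorem \ref{theo local Kuznetsov} on the $G'$ side, we obtain the chain
$$J(\varphi_1, \varphi_2) = J_{\geom}(\varphi_1, \varphi_2) = I_{\geom}(f_1', f_2') = I_{\spec}(f_1', f_2') = J_{\spec}(\varphi_1, \varphi_2),$$
which is the desired identity. The main technical point in this plan is really just the bookkeeping in the geometric comparison — harmonizing the slightly different conventions for orbital integrals that appear in Sections \ref{section Kuznetsov geom}, \ref{section JY transfer} and \ref{section JY TF geom} (they differ only by a sign of the character) — while all the genuinely analytic input (regularization of the oscillatory integrals, Plancherel convergence on $G'$, existence of the transfer, existence of the FLO functionals) is encapsulated in the results already invoked above.
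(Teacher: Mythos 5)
Your proposal is correct and follows essentially the same route as the paper: transfer $\varphi_i$ to matching $f_i'$, identify the geometric sides via the measure-preserving isomorphism $T_X\simeq T'$ and the cancellation $\gamma(a)^2=1$, identify the spectral sides via the defining property of the FLO functionals, and close the chain with Theorems \ref{theo JY TF geom} and \ref{theo local Kuznetsov}, with absolute convergence inherited from the $G'$ side. The sign-convention bookkeeping you flag for the orbital integrals is real but harmless, exactly as you describe.
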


\begin{proof}
Let $f_1,f_2\in C_c^\infty(G')$ be test functions matching $\varphi_1$, $\varphi_2$ respectively in the sense of Section \ref{section JY transfer}. By Theorem \ref{theo JY TF geom}, the definition of the transfer, and the fact that the isomorphism $T_X\simeq T'$ is measure preserving, we have
$$\displaystyle J(\varphi_1,\varphi_2)=\int_{T_X} O(t,\varphi_1)\overline{O(t,\varphi_2)} \delta_X(t)dt=\int_{T'} O(a,f_1) \overline{O(a,f_2)} \delta_{B'}(a) da$$
where the ``transfer factors'' disappear as $\gamma(a)^2=1$. By Theorem \ref{theo local Kuznetsov}, this last expression is equal to
\begin{align}\label{eq1 explicit spec exp}
\displaystyle \int_{\Temp(G')}I_\sigma(f_1)\overline{I_\sigma(f_2)} d\mu_{G'}(\sigma).
\end{align}
By definition of the FLO relative characters $J_\sigma$, this is further equal to
$$\displaystyle \int_{\Temp(G')}J_\sigma(\varphi_1)\overline{J_\sigma(\varphi_2)} d\mu_{G'}(\sigma)=J_{\spec}(\varphi_1,\varphi_2).$$
Moreover, as \eqref{eq1 explicit spec exp} is absolutely convergent (by Theorem \ref{theo Kusgeom}), the above expression is also convergent and this proves the theorem.
\end{proof}

From Theorem \ref{theo JY TF spec} and Theorem \ref{theo JY TF geom}, we deduce:

\begin{theo}[Local Kuznetsov trace formula for $X$]\label{theo local TF for X}
For every $\varphi_1,\varphi_2\in C_c^\infty(X)$, we have
$$\displaystyle J_{\geom}(\varphi_1,\varphi_2)=J_{\spec}(\varphi_1,\varphi_2).$$
\end{theo}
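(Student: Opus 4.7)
The statement is the immediate synthesis of Theorems \ref{theo JY TF geom} and \ref{theo JY TF spec}, both of which exhibit the auxiliary sesquilinear form $J(\varphi_1,\varphi_2)$ defined in \eqref{def J} as equal to one of the two sides of the desired identity. My plan is therefore simply to concatenate these two identities: for any $\varphi_1,\varphi_2\in C_c^\infty(X)$, Theorem \ref{theo JY TF geom} gives $J_{\geom}(\varphi_1,\varphi_2)=J(\varphi_1,\varphi_2)$, while Theorem \ref{theo JY TF spec} gives $J(\varphi_1,\varphi_2)=J_{\spec}(\varphi_1,\varphi_2)$, and the desired equality follows.

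In this sense the theorem itself requires no further argument; all of the substantive work has been carried out in the two preceding results. It is still worth recording where that work lay, since the whole raison d'\^etre of the formula is the asymmetry between the two expansions. The geometric expansion rests on a careful analysis of the $(N,\psi_n)$-regularized integral: writing $\varphi_1\otimes\overline{\varphi_2}=R^\Delta(\phi)\Phi$ (trivially in the $p$-adic case by choosing $\phi$ to be a rescaled characteristic function of a compact-open subgroup of $T$), applying \eqref{eq 4 HCS} to convert the regularized integral into an absolutely convergent one, introducing the torus via the integration formula \eqref{int formula X}, and finally establishing absolute convergence by a Cauchy--Schwarz reduction to the positive case $\varphi_1=\varphi_2$.

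The spectral expansion, by contrast, is where one genuinely uses the deep results of \cite{Jac} and \cite{FLO}. The argument starts from the geometric side: one invokes the Jacquet--Aizenbud--Gourevitch transfer (Theorem \ref{theo JY transfer}) to produce $f_1,f_2\in C_c^\infty(G')$ matching $\varphi_1,\varphi_2$, and then the identity $O(t,\varphi_i)=\gamma(a)O(a,f_i)$ together with the vanishing of $\gamma(a)^2-1$ translates the geometric side on $X$ into the geometric side of the ordinary Kuznetsov formula for $G'$. One then applies Theorem \ref{theo local Kuznetsov} to pass to the spectral side for $G'$, and finally invokes the defining property of the FLO functionals (Theorem \ref{theo FLO funct}) to convert each Bessel distribution $I_\sigma(f_i)$ into the FLO relative character $J_\sigma(\varphi_i)$.

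Consequently, the only real ``obstacle'' in the proof of Theorem \ref{theo local TF for X} is one that is imported wholesale from the literature, namely the existence of the Jacquet-Ye transfer and the FLO characterization of the functionals $\alpha^\sigma$; once these are admitted, the local trace formula identity drops out as an immediate corollary of the two preceding theorems.
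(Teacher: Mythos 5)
Your proposal is correct and is exactly the paper's argument: Theorem \ref{theo local TF for X} is stated there as an immediate consequence of Theorems \ref{theo JY TF geom} and \ref{theo JY TF spec}, both of which identify their respective sides with the common quantity $J(\varphi_1,\varphi_2)$ of \eqref{def J}. Your accompanying summary of where the substantive work lies (the regularized-integral analysis for the geometric expansion, and the Jacquet--Ye transfer, the Kuznetsov formula for $G'$, and the FLO functionals for the spectral one) also matches the paper.
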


Assume now that $F$ is a $p$-adic field. By Proposition \ref{prop HCS}(i), the definition \eqref{def J} of $J(\varphi_1,\varphi_2)$ extends to any $\varphi_1,\varphi_2\in \cC(X)$ and moreover, $J$ is a separately continuous Hermitian form on $\cC(X)$.  On the other hand, by Lemma \ref{lem1 abstract rc} the FLO relative characters $J_\sigma$, $\sigma\in \Temp(G')$, extend by continuity to $\cC(X)$. Hence, the definition \eqref{def Jspec} of $J_{\spec}(\varphi_1,\varphi_2)$ still makes sense, formally, for every $\varphi_1,\varphi_2\in \cC(X)$. In this context, Theorem \ref{theo JY TF spec} admits the following extension.

\begin{theo}\label{theo JY TF spec2}
For every $\varphi_1,\varphi_2\in \cC(X)$, the expression defining $J_{\spec}(\varphi_1,\varphi_2)$ is absolutely convergent and we have
$$\displaystyle J(\varphi_1,\varphi_2)=J_{\spec}(\varphi_1,\varphi_2).$$
\end{theo}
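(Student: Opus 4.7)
The plan is to extend the identity from $C_c^\infty(X)$ to $\cC(X)$ by density, using an isometric embedding into $L^2(\Temp(G'),d\mu_{G'})$. The first ingredient is that, for $\varphi\in C_c^\infty(X)$, the already established Theorem \ref{theo JY TF spec} specialized to $\varphi_1=\varphi_2=\varphi$ reads
\[
J(\varphi,\varphi)=\int_{\Temp(G')}\lvert J_\sigma(\varphi)\rvert^2\,d\mu_{G'}(\sigma)\geqslant 0,
\]
so that $J$ is a positive semi-definite Hermitian form on $C_c^\infty(X)$ and the map $T:\varphi\mapsto(\sigma\mapsto J_\sigma(\varphi))$ is an isometry from $(C_c^\infty(X),J)$ into $L^2(\Temp(G'),d\mu_{G'})$. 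Since $J$ is separately continuous on $\cC(X)$ and each subspace $\cC(X)^{J_0}$ (for $J_0\subset G$ a compact-open subgroup) is Fr\'echet, the Banach--Steinhaus argument gives joint sequential continuity of $J$ on bounded subsets of $\cC(X)$. In particular, $J$ extends to a positive semi-definite Hermitian form on all of $\cC(X)$.

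Given $\varphi\in\cC(X)$, I would use the density statement \eqref{density HCS} to pick a sequence $(\varphi^{(k)})$ in $C_c^\infty(X)$ converging to $\varphi$ in $\cC(X)$. Joint continuity of $J$ then implies
\[
\lVert T\varphi^{(k)}-T\varphi^{(l)}\rVert_{L^2}^2=J(\varphi^{(k)}-\varphi^{(l)},\varphi^{(k)}-\varphi^{(l)})\longrightarrow 0,
\]
so $(T\varphi^{(k)})$ is Cauchy in $L^2(\Temp(G'),d\mu_{G'})$ and converges to some function $T\varphi$ with $\lVert T\varphi\rVert_{L^2}^2=J(\varphi,\varphi)$. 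To identify $T\varphi(\sigma)$ with $J_\sigma(\varphi)$, I invoke Lemma \ref{lem1 abstract rc}, which provides a continuous extension of each $J_\sigma$ to $\cC(X)$, so $J_\sigma(\varphi^{(k)})\to J_\sigma(\varphi)$ pointwise in $\sigma$. Since $L^2$-convergence implies almost everywhere convergence along a subsequence, we obtain $T\varphi(\sigma)=J_\sigma(\varphi)$ for $\mu_{G'}$-a.e. $\sigma$, which yields both the measurability and the $L^2$-integrability of $\sigma\mapsto J_\sigma(\varphi)$, together with the diagonal identity $J(\varphi,\varphi)=J_{\spec}(\varphi,\varphi)$.

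For arbitrary $\varphi_1,\varphi_2\in\cC(X)$, absolute convergence of $J_{\spec}(\varphi_1,\varphi_2)$ follows immediately from Cauchy--Schwarz in $L^2(\Temp(G'),d\mu_{G'})$ combined with the previous step. The full identity $J(\varphi_1,\varphi_2)=J_{\spec}(\varphi_1,\varphi_2)$ then comes either by polarization or, more directly, by passing to the limit in the equality $J(\varphi_1^{(k)},\varphi_2^{(k)})=J_{\spec}(\varphi_1^{(k)},\varphi_2^{(k)})=\langle T\varphi_1^{(k)},T\varphi_2^{(k)}\rangle_{L^2}$ using joint continuity on the left and $L^2$-continuity of the inner product on the right.

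The main obstacle is the crux of Step 2: reconciling the $L^2$-limit $T\varphi\in L^2(\Temp(G'),d\mu_{G'})$ with the pointwise prescription $\sigma\mapsto J_\sigma(\varphi)$. Without the continuity of individual $J_\sigma$ on $\cC(X)$ supplied by Lemma \ref{lem1 abstract rc}, the $L^2$-limit could a priori disagree with $J_\sigma(\varphi)$ on a set of positive measure; it is precisely the combination of the global isometric estimate (coming from $C_c^\infty(X)$ and Theorem \ref{theo JY TF spec}) with the pointwise continuity of $J_\sigma$ that forces agreement. Everything else reduces to standard density and Hilbert space arguments.
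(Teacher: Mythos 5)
Your argument is correct and follows essentially the same route as the paper: approximate $\varphi$ by functions in $C_c^\infty(X)$, use joint continuity of $J$ on the Fr\'echet spaces $\cC(X)^{J_0}$ (separate continuity of bilinear forms on Fr\'echet spaces implies continuity) together with the continuity of each $J_\sigma$ supplied by Lemma \ref{lem1 abstract rc}, and transfer the diagonal identity of Theorem \ref{theo JY TF spec} before concluding by Cauchy--Schwarz and polarization. The only cosmetic difference is the limiting device: the paper applies Fatou's lemma to get $\int_{\Temp(G')}\lvert J_\sigma(\varphi)\rvert^2\,d\mu_{G'}(\sigma)\leqslant J(\varphi,\varphi)$ and then finishes by continuity and density, whereas you use completeness of $L^2(\Temp(G'),d\mu_{G'})$ and almost-everywhere convergence along a subsequence to identify the limit directly; both are standard and interchangeable here.
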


\begin{proof}
Let $J\subset G$ be a compact-open subgroup and $\varphi\in \cC(X)^J$. Let $(\varphi_k)_{k\geqslant 1}$ a sequence in $C_c^\infty(X)^J$ converging to $\varphi$ in $\cC(X)^J$ (such sequence exists by \eqref{density HCS}). Since separately continuous bilinear forms on FR\'echet spaces are automatically continuous \cite[Corollary 34.2]{Tr}, by Theorem \ref{theo JY TF spec} and the continuity of $J$ we deduce that the sequence
$$\displaystyle J(\varphi_k,\varphi_k)=\int_{\Temp(G')} \lvert J_\sigma(\varphi_k)\rvert^2d\mu_{G'}(\sigma)$$
converges to $J(\varphi,\varphi)$. Hence, by Fatou's lemma and the continuity of $J_\sigma$ on $\cC(X)$, the integral
$$\displaystyle \int_{\Temp(G')} \lvert J_\sigma(\varphi)\rvert^2d\mu_{G'}(\sigma)$$
converges and is bounded by $J(\varphi,\varphi)$. By Cauchy-Schwarz, it follows that $J_{\spec}(\varphi_1,\varphi_2)$ is absolutely convergent and defines a continuous sesquilinear form on $\cC(X)^J$. The theorem follows by the continuity of $J$ and the density of $C_c^\infty(X)$ in $\cC(X)$.
\end{proof}

\section{Multiplicities}\label{Chapter multiplicities}

In this chapter, we keep the notation introduced in the Chapters \ref{chapter invt funct} and \ref{chapter JY TF} and we moreover assume that:
$$F \mbox{ is a } p-\mbox{adic field.}$$
The goal of this chapter is to complement results of Feigon-Lapid-Offen on the computations of the {\em multiplicity}
$$\displaystyle m(\pi)=\dim \Hom_G(\pi,C^\infty(X))$$
for $\pi\in \Irr(G)$ {\em generic}. This multiplicity is always finite by a general result of Delorme \cite[Theorem 4.5]{Del} and naturally decomposes as a sum over $V\in \cV$ of individual multiplicities
$$\displaystyle m_V(\pi)=\dim \Hom_G(\pi, C^\infty(U(V)\backslash G))=\dim \Hom_{U(V)}(\pi,\mathbb{C})$$
where the last equality follows from Frobenius reciprocity.

In \cite[Theorem 0.2]{FLO}, Feigon, Lapid and Offen gives a lower bound for $m_V(\pi)$ in terms of the (cardinality of the) general fibers of Arthur and Clozel's base-change map $\BC:\Irr(G')\to \Irr(G)$ \cite{AC}. They moreover show that this lower bound is actually equal to the multiplicity when $\BC$ is ``unramified at $\pi$'' (in a sense that will be made precise in the next section). The new result obtained here is that equality {\em always} holds as conjectured by Feigon-Lapid-Offen \cite[Conjecture 13.17]{FLO}. The main ingredients entering into the proof are the local trace formula for $X$ developed in the last chapter as well as the scalar Whittaker-Paley-Wiener theorem of Section \ref{section PW theorem} for the group $G'$. 

In order to state the main result in the appropriate context, in Section \ref{section structure of alg var} we explain how to endow $\Irr(G)$ and $\Irr(G')$ with natural structures of algebraic varieties and we study related properties of the base-change map $\BC$ and the map $\lambda$ associating to an irreducible representation its cuspidal support. Using these extra structures, we state in Section \ref{section result multiplicities} the main result whose proof occupies Sections \ref{section first step multiplicities} to \ref{section end of proof multiplicities}. More precisely, in Section \ref{section first step multiplicities}, we make a reduction to tempered representations following \cite[\S 6]{FLO}. In Section \ref{section second step multiplicities}, we relate the multiplicity $m(\pi)$ to the FLO functionals of Section \ref{section FLO funct} via the local trace formula developed in the previous chapter. Once this relation is establish, the theorem readily follows from the scalar Whittaker Paley-Wiener theorem and the necessary arguments are given in Section \ref{section end of proof multiplicities}.

Here is a list of notation and conventions that we shall use in this chapter (besides the one introduced in previous sections):
\begin{itemize}
\item A {\em semi-standard Levi} of $G$ (resp. $G'$) means a Levi subgroup containing $T$ (resp. $T'$). Similarly, a {\em standard parabolic subgroup} of $G$ (resp. $G'$) is a parabolic subgroup containing $B$ (resp. $B'$) and a standard Levi subgroup is the unique semi-standard Levi component of a standard parabolic subgroup.

\item For $M$ a Levi subgroup of $G$ or $G'$, we denote by $X(M)$, $X_{\unit}(M)$, $X_{\unr}(M)$ and $X_{\alg}(M)$ the groups of smooth, unitary, unramified and algebraic (defined over $F$) characters of $M$ respectively. Recall that $X_{\unr}(M)$ is a complex torus whose index in $X(M)$ is countable. Therefore, $X(M)$ has a natural structure of algebraic variety over $\bC$ (with countably many components). We set $\cA_M^*=X_{\alg}(M)\otimes \bR$. There is an injective homomorphism $\cA_M^*\to X(M)$ sending $\lambda\otimes x$ to the character $m\in M\mapsto \lvert \lambda(m)\rvert_F^x$. The image of this homomorphism is the subgroup of positive valued characters of $M$. Therefore, if $\chi\in X(M)$, its absolute value $\lvert \chi\rvert$ corresponds to an element of $\cA_M^*$ that we denote by $\Re(\chi)$. More generally, if $\sigma$ is an irreducible smooth representation of $M$ with central character $\omega_\sigma$, $\lvert \omega_\sigma\rvert$ extends uniquely to a positive valued character of $M$ and we set $\Re(\sigma)=\Re(\lvert \omega_\sigma\rvert)$.

\item  If $L\subset M$ is another Levi subgroup, there is a natural inclusion $\cA_M^*\subset \cA_L^*$ with a natural section $\cA_L^*\twoheadrightarrow \cA_M^*$ whose kernel we denote by $(\cA_L^M)^*$. The inclusion $T'\subset T$ induces an identification $\cA_{T'}^*=\cA_T^*$ and we just write $\cA^*$ for this real vector space.

\item Still for $M$ a Levi subgroup of $G$ (resp. of $G'$), we set $W(G,M)=\Norm_G(M)/M$ (resp. $W(G',M)=\Norm_{G'}(M)/M$) for the corresponding Weyl group and $W^M=W(M,T)$ (resp. $W^{M}=W(M,T')$) for the Weyl group of $T$ (resp. $T'$) in $M$. Then, $W(G,M)$ acts naturally on $\cA_M^*$. We have again a natural identification $W^{G'}=W^G$ and we simply write $W$ for this Weyl group. We fix on $\cA^*$ a $W$-invariant Euclidean norm $\lVert .\rVert$. Note that for every pair $L\subset M$ of semi-standard Levi subgroups, the subspaces $\cA_L^*$ and $(\cA_L^M)^*$ are orthogonal for the resulting Euclidean structure.

\item We denote by $\Irr(G)$ (resp. $\Irr(G')$) the set of isomorphism classes of smooth irreducible representations of $G$ (resp. $G'$) and by $\Irr^{\gen}(G)$, $\Temp(G)$, $\Pi_2(G)$, $\Pi_{2,\ess}(G)$, $\Pi_{\cusp}(G)$ (resp. $\Irr^{\gen}(G')$, $\Temp(G')$, $\Pi_2(G')$, $\Pi_{2,\ess}(G')$, $\Pi_{\cusp}(G')$) the subsets of generic, tempered, square-integrabl, essentially square-integrable and supercuspidal irreducible representations respectively.

\item If $P=MU$ is a parabolic subgroup of $G$ and $\tau$ a smooth representation of $M$, we denote by $I_P^G(\tau)$ the smooth unitarily normalized parabolic induction of $\tau$. If moreover $P$ is standard and $M$ decomposes in diagonal blocks as
$$\displaystyle M=\GL_{n_1}(E)\times \ldots \times \GL_{n_k}(E)$$
and $\tau$ is of the form $\tau=\tau_1\boxtimes\ldots\boxtimes \tau_k$ and we write
$$\displaystyle \tau_1\times\ldots\times \tau_k$$
for $I_P^G(\tau)$. Similar notation apply to representations of $G'$.

\end{itemize}

\subsection{Algebraic structure on $\Irr(G)$, the Bernstein center and base-change}\label{section structure of alg var}

Let $\Sqr(G)$ be the set of pairs $(M,\sigma)$ where $M$ is a semi-standard Levi of $G$ and $\sigma\in \Pi_{2,\ess}(M)$ is an irreducible essentially square-integrable representation of $M$. We equip $\Sqr(G)$ with its unique structure of algebraic variety over $\bC$ (with infinitely many components) such that for every $(M,\sigma)\in \Sqr(G)$, the map
$$\displaystyle X_{\unr}(M)\to \Sqr(G),\; \chi\mapsto (M,\sigma\otimes \chi)$$
is a finite covering over a connected component of $\Sqr(G)$. The Weyl group $W$ is acting on $\Sqr(G)$ by regular automorphisms and we denote by $\Sqr(G)/W$ the GIT quotient. By the special form of the Levi subgroups of $G$ and their associated Weyl groups, the connected components of $\Sqr(G)/W$ are all isomorphic to products of varieties of the form $(\bC^\times)^t/\mathfrak{S}_t$ where $\mathfrak{S}_t$ acts on $(\bC^\times)^t$ by permutation of the entries. This implies in particular that $\Sqr(G)/W$ is smooth.

To $(M,\sigma)\in \Sqr(G)$ we associate the unique irreducible quotient of $I_P^G(\sigma)$ where $P$ is any parabolic subgroup with Levi component $M$ such that $\Re(\sigma)$ is (non-strictly) dominant with respect to $P$. By the Langlands classification this induces a bijection $\Sqr(G)/W\simeq \Irr(G)$ and we use this bijection to transfer the structure of algebraic variety on $\Sqr(G)/W$ to $\Irr(G)$.

We will use this bijection to identify $\Sqr(G)/W$ and $\Irr(G)$, thus for $(M,\sigma)\in \Sqr(G)$ its image $[M,\sigma]\in \Sqr(G)/W$ is identified with the corresponding Langlands quotient in $\Irr(G)$. Also, for $(M,\sigma)\in \Sqr(G)$ we will write $\Irr_{M,\sigma}(G)$ for the image in $\Irr(G)$ of the subset
$$\displaystyle \left\{(M,\sigma\otimes \chi)\mid \chi\in X(M) \right\}$$
of $\Sqr(G)$. Setting
$$\displaystyle W'_\sigma=\{(\chi,w)\in X(M)\rtimes W(G,M)\mid w\sigma\simeq \sigma\otimes \chi  \}$$
(a finite group) the map $\chi\in X(M)\mapsto [M,\sigma\otimes \chi]$ induces a regular isomorphism $X(M)/W'_\sigma\simeq \Irr_{M,\sigma}(G)$. We emphasize here that, as $X(M)$ stands for the group of all {\em smooth} characters of $M$ (not necessarily unramified), $\Irr_{M,\sigma}(G)$ is only a countable union of connected components of $\Irr(G)$.

For $(M,\sigma)\in \Sqr(G)$, we also set
$$\displaystyle \Temp_{M,\sigma}(G)=\Irr_{M,\sigma}(G)\cap \Temp(G) \mbox{ and } \Irr^{\gen}_{M,\sigma}(G)=\Irr_{M,\sigma}(G)\cap \Irr^{\gen}(G).$$
Assuming that $\sigma$ is square-integrable (which we may up to a twist), $\Temp_{M,\sigma}(G)$ is the image of $X_{\unit}(M)$ by the surjective regular map $X(M)\to \Irr_{M,\sigma}(G)$, $\chi\mapsto [M,\sigma\otimes \chi]$. Since $X_{\unit}(M)$ is Zariski dense in $X(M)$ this shows:
\begin{equation}\label{density tempered}
\displaystyle \Temp(G) \mbox{ is Zariski-dense in } \Irr(G).
\end{equation}

Let $\pi=[M,\sigma]\in \Irr^{\gen}(G)$. Then, for every parabolic subgroup $P$ with Levi component $M$ we have $\pi\simeq I_P^G(\sigma)$ \cite[Theorem 9.7]{Ze}. Conversely, if $[M,\sigma]\in \Sqr(G)/W$ is such that for one parabolic subgroup $P$ with Levi component $M$, $I_P^G(\sigma)$ is irreducible then its image in $\Irr(G)$ is generic. Therefore, by \cite[Proposition VI.8.4]{Ren} we have
\begin{equation}\label{generic Zariski open}
\Irr^{\gen}(G)\mbox{ is Zariski open in }\Irr(G).
\end{equation}

Let $\cZ(G)$ be the ``finite'' Bernstein center (as defined in Section \ref{section PW theorem}) and let $\cB(G)$ be its maximal spectrum  which is an algebraic variety over $\bC$. Then, we have an identification $\cB(G)\simeq \Cusp(G)/W$ of algebraic varieties where $\Cusp(G)$ is the set of pairs $(L,\tau)$ with $L$ a semi-standard Levi subgroup and $\tau\in \Pi_{\cusp}(L)$ (the isomorphism class of) an irreducible supercuspidal representation of $L$ that we endow with a structure of algebraic variety the same way we did for $\Sqr(G)$. For $(L,\tau)\in \Cusp(G)$, we denote by $\cB_{L,\tau}(G)$ the subset
$$\displaystyle \left\{[L,\tau\otimes \chi]\mid \chi\in X(L) \right\}$$
of $\cB(G)$. As before, $\cB_{L,\tau}(G)$ is an union of connected component and the map $X(L)\to \cB_{L,\tau}(G)$, $\chi\mapsto [L,\tau\otimes \chi]$ induces an isomorphism $X(L)/W'_\tau\simeq \cB_{L,\tau}(G)$.

The natural inclusion $\Cusp(G)\subset \Sqr(G)$ descends to an open-closed immersion $\cB(G)\hookrightarrow \Irr(G)$ and in particular $\cB(G)$ is also smooth.
This embedding admits a left-inverse
$$\displaystyle\lambda: \Irr(G)\to \cB(G)$$
which associates to $\pi\in \Irr(G)$ its supercuspidal support (i.e. the unique element $[L,\tau]\in \cB(G)$ such that $\pi$ is a subquotient of $I_Q^G(\tau)$ for one, or equivalently every, parabolic with Levi component $L$).

\begin{lem}\label{lem finite morphism}
$\lambda$ is a regular finite morphism.
\end{lem}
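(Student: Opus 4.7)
The plan is to verify regularity and finiteness on each connected component of $\Irr(G)$. Fix $(M, \sigma) \in \Sqr(G)$, so that $\Irr_{M, \sigma}(G) \simeq X(M)/W'_\sigma$ is a typical component, and let $(L, \tau_0) \in \Cusp(G)$ be a representative of the cuspidal support of $\sigma$ viewed as a representation of $M$. By transitivity of parabolic induction, for every $\chi \in X(M)$ the Langlands quotient $[M, \sigma \otimes \chi]$ has cuspidal support $[L, \tau_0 \otimes (\chi|_L)] \in \cB_{L, \tau_0}(G) \simeq X(L)/W'_{\tau_0}$. Thus, under these parametrizations, the restriction of $\lambda$ to $\Irr_{M, \sigma}(G)$ is induced by the character-restriction map
$$\mathrm{res}^M_L : X(M) \to X(L), \qquad \chi \mapsto \chi|_L,$$
which is visibly a regular homomorphism of algebraic varieties.

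The first step is to check that the composition $X(M) \xrightarrow{\mathrm{res}^M_L} X(L) \twoheadrightarrow X(L)/W'_{\tau_0}$ is $W'_\sigma$-invariant, so that it descends to a regular map on the GIT quotients. If $(\chi_0, w) \in W'_\sigma$, the relation $w\sigma \simeq \sigma \otimes \chi_0$ forces, after comparing cuspidal supports inside $M$, the existence of an element $m \in M$ such that $m w L w^{-1} m^{-1} = L$ and $(mw)\tau_0 \simeq \tau_0 \otimes \chi_0|_L$. The class of $mw$ in $W(G,L)$, paired with $\chi_0|_L$, then belongs to $W'_{\tau_0}$ and realizes the action of $(\chi_0, w)$ on $X(M)$ at the level of $X(L)/W'_{\tau_0}$, yielding the required invariance.

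The second step is to show that $\mathrm{res}^M_L$ is a closed immersion. Writing $M = \prod_i \GL_{n_i}(E)$ and $L = \prod_{(i,j)} \GL_{m_{i,j}}(E)$ with $\sum_j m_{i,j} = n_i$, every smooth character of $M$ has the form $\chi = \prod_i \alpha_i \circ \det_{n_i}$ for uniquely determined smooth characters $\alpha_i$ of $E^\times$; its restriction to $L$ equals $\prod_{(i,j)} \alpha_i \circ \det_{m_{i,j}}$, which clearly determines all the $\alpha_i$. Hence $\mathrm{res}^M_L$ is injective; being a homomorphism of algebraic varieties, it is in fact a closed immersion.

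Since closed immersions are finite morphisms, the composition $X(M) \to X(L)/W'_{\tau_0}$ is finite, so that $\bC[X(M)]$ is a finitely generated $\bC[X(L)]^{W'_{\tau_0}}$-module. The subring $\bC[X(M)]^{W'_\sigma}$ is then itself a finitely generated $\bC[X(L)]^{W'_{\tau_0}}$-module by Noetherianity, which translates into finiteness of the descended morphism $\Irr_{M, \sigma}(G) \to \cB_{L, \tau_0}(G)$. Applying this on every connected component of $\Irr(G)$ yields the lemma. The main technical obstacle is the verification of the $W'_\sigma$-invariance in the first step, which requires carefully unraveling the action of Weyl group elements on cuspidal supports together with the associated character twists; the remaining parts reduce to standard facts in commutative algebra and the structure of algebraic groups.
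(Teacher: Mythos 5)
Your proof is correct and follows essentially the same route as the paper: reduce to a component $\Irr_{M,\sigma}(G)$, identify $\lambda$ there with the character-restriction map $X(M)\to X(L)$, observe that this is a closed immersion (hence finite), and descend to the GIT quotients. You simply spell out two points the paper leaves implicit, namely the $W'_\sigma$-equivariance needed for the descent and the commutative-algebra argument showing that finiteness passes to the quotient.
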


\begin{proof}
Let $(M,\sigma)\in \Sqr(G)$. It suffices to show that the restriction of $\lambda$ to $\Irr_{M,\sigma}(G)$ is regular and finite. Choose $(L,\tau)\in \Cusp(G)$ in the cuspidal support of $\sigma$. We have a commutative diagram
$$\displaystyle \xymatrix{X(M) \ar[r]^{\Res} \ar[d] & X(L) \ar[d] \\ X(M)/W'_\sigma\simeq \Irr_{M,\sigma}(G) \ar[r]^{\lambda} & \cB_{L,\tau}(G)\simeq X(L)/W'_\tau}$$
where the two vertical maps are $\chi\mapsto [M,\sigma\otimes \chi]$ and $\chi\mapsto [L,\tau\otimes \chi]$ respectively. Moreover, the restriction map $\Res: X(M)\to X(L)$ is a closed immersion and in particular finite. By the universal property of GIT quotients, the bottom map is therefore regular and finite.
\end{proof}

Let $(M,\sigma)\in\Sqr(G)$. We denote by $\Irr_{M,\sigma}(G)^\lambda$ and $\Irr^{\gen}_{M,\sigma}(G)^\lambda$ the respective images of $\Irr_{M,\sigma}(G)$ and $\Irr^{\gen}_{M,\sigma}(G)$ by $\lambda$. By the previous lemma, $\Irr_{M,\sigma}(G)^\lambda$ is closed in $\cB(G)$.

\begin{prop}\label{prop local isom lambda}
$\Irr^{\gen}_{M,\sigma}(G)^\lambda$ is open in $\Irr_{M,\sigma}(G)^\lambda$ and $\lambda: \Irr_{M,\sigma}(G)\to \Irr_{M,\sigma}(G)^\lambda$ restricts to an isomorphism over $\Irr^{\gen}_{M,\sigma}(G)^\lambda$.
\end{prop}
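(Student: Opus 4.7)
The plan is to proceed in three steps, the second being the technical core. First, I would show that $\lambda$ restricted to $\Irr^{\gen}_{M,\sigma}(G)$ is injective. For any $[L,\tau]\in \cB(G)$ the parabolic induction $I_Q^G(\tau)$ admits at most one generic irreducible subquotient (by uniqueness of the Whittaker functional on generic representations). Hence two generic representations with the same cuspidal support must coincide.

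Second, I would establish the key saturation property: if $\pi = [M,\sigma\otimes\chi]\in \Irr^{\gen}_{M,\sigma}(G)$ and $\pi' = [M,\sigma\otimes\chi']\in \Irr_{M,\sigma}(G)$ satisfy $\lambda(\pi)=\lambda(\pi')$, then $\pi'$ is also generic (and hence $\pi'=\pi$ by Step 1). Writing $\sigma = \otimes_i \delta(\Delta_i)$ with the $\Delta_i$'s the segments cut out by the block decomposition of $M$, genericity of $\pi$ (respectively $\pi'$) is equivalent, by the Bernstein-Zelevinsky theory for $\GL_n$, to the segments $\Delta_i\otimes\chi_i$ (respectively $\Delta_i\otimes\chi'_i$) being pairwise unlinked. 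The equality $\lambda(\pi)=\lambda(\pi')$ says that the cuspidal supports $\tau\otimes\chi|_L$ and $\tau\otimes\chi'|_L$ agree as points of $\cB(G)$, i.e., as multisets of cuspidal representations. Thus the underlying multisegments of $\sigma\otimes\chi$ and $\sigma\otimes\chi'$ have the same shape (dictated by $\sigma$) and the same cuspidal support, and only differ in how the cuspidals are grouped into segments. The core combinatorial claim, which I would prove by a careful case analysis on the $\nu$-adic structure of the support on each cuspidal line, is that whether a multisegment of fixed shape and fixed cuspidal support is linked depends only on the shape and the support, not on the choice of grouping. This combinatorial statement is the main obstacle of the proof.

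Third, given Steps 1 and 2, openness and the isomorphism are formal consequences. Since $\lambda$ is finite by Lemma~\ref{lem finite morphism} it is closed, so $\lambda(\Irr_{M,\sigma}(G)\setminus \Irr^{\gen}_{M,\sigma}(G))$ is closed in $\Irr_{M,\sigma}(G)^\lambda$; by Step 2 this closed set is exactly the complement of $\Irr^{\gen}_{M,\sigma}(G)^\lambda$, whence openness. The restriction $\lambda\colon \Irr^{\gen}_{M,\sigma}(G)\to \Irr^{\gen}_{M,\sigma}(G)^\lambda$ is then finite and bijective, with smooth source. To promote this to an isomorphism of algebraic varieties, I would verify unramifiedness: the differential of $\lambda$ at a point is induced by the restriction map $X(M)\to X(L)$ on character groups, which is a closed immersion (each factor of $L$ meets the corresponding factor of $M$ in a subset from which any character can be reconstructed), hence injective on tangent spaces. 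A finite bijective unramified morphism of complex algebraic varieties with smooth source is a closed immersion onto its image, yielding the desired isomorphism.
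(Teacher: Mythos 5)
Your Step 1 and your deduction of openness from finiteness of $\lambda$ are fine, and your Step 2 targets the right statement (the paper's \eqref{eq1 immersion}); but note that you leave its "core combinatorial claim" unproved, whereas the paper sidesteps the multisegment combinatorics entirely: since there is a unique generic irreducible representation with a given cuspidal support, $\pi=I_P^G(\sigma\otimes\chi)$ must occur as a subquotient of $I_P^G(\sigma\otimes\chi')$, and a comparison (via the geometric lemma) of the lengths of the supercuspidal parts of the Jacquet modules of the two induced representations then shows that a second constituent $\pi'\neq\pi$ would have vanishing supercuspidal Jacquet modules, which is impossible. Your combinatorial route is probably viable, but as written it is an announcement rather than a proof of the key step.

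The genuine gap is in your unramifiedness argument. The map whose differential must be controlled is not $\Res\colon X(M)\to X(L)$ but the induced map on quotients $X_{\unr}(M)/W^0_{\sigma_0}\to X_{\unr}(L)/W^0_{\tau}$, where $W^0_{\sigma_0}$, $W^0_\tau$ are the stabilizers of $\sigma_0$ and of its cuspidal support $\tau$; the group $W^0_\tau$ is nontrivial as soon as two cuspidals in the support coincide (which happens whenever two twisted segments overlap), so the computation takes place in symmetric-function coordinates and injectivity of $\Res$ on tangent vectors proves nothing. In fact your argument, taken at face value, would give unramifiedness at \emph{every} point of $\Irr_{M,\sigma}(G)$, and that is false. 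For instance, let $\rho$ be a supercuspidal representation of $\GL_a(E)$ with $\rho\not\simeq\rho\lvert\det\rvert_E$, let $\delta$ be the essentially square-integrable representation attached to the segment $\{\rho,\rho\lvert\det\rvert_E\}$, and take $M=\GL_{2a}(E)^2\times\GL_a(E)^2$ with $\sigma_0=\delta\boxtimes\delta\boxtimes\rho\boxtimes\rho\lvert\det\rvert_E$ (non-generic, since the two singleton segments are linked). Then $W^0_{\sigma_0}\simeq\mathfrak{S}_2$, $W^0_\tau\simeq\mathfrak{S}_3\times\mathfrak{S}_3$, and the map on quotients is, up to harmless scalings,
$$\displaystyle (\{z_1,z_2\},z_3,z_4)\in (\bC^\times)^2/\mathfrak{S}_2\times(\bC^\times)^2\longmapsto \bigl(\{z_1,z_2,z_3\},\{z_1,z_2,z_4\}\bigr)\in (\bC^\times)^3/\mathfrak{S}_3\times(\bC^\times)^3/\mathfrak{S}_3.$$
Writing $e_1=z_1+z_2$, $e_2=z_1z_2$, the tangent vector $de_1=de_2=t$, $dz_3=dz_4=-t$ at the point $(\{1,1\},1,1)$ is killed by the differential (one checks $d(e_1+z_3)=d(e_2+e_1z_3)=d(e_2z_3)=0$ there, and likewise for the second factor), so the map is ramified. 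The unlinked (generic) configuration of segments is precisely what rules this out, and establishing unramifiedness at generic points is where the real work of the proof lies: one must choose, for each segment, a cuspidal member not contained in any strictly smaller segment of the family — a choice made possible by the unlinked condition — and then argue by induction that the resulting map between products of symmetric powers of $\bC^\times$ is a closed immersion.
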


\begin{proof}
Without loss in generality, we may assume that $\sigma\in \Pi_2(M)$. First we prove
\begin{equation}\label{eq1 immersion}
\mbox{For }\pi\in \Irr^{\gen}_{M,\sigma}(G)\mbox{ and }\pi'\in \Irr_{M,\sigma}(G)\mbox{ if }\lambda(\pi)=\lambda(\pi')\mbox{ then }\pi=\pi'.
\end{equation}

Indeed, let $\pi\in \Irr^{\gen}_{M,\sigma}(G)$ and $\pi'\in \Irr_{M,\sigma}(G)$ and assume that $\lambda(\pi)=\lambda(\pi')$. There exist $\chi,\chi'\in X(M)$ and a parabolic subgroup $P$ with Levi component $M$ such that $\pi=I_P^G(\sigma\otimes \chi)$ and $\pi'$ is the Langlands quotient of $I_P^G(\sigma\otimes \chi')$. Since $\sigma$ is generic, $I_P^G(\sigma\otimes \chi')$ admits an irreducible generic subquotient \cite[Th\'eor\`eme 4]{Rod} with the same cuspidal support as $\pi'$. As there is an unique irreducible generic representation with a given cuspidal support, this shows that $\pi=I_P^G(\sigma\otimes \chi)$ is a subquotient of $I_P^G(\sigma\otimes \chi')$. Moreover, it follows from the geometric lemma of Bernstein-Zelevinsky and Casselman (see \cite[Geometric Lemma]{BZ} and \cite[\S 6.3]{Cas1}) that for every parabolic subgroup $Q\subset G$ the length of the supercuspidal parts of the Jacquet modules $J_QI_P^G(\sigma\otimes \chi)$ and $J_QI_P^G(\sigma\otimes \chi')$ are the same. By exactness of the Jacquet functor $J_Q$, this shows that if $\pi'\neq \pi$ then the supercuspidal part of the Jacquet module $J_Q\pi'$ is zero for every parabolic subgroup $Q$ but this is impossible by \cite[lemme VI.7.2 (iii)]{Ren}. Therefore $\pi=\pi'$.


We now prove the proposition. As finite morphisms are closed, by \eqref{generic Zariski open}, Lemma \ref{lem finite morphism} and \eqref{eq1 immersion}, we see that $\Irr^{\gen}_{M,\sigma}(G)^\lambda$ is open in $\Irr_{M,\sigma}(G)^\lambda$ and moreover the restriction of $\lambda$ to $\Irr^{\gen}_{M,\sigma}(G)^\lambda$ is a finite bijective map $\Irr^{\gen}_{M,\sigma}(G)\to \Irr^{\gen}_{M,\sigma}(G)^\lambda$. Therefore, by \cite[\href{https://stacks.math.columbia.edu/tag/04XV}{Tag 04XV}]{stacks-project}, it only remains to check that $\lambda$ is unramified on $\Irr^{\gen}_{M,\sigma}(G)$.

Let $\chi_0\in X(M)$ be such that $[M,\sigma\otimes \chi_0]\in \Irr^{\gen}_{M,\sigma}(G)$ and $(L,\tau)\in \Cusp(G)$ be in the cuspidal support of $\sigma_0=\sigma\otimes \chi_0$. Let $W^0_{\sigma_0}\subset W(G,M)$ and $W^0_\tau\subset W(G,L)$ be the stabilizers of $\sigma_0$ and $\tau$ respectively. We have:
\begin{num}
\item\label{eq1 restriction} The restriction map $\Res:X_{\unr}(M)\to X_{\unr}(L)$ descends to a regular morphism
$$\displaystyle X_{\unr}(M)/W^0_{\sigma_0}\to X_{\unr}(L)/W^0_\tau.$$
\end{num}
Indeed, let $w\in W^0_{\sigma_0}$ and take any lift $\widetilde{w}\in \Norm_G(M)$. The pair $(\widetilde{w}L\widetilde{w}^{-1},\widetilde{w}\tau)$ is also in the cuspidal support of $\sigma_0$ and so, up to multiplying $\widetilde{w}$ by an element of $M$ we have $\widetilde{w}\in \Norm_G(L)$ and $\widetilde{w}\tau\simeq \tau$. Then, denoting by $w'$ the image of $\widetilde{w}$ in $W^0_\tau$, we have $\Res(w\chi)=w'\Res(\chi)$ for every $\chi\in X_{\unr}(M)$ and \eqref{eq1 restriction} follows.


The maps $\chi\in X_{\unr}(M)\mapsto [M,\sigma_0\otimes \chi]\in \Irr(G)$ and $\chi\in X_{\unr}(L)\mapsto [L,\tau\otimes \chi]\in \cB(G)$ descend to regular morphisms $X_{\unr}(M)/W^0_{\sigma_0}\to \Irr(G)$ and $X_{\unr}(L)/W^0_\tau\to \cB(G)$ which are local isomorphisms near $1$ and such that the following diagram commutes
$$\displaystyle \xymatrix{ X_{\unr}(M)/W^0_{\sigma_0} \ar[r] \ar[d] & X_{\unr}(L)/W^0_\tau \ar[d] \\ \Irr(G) \ar[r]^{\lambda} & \cB(G).}$$
Consequently, it only remains to prove that $X_{\unr}(M)/W^0_{\sigma_0}\to X_{\unr}(L)/W^0_\tau$ is unramified at $1$. Actually, we are going to show that this map is a closed immersion.

We may decompose $M$ as
$$\displaystyle M=\GL_{n_1}(E)\times \ldots\times \GL_{n_k}(E),$$
where $n_1,\ldots,n_k$ are positive integers such that $n_1+\ldots+n_k=n$, and we may accordingly decompose $\sigma_0$ as a tensor product
$$\displaystyle \sigma_0=\nu_1\boxtimes \ldots\boxtimes \nu_k$$
where, for each $1\leqslant i\leqslant k$, $\nu_i$ is an essentially square-integrable representation of $\GL_{n_i}(E)$. Let $\Sigma$ be the set of all isomorphism classes among $\nu_1,\ldots,\nu_k$ and for each $\nu\in \Sigma$ set
$$\displaystyle m(\nu)=\left\lvert \{1\leqslant i\leqslant k\mid \nu\simeq\nu_i \}\right\rvert.$$
Regrouping the $\nu_i$'s according to their isomorphism classes, we get an isomorphism $X_{\unr}(M)\simeq \prod_{\nu\in \Sigma} (\bC^\times)^{m(\nu)}$ which descends to an isomorphism
\begin{equation}\label{isom1}
\displaystyle X_{\unr}(M)/W_{\sigma_0}^0\simeq \prod_{\nu\in \Sigma} (\bC^\times)^{m(\nu)}/\mathfrak{S}_{m(\nu)}.
\end{equation}

According to the classification by Bernstein and Zelevinsky of the essentially square-integrable representations of general linear groups \cite[Theorem 9.3]{Ze}, for each $\nu \in \Sigma$ there is a {\em segment} $\Delta_{\nu}$, that is a set of the form $\Delta_{\nu}=\{ \rho_{\nu}\lvert \det\rvert_E^{a_{\nu}}, \rho_{\nu}\lvert \det\rvert_E^{a_{\nu}+1},\ldots, \rho_{\nu}\lvert \det\rvert_E^{b_{\nu}}\}$ where $\rho_{\nu}$ is (the isomorphism class of) a supercuspidal representation of some $\GL_{d_{\nu}}(E)$ and $a_{\nu}$, $b_{\nu}$ are real numbers with $b_{\nu}-a_{\nu}\in \mathbb{N}$, such that $\nu$ is isomorphic to the unique irreducible quotient of
$$\displaystyle \rho_{\nu}\lvert \det\rvert_E^{a_{\nu}}\times \rho_{\nu}\lvert \det\rvert_E^{a_{\nu}+1}\times \ldots \times \rho_{\nu}\lvert \det\rvert_E^{b_{\nu}}.$$
Set $T=\bigcup_{\nu\in \Sigma} \Delta_\nu$ and for each $\rho\in T$ let
$$\displaystyle \ell(\rho)=\sum_{\nu\in \Sigma; \rho\in \Delta_\nu} m(\nu).$$
Then, up to the ordering, $\tau$ is isomorphic to $\bigboxtimes_{\rho\in T} \rho^{\boxtimes \ell(\rho)}$. Therefore, there is an isomorphism $X_{\unr}(L)\simeq \prod_{\rho\in T} (\bC^\times)^{\ell(\rho)}$ that descends to an isomorphism
$$\displaystyle X_{\unr}(L)/W^0_\tau\simeq \prod_{\rho\in T} (\bC^\times)^{\ell(\rho)}/\mathfrak{S}_{\ell(\rho)}$$
such that combined with the isomorphism \eqref{isom1}, the map $X_{\unr}(M)/W^0_{\sigma_0}\to X_{\unr}(L)/W^0_\tau$ becomes
\begin{equation}\label{immers1}
\displaystyle \prod_{\nu\in \Sigma} (\bC^\times)^{m(\nu)}/\mathfrak{S}_{m(\nu)}\to \prod_{\rho\in T} (\bC^\times)^{\ell(\rho)}/\mathfrak{S}_{\ell(\rho)},
\end{equation}
$$(z_{\nu})_{\nu\in \Sigma}\mapsto (\bigtimes_{\nu\in \Sigma; \rho\in \Delta_{\nu}} z_\nu)_{\rho\in T}$$
where $\bigtimes_{\nu\in \Sigma; \rho\in \Delta_{\nu}} z_\nu$ denotes the ``concatenation'' of the $z_\nu$ with $\rho\in \Delta_\nu$ (whose image in $(\bC^\times)^{m(\nu)}/\mathfrak{S}_{m(\nu)}$ does not depend on the ordering).


Therefore, it only remains to show that \eqref{immers1} is a closed immersion. By Zelevinsky's classification of generic representations of $\GL_n(E)$ \cite[Theorem 9.7]{Ze}, for every $\nu, \nu'\in \Sigma$, if $\Delta_{\nu}\cup \Delta_{\nu'}$ is again a segment then $\Delta_\nu\subseteq \Delta_{\nu'}$ or $\Delta_{\nu'}\subseteq \Delta_\nu$. In particular, it follows that for $\nu\in \Sigma$ the union
$$\displaystyle \bigcup_{\nu'\in \Sigma; \Delta_{\nu'}\subsetneq \Delta_\nu} \Delta_{\nu'}$$
is strictly smaller than $\Delta_\nu$. Let $\rho_\nu\in \Delta_\nu$ be in the complement of this subset. Then, for every $\nu,\nu'\in \Sigma$, $\rho_\nu\in \Delta_{\nu'}$ implies $\Delta_\nu\subseteq \Delta_{\nu'}$. Moreover, for each $\nu\in \Sigma$ the map
$$\displaystyle \prod_{\nu'\in \Sigma; \Delta_\nu\subseteq \Delta_{\nu'}} (\bC^\times)^{m(\nu')}/\mathfrak{S}_{m(\nu')}\to \prod_{\nu'\in \Sigma; \Delta_\nu\subsetneq \Delta_{\nu'}} (\bC^\times)^{m(\nu')}/\mathfrak{S}_{m(\nu')} \times (\bC^\times)^{\ell(\rho_\nu)}/\mathfrak{S}_{\ell(\rho_\nu)},$$
$$\displaystyle (z_{\nu'})_{\Delta_\nu\subseteq \Delta_{\nu'}}\mapsto \left((z_{\nu'})_{\Delta_\nu\subsetneq \Delta_{\nu'}}, \bigtimes_{\Delta_\nu\subseteq \Delta_{\nu'}} z_{\nu'} \right)$$
is a closed immersion e.g. because it admits a left inverse. Therefore, that the map \eqref{immers1} is a closed immersion follows from the next lemma.

\begin{lem}
Let $I$, $J$ be finite sets and $(X_i)_{i\in I}$, $(Y_j)_{j\in J}$ be families of algebraic varieties over $\bC$. Let $f: \prod_{i\in I} X_i\to \prod_{j\in J} Y_j$ be a regular morphism. Let also $i\mapsto j_i\in J$ be an injective map and $\preceq$ be an order on $I$ such that the following condition is satisfied:
\begin{num}
\item\label{condition} For each $i_0\in I$, the composition of $f$ with the projection $\prod_{j\in J} Y_j\to Y_{j_{i_0}}$ factorizes through the projection $\prod_{i\in I}X_i\to \prod_{i_0\preceq i} X_i$ and the product
$$\displaystyle \prod_{i_0\preceq i} X_i\to \prod_{i_0\prec i} X_i\times Y_{j_{i_0}}$$
of the induced morphism $\prod_{i_0\preceq i} X_i\to Y_{j_{i_0}}$ with the projection $\prod_{i_0\preceq i}X_i\to \prod_{i_0\prec i} X_i$ is a closed immersion.
\end{num}
Then, $f$ is a closed immersion.
\end{lem}

\begin{proof}
It is easy to see that the condition \eqref{condition} is still satisfied for any order finer than $\preceq$. In particular, we may assume that $\preceq$ is a total order. Then, we can write $I=\{i_1,\ldots,i_d \}$ such that $i_k\preceq i_l$ if and only if $k\leqslant l$. By descending induction on $1\leqslant k\leqslant d$, \eqref{condition} implies that the morphism $\prod_{l\geqslant k} X_{i_l}\to \prod_{l\geqslant k} Y_{j_{i_l}}$ is a closed immersion. In particular, for $k=1$ we get that the map $\prod_{i\in I} X_i\to \prod_{i\in I} Y_{j_i}$ is a closed immersion from which it follows that so does $f$.
\end{proof}
\end{proof}



Of course, all the above constructions and results apply similarly to $G'$. Let $\BC:\Irr(G')\to \Irr(G)$ be the quadratic base-change map constructed by Arthur and Clozel \cite{AC}. By \cite[Lemma 6.10]{AC}, $\BC$ restricts to a map $\cB(G')\to \cB(G)$. Moreover, by \cite[Lemma 6.12]{AC}, the following diagram is commutative
\begin{equation}\label{diagram bc}
\displaystyle \xymatrix{\Irr(G')\ar[r]^{\BC} \ar[d]^{\lambda} & \Irr(G) \ar[d]^{\lambda} \\ \cB(G') \ar[r]^{\BC} & \cB(G).}
\end{equation}

\begin{lem}\label{lem BC regular}
\begin{enumerate}[(i)]
\item For each connected component $\Omega\subset \Irr(G)$, there exists $(M,\sigma)\in \Sqr(G')$ such that $\BC^{-1}(\Omega)\subseteq \Irr_{M,\sigma}(G')$. Moreover, for every connected components $\Omega,\Omega'\subset \Irr(G')$ we either have $\BC(\Omega)=\BC(\Omega')$ or that $\BC(\Omega)$ and $\BC(\Omega')$ lie in distinct connected components of $\Irr(G)$.
\item $\BC$ is a finite regular map which is flat over its image.
\end{enumerate}
\end{lem}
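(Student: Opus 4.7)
The plan is to reduce the statement to an explicit combinatorial description of how $\BC$ acts on Langlands data, which in turn rests on two classical inputs from \cite{AC}: base change commutes with parabolic induction, and on essentially square-integrable representations its behavior is well understood. Concretely, for every essentially square-integrable representation $\delta'$ of $\GL_m(F)$, either $\BC(\delta')$ is essentially square-integrable on $\GL_m(E)$ (the \emph{stable} case), or $m$ is even and $\BC(\delta') = \tau\times \tau^c$ for some $\tau\in \Pi_{2,\ess}(\GL_{m/2}(E))$ with $\tau^c\not\simeq \tau$ (the \emph{unstable} case). Crucially, this dichotomy depends only on the component of $\Sqr(\GL_m(F))$ containing $\delta'$, since twisting $\delta'$ by $X_{\unr}(\GL_m(F))$ twists $\BC(\delta')$ by the pull-back of the character, a deformation that cannot change the reducibility pattern. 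I would begin by recording these two facts carefully.

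Once this is in hand, for any $(M',\sigma')\in \Sqr(G')$ with block decomposition $M'=\prod \GL_{n_i}(F)$ and $\sigma'=\boxtimes \delta'_i$, applying Step~1 component by component produces canonical $(M,\sigma)\in \Sqr(G)$ with $M=\prod M_i$ (where $M_i=\GL_{n_i}(E)$ or $\GL_{n_i/2}(E)^2$ depending on whether $\delta'_i$ is stable or not) and $\sigma=\boxtimes \BC(\delta'_i)$. Compatibility of $\BC$ with parabolic induction and the Langlands classification implies that $\BC$ sends $[M',\sigma'\otimes\chi']$ to $[M,\sigma\otimes \BC(\chi')]$ for every $\chi'\in X(M')$, where $\BC:X(M')\to X(M)$ is the natural (regular and finite) morphism induced by $\Norm_{E/F}$ on each factor. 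This explicit formula immediately gives part (i): the image of $\Irr_{M',\sigma'}(G')$ lies in $\Irr_{M,\sigma}(G)$, and conversely if $\BC(\pi')$ is in a fixed component $\Omega\subseteq \Irr_{M,\sigma}(G)$, then inverting the stable/unstable dichotomy reconstructs the data $(M',\sigma')$ up to the $W(G',M')$-action and a twist by an element of $X(M')$ (the only ambiguity being the well-known twist by $\eta\circ\det$ in stable factors, which stays within $\Irr_{M',\sigma'}(G')$). The second assertion of (i) follows from the observation that a connected component $\Omega'\subset \Irr(G')$ corresponds to a fixed coset in $X(M')/(X_{\unr}(M')W'_{\sigma'})$, and that its image under $\BC$ is determined by the image of that coset in $X(M)/W'_\sigma$; if two such images share a single point, the corresponding cosets must coincide.

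For part (ii), the description in the previous paragraph shows that $\BC$ is, on each connected component of $\Irr(G')$, the map on GIT quotients induced by a morphism of (countably many copies of) complex tori $X(M')\to X(M)$. Since this last morphism is regular and finite (it is the composition of the restriction along an inclusion of diagonalizable groups $M'\to M$ at the level of characters, which is finite), so is $\BC:\Irr(G')\to \Irr(G)$. For flatness over the image, I would argue component by component: the image $\BC(\Irr_{M',\sigma'}(G'))$ is cut out inside $\Irr_{M,\sigma}(G)$ by the equations expressing that certain pairs of coordinates in the unstable factors coincide (after Galois twisting), so it is a smooth subvariety isomorphic to $X(M')/W'_{\sigma'}$. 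The restriction of $\BC$ to this image is then a finite surjective map between smooth varieties of equal dimension, and so is flat by the miracle flatness theorem \cite[\href{https://stacks.math.columbia.edu/tag/00R4}{Tag 00R4}]{stacks-project}. The main obstacle I anticipate is keeping the bookkeeping clean in Step~1 when verifying that the splitting/non-splitting dichotomy is locally constant and that the involved ambiguity in the fibers of $\BC$ (the $\eta$-twist) stays within a single $\Irr_{M',\sigma'}$ class; the rest is essentially formal once this is established.
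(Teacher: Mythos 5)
Your part (i) and the finiteness/regularity half of part (ii) are fine and follow essentially the same route as the paper (which is terse on (i), citing only \cite[Proposition 6.7]{AC} and compatibility with parabolic induction, and proves finiteness via the commutative square $X(M')\to X(L)$ over $\Irr_{M',\sigma'}(G')\to\Irr(G)$). The genuine problem is in your flatness argument. You assert that $\BC(\Irr_{M',\sigma'}(G'))$ is ``a smooth subvariety isomorphic to $X(M')/W'_{\sigma'}$.'' This is false whenever $\sigma'$ has at least one \emph{stable} block $\delta'_i$ (i.e.\ $\delta'_i\not\simeq\delta'_i\otimes\eta$): on such a block the twist by $\eta$ does not lie in $W'_{\sigma'}$ (precisely because $\delta'_i\not\simeq\delta'_i\otimes\eta$), yet $\delta'_i\otimes\chi$ and $\delta'_i\otimes\chi\eta$ have the same base change. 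So $\BC$ restricted to the component is generically $2^k$-to-$1$ with $k$ the number of stable blocks — consistent with the degree formula \eqref{eq1 result multiplicities} — and the image is a further finite quotient of $X(M')/W'_{\sigma'}$, not a copy of it. Your description of the defining equations also omits the conditions coming from the stable factors (Galois-invariance of the twisting character of $E^\times$, via local class field theory); only the unstable factors impose ``pairs of coordinates coincide.'' Since smoothness of the image is exactly the hypothesis that makes miracle flatness applicable, this step is a real gap, not a cosmetic one: finite quotients of tori by groups that are not generated by pseudo-reflections can be singular, so smoothness of this extra quotient needs an argument.

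The paper closes this gap differently and more cleanly: by \cite[Theorem 6.2(b)]{AC} the full image of $\BC$ is the fixed-point locus of the algebraic involution $c$ of $\Irr(G)$, and the fixed locus of a finite-order automorphism of a smooth variety in characteristic zero is smooth (\cite[Proposition 1.3]{Iv}); intersecting with a connected component of $\Irr(G)$ (using the second assertion of (i)) gives smoothness of the image of each component, and then miracle flatness applies. If you want to keep your explicit coordinates, you would instead have to identify the image as $X(M')/\Gamma$ for the group $\Gamma$ generated by $W'_{\sigma'}$ together with the $\eta$-twists on the stable blocks, and verify directly that this quotient is smooth (e.g.\ by absorbing the $\eta$-twists, which act by translation by $2$-torsion or by permuting components of $X(M')$, into a reparametrization before taking the symmetric-group quotients). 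Either way, some additional argument beyond what you wrote is required.
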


\begin{proof}
\begin{enumerate}[(i)]
\item This follows rather easily from the description of the fibers of the base-change map \cite[Proposition 6.7]{AC} and its compatibility with parabolic induction.
\item Let $(M,\sigma)\in \Sqr(G')$. By the compatibility between base-change and parabolic induction, there exist $(L,\tau)\in \Sqr(G)$ and a closed embedding $X(M)\to X(L)$ such that the following diagram commutes
$$\displaystyle \xymatrix{ X(M) \ar[r] \ar[d] & X(L) \ar[d] \\ \Irr_{M,\sigma}(G') \ar[r]^{\BC} & \Irr(G)}$$
where the two vertical maps are given by $\chi\mapsto [M,\sigma\otimes \chi]$ and $\chi\mapsto [L,\tau\otimes \chi]$ respectively. Since these two arrows are finite morphisms and the first one is a quotient map by a finite group of automorphisms, it follows that $\BC$ is both regular and finite. To show the flatness of $\BC$ over its image, we will use the ``miracle flatness theorem'' \cite[Exercise III.10.9]{Hart} which implies that a finite surjective morphism between smooth connected varieties is automatically flat. Indeed, by \cite[Theorem 6.2(b)]{AC} the image of $\BC$ is the set of fixed points of the automorphism $c$ of $\Irr(G)$ induced from the non-trivial Galois automorphism of $E/F$. This automorphism is easily seen to be algebraic, hence by \cite[Proposition 1.3]{Iv} the image of $\BC$ is smooth. Thus, by the second part of (i) the image by $\BC$ of a connected component of $\Irr(G')$ is also smooth (being the intersection of the full image with a component of $\Irr(G)$). Since the source is also smooth we can conclude by \cite[Exercise III.10.9]{Hart}.
\end{enumerate}
\end{proof}

\subsection{The result}\label{section result multiplicities}

For $V\in \cV$ and $\pi\in \Irr(G)$ we set
$$\displaystyle m_V(\pi)=\dim \Hom_{U(V)}(\pi,\bC)$$
where $\Hom_{U(V)}(\pi,\bC)$ denotes the space of $U(V)$-invariant functionals on (the space of) $\pi$. We define the {\em degree} of the base-change map to be the function
$$\deg \BC: \Irr(G)\to \mathbb{N}$$
$$\displaystyle \deg \BC(\pi)=\dim \bC[\Irr(G')]/\mathfrak{m}_\pi\bC[\Irr(G')]$$
where $\mathfrak{m}_\pi\subset \mathbb{C}[\Irr(G)]$ denotes the maximal ideal corresponding to $\pi$. By Lemma \ref{lem BC regular}(ii), $\deg \BC$ is locally constant on the image of the base-change map. Thus, to compute it we just need to consider the case where $\pi$ is in general position in the image in which case we simply have $\deg \BC(\pi)=\lvert \BC^{-1}(\pi)\rvert$. By the description of the image and fibers of $\BC$ and its compatibility with parabolic induction (see \cite[Theorem 6.2, Proposition 6.7]{AC}), we obtain the following explicit description: if $\pi\in \Irr(G)$ is the Langlands quotient of an induced representation of the form
$$\displaystyle \sigma_1\times\ldots\times \sigma_k$$
where for each $1\leqslant i\leqslant k$, $\sigma_i\in \Pi_{\ess,2}(\GL_{n_i}(E))$ for some positive integer $n_i$, then we have
\begin{align}\label{eq1 result multiplicities}
\displaystyle \deg \BC(\pi)=\left\{
    \begin{array}{ll}
        2^{\lvert \{1\leqslant i\leqslant k \mid \sigma_i^c\simeq \sigma_i \}\rvert} & \mbox{ if } \pi\simeq \pi^c, \\
        0 & \mbox{ otherwise.}
    \end{array}
\right.
\end{align}

The following result is proved by Feigon-Lapid-Offen in \cite[Theorem 0.2]{FLO}.

\begin{theo}[Feigon-Lapid-Offen]\label{theo mult FLO}
For every $\pi\in \Irr^{\gen}(G)$ and $V\in \cV$ we have
$$\displaystyle m_V(\pi)\geqslant \left\{
    \begin{array}{ll}
        \ceil{\frac{\deg \BC(\pi)}{2}} & \mbox{ if } U(V) \mbox{ is quasi-split}, \\
        \\
        \floor{\frac{\deg \BC(\pi)}{2}} & \mbox{ otherwise.}
    \end{array}
\right.$$
Moreover, equality holds whenever $\BC$ is unramified on the fiber of $\pi$.
\end{theo}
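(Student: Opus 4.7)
The plan is to exploit the FLO functionals from Theorem \ref{theo FLO funct}: for each $\sigma \in \Temp(G')$ with $\BC(\sigma)=\pi$, we already have a family $\alpha^\sigma_x \in \Hom_{G_x}(\pi,\bC)$ indexed by $x \in X$. Specializing to base points of the two $G$-orbits in $X$ (labelled by $V \in \cV$, with $|\cV|=2$ in the $p$-adic case), this produces a candidate functional $\alpha^\sigma_V \in \Hom_{U(V)}(\pi,\bC)$ for each $\sigma \in \BC^{-1}(\pi)$. Since the cardinality of the fiber is controlled by $\deg \BC(\pi)$ (with equality in general position, extended via the flat family statement of Lemma \ref{lem BC regular}(ii)), the desired lower bound will reduce to counting how many $\alpha^\sigma_V$ are nonzero on each $V$ and establishing linear independence of the nonzero ones.

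First I would reduce to the case where $\pi$ is tempered. Since $\pi$ is generic, it is a full (irreducible) parabolic induction $I_P^G(\tau)$ from a generic tempered $\tau$ on a standard Levi $M \subset G$; moreover $\BC$ is compatible with parabolic induction, and $\deg \BC$ behaves multiplicatively in the precise sense recorded in \eqref{eq1 result multiplicities}. Combining a Mackey-style analysis for $\Hom_{U(V)}(I_P^G(\tau),\bC)$ -- which relates it to hom spaces of $\tau$ on symmetric subgroups of $M$ -- with the fact that the standard module and its Langlands quotient share the same generic character, one reduces the bound to the tempered case on Levi subgroups.

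For $\pi$ generic tempered with $\pi \simeq \pi^c$ (otherwise $\deg \BC(\pi)=0$ and there is nothing to prove), the heart of the proof is a nonvanishing dichotomy for the FLO functionals: using the equivariance \eqref{equivariance FLO functionals center} under the $F^\times$-action on $X$ -- which either swaps or stabilizes the two $G$-orbits according to the parity of $n$ -- together with the central character $\omega_\sigma$, one shows that each $\alpha^\sigma$ is either nonzero on both orbits or supported on a single specific one. A combinatorial count over the fiber $\BC^{-1}(\pi)$, unfolding \eqref{eq1 result multiplicities}, then produces exactly $\lceil \deg\BC(\pi)/2 \rceil$ surviving functionals on the quasi-split $U(V)$ and $\lfloor \deg \BC(\pi)/2 \rfloor$ on the other. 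Linear independence of the nonzero $\alpha^\sigma_V$ across varying $\sigma$ can then be extracted from the relative trace formula (Theorem \ref{theo local TF for X}): a linear relation would, upon pairing against a carefully chosen test function in $C_c^\infty(X)$, force a relation among the relative Bessel distributions $J_\sigma$, which is impossible since the $I_\sigma$ on $G'$ are separated by the scalar Paley-Wiener theorem (Theorem \ref{theo Whittaker-Paley-Wiener theorem}).

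For the final ``equality when $\BC$ is unramified on the fiber'' assertion, \'etaleness of $\BC$ at every $\sigma \in \BC^{-1}(\pi)$ combined with Proposition \ref{prop local isom lambda} applied to both $G$ and $G'$ identifies the Bernstein component through $\pi$ transparently with the one on the $G'$-side; one can then combine Theorem \ref{theo JY TF spec2} (extending the spectral identity to $\cC(X)$) with the density of $C_c^\infty(X)$ in $\cC(X)$ to match the total multiplicity with the Plancherel contribution of a small neighborhood in $\Temp(G')$, forcing every invariant functional to arise from an FLO functional. The main obstacle throughout is undoubtedly the nonvanishing dichotomy in the tempered case: pinpointing which $\alpha^\sigma$ is supported on which orbit is a subtle local computation, essentially of epsilon-factor type, and it is precisely this step which in \cite{FLO} required global input through the converse theorem for $\GL_n$ -- no purely local proof of the dichotomy seems available at present.
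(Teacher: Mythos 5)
This statement is not proved in the paper at all: it is quoted verbatim as \cite[Theorem 0.2]{FLO} (the sentence immediately preceding it reads ``The following result is proved by Feigon-Lapid-Offen in \cite[Theorem 0.2]{FLO}''), and the paper's own contribution is the refinement to an equality in Theorem \ref{theo mult}. So there is no in-paper argument to compare yours against; what you have written is an attempted reconstruction of the proof in \cite{FLO}, which the present paper treats as a black box.

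As a reconstruction, your outline has the right general shape (FLO functionals as the source of invariant linear forms, reduction to the tempered case via the standard module, a nonvanishing/support analysis over the fiber of $\BC$), and you are right to flag that the support dichotomy is the genuinely hard point requiring global input. But one step as written would fail: the linear independence of the nonzero $\alpha^\sigma_V$ ``across varying $\sigma$'' in the fiber. By \cite[Lemma 3.5]{FLO} (invoked in Section \ref{sect global periods} of the paper) one has $\alpha^{\sigma\otimes\eta}_x=\eta(\disc(x))\,\alpha^\sigma_x$, and $\eta(\disc(x))$ is constant on each $G$-orbit; hence $\alpha^\sigma_V$ and $\alpha^{\sigma\otimes\eta}_V$ are \emph{proportional} on any fixed $V$, even though $\sigma$ and $\sigma\otimes\eta$ are distinct points of $\BC^{-1}(\pi)$ in general. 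You therefore cannot extract $\deg\BC(\pi)$ independent functionals by varying $\sigma$ naively; the count must be run over the fiber modulo $\eta$-twist, with each twist-orbit $\{\sigma,\sigma\otimes\eta\}$ contributing at most one dimension to each of the two Hermitian forms, and this is exactly where the $\lceil\cdot\rceil/\lfloor\cdot\rfloor$ asymmetry comes from. Relatedly, your appeal to Theorem \ref{theo local TF for X} plus Theorem \ref{theo Whittaker-Paley-Wiener theorem} to separate the $J_\sigma$ cannot distinguish $J_\sigma$ from $J_{\sigma\otimes\eta}$ in the way needed here, for the same reason. Finally, note that your mechanism for the ``equality when unramified'' part (spectral trace formula plus the Paley--Wiener theorem) is essentially the paper's proof of the stronger Theorem \ref{theo mult}, not the route taken in \cite{FLO}; that is legitimate but anachronistic relative to the result being attributed.
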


The goal of this chapter is to refine this result and prove the following.

\begin{theo}\label{theo mult}
For every $\pi\in \Irr^{\gen}(G)$ and $V\in \cV$ we have
$$\displaystyle m_V(\pi)= \left\{
    \begin{array}{ll}
        \ceil{\frac{\deg \BC(\pi)}{2}} & \mbox{ if } V \mbox{ is quasi-split}, \\
        \\
        \floor{\frac{\deg \BC(\pi)}{2}} & \mbox{ otherwise.}
    \end{array}
\right.$$
\end{theo}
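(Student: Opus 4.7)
The plan is to prove the matching upper bound $m(\pi) := \sum_{V\in\cV} m_V(\pi) \leq \deg \BC(\pi)$, since together with the Feigon-Lapid-Offen lower bound (Theorem \ref{theo mult FLO}) and the identity $\lceil d/2\rceil+\lfloor d/2\rfloor = d$, this forces equality in each individual $m_V(\pi)$ in the $p$-adic case where $\lvert \cV\rvert = 2$. I would organize the proof in three steps matching the announced sectioning.

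\textbf{Step 1 (reduction to the tempered case).} Any $\pi \in \Irr^{\gen}(G)$ is a full parabolic induction from an essentially tempered generic representation on a proper Levi. Following the parabolic descent argument of \cite[\S 6]{FLO}, each $m_V(\pi)$ then reduces to a multiplicity for the inducing datum on a smaller Hermitian symmetric space; the explicit formula \eqref{eq1 result multiplicities} shows that $\deg \BC$ transforms compatibly under this operation. This reduces the proof to the case $\pi = \BC(\sigma)$ for some $\sigma \in \Temp(G') \cap \Irr^{\gen}(G')$.

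\textbf{Step 2 (relating the multiplicity to the trace formula).} For tempered generic $\pi$, the isomorphism $C_c^\infty(X)_\pi \simeq (\pi^\vee)^{m(\pi)}$ together with uniqueness of the Whittaker model for the generic $\pi^\vee$ gives
$$m(\pi) = \dim \Hom_N(C_c^\infty(X)_\pi, \psi_n).$$
Each $\tau$ in the fiber $\BC^{-1}(\pi)$ (which is entirely tempered since $\pi$ is tempered) provides an element $J_\tau$ of this abstract relative character space, extended continuously to $\cC(X)$ by Lemma \ref{lem1 abstract rc}. The local trace formula for $X$ (Theorem \ref{theo JY TF spec2})
$$J(\varphi,\varphi) = \int_{\Temp(G')} \lvert J_\tau(\varphi)\rvert^2\, d\mu_{G'}(\tau)$$
identifies $m(\pi)$, up to suitable localization in the $\pi$-component of the spectrum, with the rank of the family $\{J_\tau\}_{\tau\in\BC^{-1}(\pi)}$ of pairings.

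\textbf{Step 3 (upper bound via Whittaker Paley-Wiener and deformation).} When $\BC$ is unramified at $\pi$, FLO already prove $m(\pi) = \deg \BC(\pi)$; the matching linear independence of the FLO characters $\{J_\tau\}_{\tau\in\BC^{-1}(\pi)}$ can be reproved in our framework by combining the scalar Whittaker Paley-Wiener theorem (Theorem \ref{theo Whittaker-Paley-Wiener theorem}), which identifies $\cI_{\gen}(G') = \cZ_{\gen}(G')$, with the Jacquet-Ye transfer (Theorem \ref{theo JY transfer}) to produce $\varphi \in C_c^\infty(X)$ realizing arbitrary prescribed values $(J_\tau(\varphi))_\tau$ at the fiber. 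For the general, possibly ramified case, I would deform $\pi$ along a suitable algebraic curve in $\Irr(G)$ to a nearby generic tempered $\pi'$ at which $\BC$ is unramified: by Lemma \ref{lem BC regular}, $\deg \BC(\pi') = \deg \BC(\pi)$, so $m(\pi') = \deg \BC(\pi)$. An upper semicontinuity argument for $m(\cdot)$ on the tempered generic locus, proved using the trace formula (which expresses $m(\pi)$ as the rank of a Gram matrix whose entries vary continuously in the tempered parameter) and the Whittaker Paley-Wiener theorem (to guarantee that enough test functions are available uniformly), then yields $m(\pi) \leq m(\pi') = \deg \BC(\pi)$.

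\textbf{Expected main obstacle.} The hardest step will be the upper semicontinuity at ramification points of $\BC$: at such a $\pi$ the tempered fiber $\BC^{-1}(\pi)$ has strictly fewer than $\deg \BC(\pi)$ points, so the FLO characters $J_\tau$ at $\pi$ alone cannot span a $\deg \BC(\pi)$-dimensional space. Controlling the jump in $m$ across the ramification locus will require running the deformation through the correct algebraic curves in $\Irr(G')$, using the algebraic-variety structure developed in Section \ref{section structure of alg var}, and keeping careful track of how the FLO characters degenerate as parameters collide, via the structure of $\cZ_{\gen}(G')$ supplied by the Whittaker Paley-Wiener theorem.
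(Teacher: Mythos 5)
Your Steps 1 and 2 track the paper's argument closely (the reduction to tempered $\pi$ via the analysis of \cite[\S 6]{FLO}, and the identification $m(\pi)=\dim\Hom_N(C_c^\infty(X)_\pi,\psi_n)$ combined with the trace formula extended to $\cC(X)$). The divergence, and the gap, is in Step 3. You correctly identify the obstacle — at a ramification point the tempered fiber $\BC^{-1}(\pi)$ has fewer than $\deg\BC(\pi)$ elements, so the relative characters $J_\tau$ attached to that fiber alone cannot account for the multiplicity — but your proposed resolution by deformation and ``upper semicontinuity of $m$'' is not an argument. The mechanism you invoke (rank of a Gram matrix with continuously varying entries) presupposes that $m(\pi)$ is computed by a finite matrix built from the $J_\tau$ at the fiber over $\pi$, which is exactly what fails at ramification points; and no independent proof of semicontinuity of $\pi\mapsto m(\pi)$ across the ramification locus is supplied. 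As written, Step 3 restates the difficulty rather than overcoming it.

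The paper's route avoids deformation entirely and is purely algebraic. What Proposition \ref{prop1 1st step mult} actually shows is that every element of $\Hom_N(C_c^\infty(X)_\pi,\psi_n)$ lies in the weak closure of the span of \emph{all} $J_\sigma$ with $\sigma\in\BC^{-1}(\Omega_\pi^t)$, $\Omega_\pi^t$ the tempered part of the whole connected component of $\pi$ — not just the fiber. One then considers the space $V$ of functions $\sigma\mapsto J_\sigma(\varphi)$ on $\BC^{-1}(\Omega_\pi^t)$; by the Jacquet--Ye transfer this equals the space of $\sigma\mapsto I_\sigma(f')$, and the scalar Whittaker Paley--Wiener theorem identifies it with the full ring of regular functions $\bC[(\Omega_\pi')^\lambda]$. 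Every abstract relative character supported on $\pi$ factors through $V/\mathfrak{m}_{\lambda(\pi)}V$, and the dimension of this quotient is exactly $\deg\BC(\pi)$ — by definition of the degree, using Proposition \ref{prop local isom lambda} (so that one can pass between $\Omega_\pi'$ and $(\Omega_\pi')^\lambda$ near $\pi$) and the flatness of $\BC$ from Lemma \ref{lem BC regular}(ii), which makes $\deg\BC$ locally constant. In effect the scheme-theoretic fiber, with its nilpotents, supplies the ``missing'' functionals at ramification points (derivatives of $J_\sigma$ in the parameter), and no limiting process over nearby unramified $\pi'$ is needed. To repair your proposal you would either have to adopt this algebraic argument or genuinely prove the semicontinuity statement, which is not easier.
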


\subsection{First step: Reduction to the tempered case}\label{section first step multiplicities}

For $\pi \in \Irr(G)$ we set
$$\displaystyle m(\pi):=\sum_{V\in \cV} m_V(\pi).$$
Note that, since we are in the $p$-adic case, the above sum contains only two terms. Moreover, if $n$ is odd every $V\in \cV$ is quasi-split whereas, if $n$ is even one of the Hermitian spaces in $\cV$ is quasi-split and the other is not. Using \eqref{eq1 result multiplicities}, we readily check that if $n$ is odd then $\deg \BC(\pi)$ is always even. Therefore, by Theorem \ref{theo mult FLO}, Theorem \ref{theo mult} is equivalent to
\begin{align}\label{eq1 first step}
\displaystyle m(\pi)\leqslant \deg \BC(\pi)
\end{align}
for every $\pi\in \Irr^{\gen}(G)$.

Let $\pi\in \Irr^{\gen}(G)$. It can be written as
$$\displaystyle \pi=\tau_1\lvert \det\rvert^{\lambda_1}\times\ldots\times \tau_t \lvert \det \rvert^{\lambda_t}$$
where, for each $1\leqslant i\leqslant t$, $\tau_i\in \Temp(\GL_{n_i}(E))$ for some positive integer $n_i$ and $\lambda_1,\ldots,\lambda_t$ are real numbers satisfying $\lambda_1>\lambda_2>\ldots>\lambda_t$. For every $1\leqslant i\leqslant t$, we define $m(\tau_i)$ and $\deg \BC(\tau_i)$ similarly to $m(\pi)$ and $\deg \BC(\pi)$ (just replacing $n$ by $n_i$). The proposition below will allow to reduce the proof of Theorem \ref{theo mult} to the case where $\pi$ is tempered.

\begin{prop}\label{prop first reduction}
We have
\begin{enumerate}[(i)]
\item $\displaystyle \deg \BC(\pi)=\deg \BC(\tau_1)\ldots \deg \BC(\tau_t);$
\item $\displaystyle m(\pi)=m(\tau_1)\ldots m(\tau_t)$.
\end{enumerate}
\end{prop}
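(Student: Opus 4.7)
The plan is to dispatch the two statements by different techniques: (i) is an elementary consequence of the explicit formula for $\deg\BC$, whereas (ii) rests on a Mackey-type decomposition of $U(V)$-invariant functionals combined with the vanishing of closed-orbit contributions.

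For (i), further decompose each tempered constituent $\tau_i$ as a unitary induction $\tau_i\simeq\delta_{i,1}\times\cdots\times\delta_{i,t_i}$ from square-integrable representations $\delta_{i,j}$. Since $\pi$ is generic, it equals the full induction, and its Langlands data consist precisely of the essentially square-integrable representations $\delta_{i,j}\lvert\det\rvert^{\lambda_i}$. As $\lambda_i\in\bR$, Galois-invariance of $\delta_{i,j}\lvert\det\rvert^{\lambda_i}$ is equivalent to that of $\delta_{i,j}$. Applying \eqref{eq1 result multiplicities} separately to $\pi$ and to each $\tau_i$, the identity reduces to the equivalence $\pi\simeq\pi^c \Leftrightarrow \tau_i\simeq\tau_i^c$ for every $i$, which follows in turn from the unicity of Langlands data: since $c$ fixes the real exponents $\lambda_1>\cdots>\lambda_t$, the Langlands parameter of $\pi^c$ is $(\tau_1^c\lvert\det\rvert^{\lambda_1},\ldots,\tau_t^c\lvert\det\rvert^{\lambda_t})$ and must match that of $\pi$ term by term.

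For (ii), let $P=MN$ be the standard parabolic with Levi $M=\GL_{n_1}(E)\times\cdots\times\GL_{n_t}(E)$ and set $\sigma_i=\tau_i\lvert\det\rvert^{\lambda_i}$, so that $\pi=I_P^G(\sigma_1\boxtimes\cdots\boxtimes\sigma_t)$. By Frobenius reciprocity and a Bernstein--Zelevinsky-type filtration of $C^\infty(U(V)\backslash G)$ indexed by the $U(V)$-orbits on $P\backslash G$, the space $\Hom_{U(V)}(\pi,\bC)$ breaks (at the level of graded pieces) into contributions attached to these orbits. A Witt-theorem analysis identifies the open orbits with orthogonal Hermitian decompositions $V=V_1\oplus\cdots\oplus V_t$ with $\dim V_i=n_i$, each having stabilizer $U(V_1)\times\cdots\times U(V_t)\subset M$; the remaining orbits contribute through Jacquet modules of the $\sigma_i$ along proper parabolic subgroups of $M$.

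The crucial step is to show that under the strict dominance $\lambda_1>\cdots>\lambda_t$ the non-open orbits contribute trivially. This is an exponent argument in the spirit of Casselman's criterion adapted to the symmetric-space setting à la Kato--Takano and Lagier: the $M$-exponents that could arise from closed orbits lie outside the cone prescribed by the essentially tempered nature of the $\sigma_i$ combined with strict dominance of their central exponents. Granting this vanishing and noting that $\lvert\det g\rvert_E=1$ for every $g\in U(V_i)$ (since $\det g\cdot\overline{\det g}=1$), so that $\sigma_i$ and $\tau_i$ carry identical $U(V_i)$-invariants, we obtain
$$\Hom_{U(V)}(\pi,\bC)\simeq\bigoplus_{\substack{V=V_1\oplus\cdots\oplus V_t \\ \dim V_i=n_i}}\bigotimes_{i=1}^t\Hom_{U(V_i)}(\tau_i,\bC).$$
Summing over $V\in\cV$ and interchanging the two summations yields
$$m(\pi)=\sum_{V\in\cV}\sum_{V=V_1\oplus\cdots\oplus V_t}\prod_i m_{V_i}(\tau_i)=\prod_{i=1}^t\Bigl(\sum_{V_i\in\cV_{n_i}}m_{V_i}(\tau_i)\Bigr)=\prod_{i=1}^t m(\tau_i).$$

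The principal obstacle is the vanishing of the closed-orbit contributions, which sits at the geometric heart of the reduction to the tempered case and closely parallels the analysis of Feigon--Lapid--Offen in \cite[\S 6]{FLO}; the asymptotic-functional technology developed there (and in the related works of Kato--Takano, Lagier and Sakellaridis--Venkatesh) should supply exactly the input required, so no new analytic ingredient beyond their framework is needed.
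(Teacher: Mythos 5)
Part (i) of your argument is correct and is exactly the paper's (essentially one-line) justification via \eqref{eq1 result multiplicities} and uniqueness of Langlands data. For part (ii), your overall route is also the paper's: reduce to the statement that the unitary periods of $\pi$ are supported on the open $P$-orbits (this is the criterion of \cite[Lemma 6.7]{FLO}), identify the open orbits with orthogonal decompositions $V=V_1\oplus\cdots\oplus V_t$ via Witt's theorem, and conclude by the combinatorial count $\sum_{V\in\cV}\sum_{\oplus V_i\simeq V}\prod_i m_{V_i}(\tau_i)=\prod_i m(\tau_i)$, which you state correctly. The problem is that you never prove the one statement carrying the mathematical content of (ii): the vanishing of the non-open-orbit contributions. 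You write ``Granting this vanishing'' and then assert that the asymptotic-functional technology ``should supply exactly the input required.'' That step is not automatic from temperedness of the $\sigma_i$; it genuinely uses the strict inequalities $\lambda_1>\cdots>\lambda_t$ interacting with the symmetric shape of the stabilizers on the intermediate orbits, and it is precisely what must be argued.

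Concretely, by \cite[\S 6.1, Lemma 6.4]{FLO} a non-open orbit is indexed by partitions $n_i=n_{i,t}+\cdots+n_{i,1}$ with $n_{i,j}=n_{j,i}$, not all trivial, and it supports a nonzero functional on $\pi$ only if the Jacquet modules $J_{P_i}(\tau_i\lvert\det\rvert^{\lambda_i})$ admit irreducible subquotients $\rho_i=\rho_{i,t}\boxtimes\cdots\boxtimes\rho_{i,1}$ with $\rho_{ij}\simeq\rho_{ji}^c$ for $i\neq j$ (the off-diagonal blocks pair up through twisted-diagonally embedded general linear groups). The paper rules this out by an explicit exponent computation: choose $i$ minimal with a nontrivial partition and $j$ maximal with $n_{ij}\neq 0$, so that $j>i$; letting $\mu$ be the real exponent of $\omega_{\rho_{ij}}=\omega_{\rho_{ji}}^c$, Casselman's temperedness criterion \cite[Proposition III.2.2]{Wald1} applied to $\tau_i$ (where $\GL_{n_{ij}}$ is the first nontrivial block of $M_i$) and to $\tau_j$ (where, by minimality of $i$, $\GL_{n_{ji}}$ is again effectively first) forces $\lambda_i\leqslant\mu/n_{ij}$ and $\lambda_j\geqslant\mu/n_{ij}$, contradicting $\lambda_i>\lambda_j$. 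Your ``exponent cone'' heuristic points in the right direction, but without this (or an equivalent) argument spelled out, part (ii) is not proved.
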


\begin{proof}
(i) can be inferred directly from the description \eqref{eq1 result multiplicities} of $\deg \BC(\pi)$. The proof of (ii) essentially follows from the analysis performed in \cite[\S 6]{FLO} but is not explicitely stated there. Therefore, we shall now explain carefully this deduction. Let
$$\displaystyle M=\GL_{n_1}(E)\times\ldots \times \GL_{n_t}(E)$$
be the standard Levi subgroup of $G$ from which $\pi$ is induced as a standard module and
$$\displaystyle \tau=\tau_1\lvert \det\rvert^{\lambda_1}\boxtimes\ldots\boxtimes \tau_t\lvert \det \rvert^{\lambda_t}\in \Irr(M)$$
so that $\pi\simeq I_P^G(\tau)$ where $P$ is the standard parabolic subgroup with Levi $M$. By \cite[Lemma 6.7]{FLO}, we just need to check that the ``unitary periods of $\pi$ are supported on open $P$-orbits" with the terminology of {\em loc. cit.} (see \cite[Definition 6.6]{FLO}). Here, the $P$-orbits refer to the action of $P$ on $X$. Given the explicit description of $P$-orbits from \cite[\S 6.1]{FLO} and of the ``unitary periods" supported on each of these $P$-orbit from \cite[Lemma 6.4]{FLO}, we just need to show the following: if $n_i=n_{i,t}+\ldots+n_{i,1}$ are partitions of the $n_i$'s satisfying $n_{i,j}=n_{j,i}$ for every $1\leqslant i,j\leqslant t$ which are not all trivial (i.e. there exist $1\leqslant i\neq j\leqslant t$ with $n_{i,j}\neq 0$), $P_i$ stands for the standard parabolic subgroup of $\GL_{n_i}(E)$ associated to this partition of $n_i$ with standard Levi
\begin{align}\label{eq2 first step}
\displaystyle M_i=\GL_{n_{i,t}}(E)\times\ldots\times \GL_{n_{i,1}}(E)
\end{align}
and $J_{P_i}(\tau_i\lvert \det\rvert^{\lambda_i})$ denotes the normalized Jacquet module with respect to this parabolic, there is no irreducible subquotients
$$\displaystyle \rho_i=\rho_{i,t}\boxtimes\ldots\boxtimes \rho_{i,1}\in \Irr(M_i)$$
of the $J_{P_i}(\tau_i)$, $1\leqslant i\leqslant t$, such that $\rho_{ij}\simeq \rho_{ji}^c$ for every $1\leqslant i\neq j\leqslant t$. Assume, by way of contradiction, that there exist such partitions and irreducible subquotients of the Jacquet modules. Let $1\leqslant i\leqslant t$ be the smallest index such that the partition of $n_i$ is non-trivial and $1\leqslant j\leqslant t$ be the largest index such that $n_{ij}\neq 0$. Note that $j>i$ as the partition of $n_i$ is non-trivial and $n_{ik}=n_{ki}=0$ for every $k<i$ by minimality of $i$. Let $\mu$ be the real exponent of the central character $\omega_{\rho_{ij}}=\omega_{\rho_{ji}}^c$. As $\GL_{n_{ij}}(E)$ is the first non-trivial group in the product decomposition \eqref{eq2 first step} of $M_i$, by Casselman's criterion of temperedness \cite[Proposition III.2.2]{Wald1} we have $\lambda_i\leqslant \frac{\mu}{n_{ij}}$. Similarly, since $n_{jk}=n_{kj}=0$ for $k<i$ (again by minimality of $i$), by Casselman's criterion of temperedness we have $\lambda_j\geqslant \frac{\mu}{n_{ji}}=\frac{\mu}{n_{ij}}$. But $j>i$ implies that $\lambda_i>\lambda_j$ and therefore we have a contradiction.
\end{proof}

\subsection{Second step: relation between multiplicities and FLO functionals}\label{section second step multiplicities}

For $\pi\in \Irr(G)$, we let $C_c^\infty(X)_\pi$ be the $\pi^\vee$-isotypic quotient of $C_c^\infty(X)$ i.e. the maximal quotient which is $G$-isomorphic to a direct sum of copies of $\pi^\vee$. Note that by Frobenius reciprocity, since $X=\bigsqcup_{V\in \mathcal{V}} U(V)\backslash G$ (see Section \ref{S:symm X}), we have
$$\displaystyle C_c^\infty(X)_\pi\simeq (\pi^\vee)^{\oplus m(\pi)},\;\;\; \pi\in \Irr(G).$$
Therefore, the following lemma is just a consequence of the unicity of Whittaker models.

\begin{lem}\label{lem1 1st step mult}
For $\pi\in \Irr^{\gen}(G)$, we have
$$\displaystyle m(\pi)=\dim \Hom_N(C_c^\infty(X)_\pi, \psi_n).$$
\end{lem}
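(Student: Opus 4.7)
The plan is to use uniqueness of Whittaker models after unwinding the definition of $C_c^\infty(X)_\pi$.

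First, take the isomorphism $C_c^\infty(X)_\pi \simeq (\pi^\vee)^{\oplus m(\pi)}$ asserted in the paragraph preceding the lemma; it follows from the decomposition $X = \bigsqcup_V U(V)\backslash G$, the identification $C_c^\infty(U(V)\backslash G) = \ind_{U(V)}^G \mathbb{C}$, and standard Frobenius reciprocity, matching
$$\dim \Hom_G(C_c^\infty(X), \pi^\vee) = \sum_V \dim (\pi^\vee)^{U(V)}$$
with $m(\pi) = \sum_V \dim \Hom_{U(V)}(\pi,\mathbb{C})$ via the natural duality pairing $C_c^\infty(X)\times C^\infty(X) \to \mathbb{C}$, $(f,\phi) \mapsto \int_X f\phi\, dx$, which identifies $C^\infty(X)$ with the smooth dual of $C_c^\infty(X)$ (and hence, combined with $\pi^{\vee\vee}=\pi$, gives $\Hom_G(C_c^\infty(X),\pi^\vee)\simeq \Hom_G(\pi,C^\infty(X))$, the convergence of the pairing being ensured by the compactness of $\Supp f$). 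Applying $\Hom_N(-, \psi_n)$ to the displayed isomorphism then yields
$$\dim\Hom_N(C_c^\infty(X)_\pi, \psi_n) = m(\pi) \cdot \dim \Hom_N(\pi^\vee, \psi_n).$$

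It therefore remains to check that $\dim \Hom_N(\pi^\vee, \psi_n) = 1$. Since $\pi \in \Irr^{\gen}(G)$ and the contragredient of a generic irreducible representation of $\GL_n(E)$ is again generic (via the involution $g \mapsto {}^t g^{-1}$, which interchanges $\pi$ and $\pi^\vee$ while preserving the nondegeneracy of the Whittaker datum), the uniqueness of Whittaker models for $\GL_n(E)$ gives this equality. This last input is the only substantive ingredient; everything else is formal bookkeeping involving Frobenius reciprocity and the double-dual identification.
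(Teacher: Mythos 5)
Your proof is correct and follows the same route as the paper: the paper likewise records the isomorphism $C_c^\infty(X)_\pi\simeq (\pi^\vee)^{\oplus m(\pi)}$ via Frobenius reciprocity and the orbit decomposition of $X$, and then invokes uniqueness of Whittaker models. You have merely supplied the details (the duality identifying $\Hom_G(C_c^\infty(X),\pi^\vee)$ with $\Hom_G(\pi,C^\infty(X))$, and the genericity of $\pi^\vee$) that the paper leaves implicit.
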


Recall the FLO relative character $J_\sigma$ associated to each $\sigma\in \Temp(G')$ introduced in Section \ref{section FLO funct}. Note that $J_\sigma\in \Hom_N(C_c^\infty(X), \psi_n)$ for every $\sigma\in \Temp(G')$. Let $\pi\in \Temp(G)$, $\Omega_\pi\subseteq \Irr(G)$ be the connected component of $\pi$ and $\Omega_\pi^t=\Omega_\pi\cap \Temp(G)$. We equip $\Hom_N(C_c^\infty(X), \psi_n)$ with the weak topology (that is the topology of pointwise convergence). Set
$$\displaystyle \mathcal{J}(\pi):=\overline{\langle J_\sigma\mid \sigma\in \BC^{-1}(\Omega_\pi^t)\rangle}$$
for the closure of the subspace of $\Hom_N(C_c^\infty(X), \psi_n)$ generated by the FLO relative characters $J_\sigma$ with $\sigma\in \BC^{-1}(\Omega_\pi^t)$. The main result of this section is the following proposition.

\begin{prop}\label{prop1 1st step mult}
We have
$$\displaystyle \Hom_N(C_c^\infty(X)_\pi, \psi_n)\subseteq \mathcal{J}(\pi).$$
\end{prop}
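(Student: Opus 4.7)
The plan is to apply Hahn-Banach together with the local trace formula and Bernstein center localization. By Hahn-Banach in the weak topology of $\Hom_N(C_c^\infty(X),\psi_n)$, the inclusion reduces to showing: for every $\varphi\in C_c^\infty(X)$ with $J_\sigma(\varphi)=0$ for all $\sigma\in\BC^{-1}(\Omega_\pi^t)$, and every $\mathcal{L}\in\Hom_N(C_c^\infty(X)_\pi,\psi_n)$, one has $\mathcal{L}(\varphi)=0$. I fix such $\varphi$ and $\mathcal{L}$, and first invoke Lemma \ref{lem1 abstract rc} to extend $\mathcal{L}$ continuously to $\cC(X)$ and represent it as $\mathcal{L}(\varphi')=\int_N^*\langle R(u)\varphi',F\rangle_X\psi_n(u)^{-1}du$ for some $F\in\cC^w(X)$. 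Examining the proof of that lemma, $F$ may be chosen as $W_\mathcal{L}^*(w)$ for some $w\in\cW(\pi^\vee,\psi_n)$, so $F$ lies in the $\pi^\vee$-isotypic part of $\cC^w(X)$, hence in the Bernstein-component part $\cC^w(X)[\mathfrak{s}]$, where $\mathfrak{s}\in\cB(G)$ is the Bernstein component of $\pi^\vee$.

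Next, I would use Proposition \ref{prop HCS}(i) and the continuity of the $(N,\psi_n)$-regularized integral on $\cC^w(G)$ to extend the bilinear form $J$ of \eqref{def J} continuously in its second argument to $\cC^w(X)$, so that $\mathcal{L}(\varphi)=J(\varphi,F)$. Then I would apply the spectral trace formula (Theorem \ref{theo JY TF spec}) to test functions of the form $e\cdot\varphi_2$, where $e\in\cZ(G)$ is the Bernstein idempotent for $\mathfrak{s}$ and $\varphi_2\in C_c^\infty(X)$. Since $e$ acts on $C_c^\infty(X)_{\BC(\sigma)}$ by the scalar $e(\BC(\sigma)^\vee)$, which equals $1$ exactly when $\BC(\sigma)$ belongs to the Bernstein component of $\pi$ in $\cB(G)$, this yields
$$J(\varphi,e\cdot\varphi_2)=\int_{\Temp(G')}e(\BC(\sigma)^\vee)\,J_\sigma(\varphi)\overline{J_\sigma(\varphi_2)}\,d\mu_{G'}(\sigma).$$
For $\sigma\in\Temp(G')$ the representation $\BC(\sigma)$ is tempered and hence generic; combining Proposition \ref{prop local isom lambda} (the injectivity of $\lambda$ on the generic locus) with the fact that for $\GL_n$ each Bernstein component meets $\Temp(G)\cap\Irr^{\gen}(G)$ in exactly one connected component of $\Temp(G)$, this restricts the effective support of the integrand to $\BC^{-1}(\Omega_\pi^t)$. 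By the standing hypothesis the integrand vanishes identically, giving $J(\varphi,e\cdot\varphi_2)=0$ for all $\varphi_2\in C_c^\infty(X)$.

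To conclude, I would approximate $F\in\cC^w(X)[\mathfrak{s}]$ by a sequence $e\cdot\varphi_2^{(k)}\in C_c^\infty(X)[\mathfrak{s}]$ in the topology of $\cC^w(X)$. This density should follow from the density of $C_c^\infty(X)$ in $\cC^w(X)$, obtained by a truncation argument analogous to \eqref{density HCS}, combined with the continuity of the Bernstein projection $e$ on $\cC^w(X)$. By the continuity of $J(\varphi,\cdot)$ on $\cC^w(X)$ established in the previous step, passing to the limit in $J(\varphi,e\cdot\varphi_2^{(k)})=0$ gives $\mathcal{L}(\varphi)=J(\varphi,F)=0$, as required.

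The main obstacles I anticipate are, first, the technical verification that the bilinear form $J$ extends continuously to $\cC^w(X)$ in its second slot and that $C_c^\infty(X)[\mathfrak{s}]$ is dense in $\cC^w(X)[\mathfrak{s}]$ with the topology that makes $J(\varphi,\cdot)$ continuous; and second, the clean identification of $\{\sigma\in\Temp(G'):\BC(\sigma)$ lies in the Bernstein component of $\pi\}$ with $\BC^{-1}(\Omega_\pi^t)$, relying on the special structure of $\GL_n$ and Proposition \ref{prop local isom lambda} to ensure that Bernstein localization isolates exactly the desired spectral stratum.
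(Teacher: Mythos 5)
Your overall architecture is the same as the paper's: reduce by duality to showing that any $\varphi\in C_c^\infty(X)$ with $J_\sigma(\varphi)=0$ for all $\sigma\in\BC^{-1}(\Omega_\pi^t)$ is killed by every $\mathcal{L}\in\Hom_N(C_c^\infty(X)_\pi,\psi_n)$, represent $\mathcal{L}$ as $J(\cdot,F)$ with $F\in\cC^w(X)$ via Lemma \ref{lem1 abstract rc}, approximate $F$ inside $\cC^w(X)$ by functions to which the spectral expansion applies, and pass to the limit. The gap is in your localization step. The idempotent $e$ you invoke lives in the \emph{algebraic} Bernstein center $\cZ(G)$, so it only detects the connected component of $\lambda(\pi)$ in $\cB(G)$, i.e.\ the supercuspidal support up to unramified twist. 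This is strictly coarser than the component $\Omega_\pi$ of $\Irr(G)=\Sqr(G)/W$: distinct components of $\Irr(G)$, hence distinct components $\Omega^t$ of $\Temp(G)$, map into the same Bernstein block. Concretely, for $n=2$ let $\pi$ be an unramified tempered principal series of $G$; then the twists $\St_E\otimes\chi$ of the Steinberg representation by unramified unitary characters are tempered, generic, have supercuspidal support in the same block $[T,\mathbf{1}]$ as $\lambda(\pi)$, but lie in a different component of $\Temp(G)$, and they are base changes of (twists of) $\St_F$. So your claim that each Bernstein component meets $\Temp(G)\cap\Irr^{\gen}(G)$ in exactly one connected component of $\Temp(G)$ is false, and Proposition \ref{prop local isom lambda} does not repair it: injectivity of $\lambda$ on the generic locus says that two generic representations with the \emph{same} cuspidal support coincide, not that the Bernstein block determines the component. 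Consequently $e$ acts by $1$ on $C_c^\infty(X)_{\BC(\sigma)}$ for many $\sigma\in\Temp(G')$ outside $\BC^{-1}(\Omega_\pi^t)$, your displayed integral is not supported on $\BC^{-1}(\Omega_\pi^t)$, and its vanishing does not follow from the hypothesis on $\varphi$.

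The repair is the device the paper actually uses: instead of an algebraic Bernstein idempotent, take the \emph{tempered} projector $e_{\Omega_\pi^t}$ furnished by Harish-Chandra's description of the image of $\cC(G)$ under the operator-valued Fourier transform (equivalently, by the Schneider--Zink tempered Bernstein center). On the tempered dual the sets $\Omega^t$ are open and closed, so this projector isolates exactly $\Omega_\pi^t$. Applying it to the \emph{first} argument (after extending $J$ and the $J_\sigma$ continuously to $\cC(X)$ and transporting the projector to $\cC(X)$ via Proposition \ref{prop HCS}(ii)) upgrades the hypothesis to $J_\sigma(\varphi)=0$ for \emph{all} $\sigma\in\Temp(G')$; one then concludes with your truncation-and-limit step (e.g.\ $F_k=\mathbf{1}_{X_k}F\to F$ in $\cC^w(X)$) combined with Theorem \ref{theo JY TF spec2} and the continuity statement of Proposition \ref{prop HCS}(i). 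The remaining technical points you flag (continuity of $J(\varphi,\cdot)$ on $\cC^w(X)$, density of truncations) are sound, but as written the Bernstein-center localization does not isolate the desired spectral stratum and the proof does not close.
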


\begin{proof}
Let $J\in \Hom_N(C_c^\infty(X)_\pi, \psi_n)$. We need to show that for every $\varphi\in C_c^\infty(X)$ such that $J_\sigma(\varphi)=0$ for all $\sigma\in \BC^{-1}(\Omega_\pi^t)$ we have $J(\varphi)=0$. By Lemma \ref{lem1 abstract rc}, $J$ and the relative characters $J_\sigma$, for $\sigma\in \Temp(G')$, extend continuously to $\cC(X)$ and we will prove that the previous property holds more generally for $\varphi\in \cC(X)$.

The application $f\in \cC(G)\mapsto (\pi'\in \Temp(G)\mapsto \pi'(f))$ is injective and its image was described by Harish-Chandra \cite[Th\'eor\`emes VII.2.5 et VIII.1.1]{Wald1}. A consequence of this description is that there exists a projector $f\in \cC(G)\mapsto e_{\Omega_\pi^t}\ast f\in \cC(G)$ which is equivariant with respect to both left and right convolutions such that for every $f\in \cC(G)$ and $\pi'\in \Temp(G)$ we have \footnote{The existence of such a projector can also be deduced from the description of the {\em tempered Bernstein center} by Schneider and Zink \cite{SZ}}
\begin{equation}\label{eq1 tempered projector}
\displaystyle \pi'(e_{\Omega_\pi^t}\ast f)=\left\{
\begin{array}{ll}
\pi'(f) & \mbox{ if } \pi'\in \Omega_\pi^t, \\
0 & \mbox{ otherwise.}
\end{array}
\right.
\end{equation}
By Proposition \ref{prop HCS}(ii), we can define a similar projector $\varphi\in \cC(X)\mapsto e_{\Omega_\pi^t}\ast \varphi\in \cC(X)$: for $\varphi\in \cC(X)$, choose any $f\in C_c^\infty(G)$ such that $\varphi=R(f)\varphi$ (e.g. $\vol(K_0)^{-1}\mathbf{1}_{K_0}$ for a sufficiently small compact-open subgroup $K_0$) and set $e_{\Omega_\pi^t}\ast \varphi=R(e_{\Omega_\pi^t}\ast f)\varphi$: the fact that $e_{\Omega_\pi^t}\ast.$ is equivariant with respect to right convolution ensures that the result does not depend on the choice of $f$.

Let $\pi'\in \Temp(G)$ and $T_{\pi'}:\cC(X)\to \pi'$ be a continuous $G$-equivariant linear map where {\em continuous} here means that for every compact-open subgroup $K_0$ of $G$, the restriction $\cC(X)^{J}\to (\pi^\vee)^{K_0}$ is continuous. Then, $T_{\pi'}(R(f)\varphi)=\pi'(f)T_{\pi'}(\varphi)$ for every $(f,\varphi)\in \cC(G)\times \cC(X)$ and therefore, by \eqref{eq1 tempered projector} and the definition of $e_{\Omega_\pi^t}\ast \varphi$, it follows that:
\begin{equation}
\displaystyle T_{\pi'}(e_{\Omega_\pi^t}\ast \varphi)=\left\{
\begin{array}{ll}
T_{\pi'}(\varphi) & \mbox{ if } \pi'\in \Omega_\pi^t, \\
0 & \mbox{ otherwise}
\end{array}
\right.
\end{equation}
for all $\varphi\in \cC(X)$.

By Frobenius reciprocity, $J$ and $J_{\sigma}$, for $\sigma\in \Temp(G')$, induce continuous $G$-equivariant linear maps $\cC(X)\to \cW(\pi^\vee,\psi_n)$ and $\cC(X)\to \cW(\BC(\sigma)^\vee,\psi_n)$ respectively. Thus, by the above, we have
$$\displaystyle J(e_{\Omega_\pi^t}\varphi)=J(\varphi)$$
and
$$\displaystyle J_\sigma(e_{\Omega_\pi^t}\varphi)=\left\{
\begin{array}{ll}
J_\sigma(\varphi) & \mbox{ if } \sigma\in \BC^{-1}(\Omega_\pi^t), \\
0 & \mbox{ otherwise,}
\end{array}
\right.$$
for every $\varphi\in \cC(X)$ and $\sigma\in \Temp(G')$.

As a consequence, up to replacing $\varphi$ by $e_{\Omega_\pi^t}\varphi$, we only need to show that:
\begin{equation}\label{eq1 1st step mult}
\mbox{For every } \varphi\in \cC(X)\mbox{ such that }J_\sigma(\varphi)=0\mbox{ for every }\sigma\in \Temp(G'),\mbox{ we have }J(\varphi)=0.
\end{equation}
We henceforth fix a function $\varphi\in \cC(X)$ satisfying $J_\sigma(\varphi)=0$ for every $\sigma\in \Temp(G')$.

By Lemma \ref{lem1 abstract rc}, there exists $F\in \cC^w(X)$ such that
$$\displaystyle J(\varphi)=\int_N^* \langle R(u)\varphi,F\rangle_X \psi_n(u)^{-1}du.$$
Let $(X_k)_{k\geqslant 1}$ be an increasing and exhausting sequence of $K$-invariant compact subsets of $X$ and set $F_k=\mathbf{1}_{X_k} F$ for every $k\geqslant 1$. We can show, by the same argument as for \eqref{density HCS}, that the sequence $(F_k)_{k\geqslant 1}$ converges to $F$ in $\cC^w(X)$. Hence, by Proposition \ref{prop HCS}(i), we have
$$\displaystyle J(\varphi)=\lim\limits_{k\to \infty} \int_N^* \langle R(u)\varphi,F_k\rangle_X \psi_n(u)^{-1}du=\lim\limits_{k\to \infty}J(\varphi,F_k)$$
with the notation of Section \ref{section JY TF geom}. Therefore, by Theorem \ref{theo JY TF spec2} and the hypothesis made on $\varphi$, we have
$$\displaystyle J(\varphi)=\lim\limits_{k\to \infty}\int_{\Temp(G')} J_\sigma(\varphi)\overline{J_\sigma(F_k)}d\mu_{G'}(\sigma)=0.$$
This shows \eqref{eq1 1st step mult} and ends the proof of the proposition.
\end{proof}

\subsection{End of the proof of Theorem \ref{theo mult}}\label{section end of proof multiplicities}

For convenience, here we normalize the action of the Bernstein center $\cZ(G)$ on $C_c^\infty(X)$ such that $z\in \cZ(G)$ acts on the coinvariant space $C_c^\infty(X)_{\pi}$ by the scalar $z(\lambda(\pi))$ for every $\pi\in \Irr(G)$.

Let $\pi\in \Temp(G)$ and $\Omega_\pi\subseteq \Irr(G)$ be the connected component of $\pi$. Set $\Omega_\pi^t=\Omega_\pi\cap\Temp(G)$, $\Omega_\pi'=\BC^{-1}(\Omega_\pi)$, $\Omega_\pi^\lambda=\lambda(\Omega_\pi)\subseteq \cB(G)$  and $(\Omega_\pi')^\lambda=\lambda(\Omega'_\pi)\subseteq \cB(G')$. 
Let $V$ be the space of functions of the form
$$\displaystyle \sigma\in \BC^{-1}(\Omega_\pi^t)\mapsto J_\sigma(\varphi)$$
where $\varphi\in C_c^\infty(X)$. Then, $V$ is a quotient of $C_c^\infty(X)$ by a $\cZ(G)$-submodule. Moreover, by the definition of FLO functionals (Theorem \ref{theo FLO funct}) and the existence of the Jacquet-Ye transfer (Theorem \ref{theo JY transfer}), $V$ is also the space of functions of the form
$$\displaystyle \sigma\in \BC^{-1}(\Omega_\pi^t)\mapsto I_\sigma(f')$$
where $f'\in C_c^\infty(G')$. Note that, by Lemma \ref{lem finite morphism}, $(\Omega_\pi')^\lambda$ is Zariski closed in $\cB(G')$. Therefore, by Theorem \ref{theo Whittaker-Paley-Wiener theorem}, $V$ is the space of restrictions to $\BC^{-1}(\Omega_\pi^t)$ of the algebra of regular functions $\bC[(\Omega_\pi')^\lambda]$ on $(\Omega_\pi')^\lambda$ through the map $\lambda$. As $\BC^{-1}(\Omega_\pi^t)=\Omega_\pi'\cap \Temp(G')$ is Zariski dense in $\Omega_\pi'$ by \eqref{density tempered}, this gives an isomorphism
\begin{equation}\label{isom V}
\displaystyle V\simeq \bC[(\Omega_\pi')^\lambda]
\end{equation}
through which the action of $\cZ(G)$ is given by the pullback $\BC^*: \cZ(G)=\bC[\cB(G)]\to \bC[\cB(G')]$.

Let $\mathfrak{m}_{\lambda(\pi)}\subseteq \cZ(G)$ be the maximal ideal corresponding to $\lambda(\pi)\in \cB(G)$. Then, by Proposition \ref{prop1 1st step mult}, each element of $\Hom_N(C_c^\infty(X)_\pi,\psi_n)$ factorizes through the quotient $C_c^\infty(X)\to V$ and therefore, by the theory of the Bernstein center and the isomorphism \eqref{isom V}, also through
$$\displaystyle V/\mathfrak{m}_{\lambda(\pi)}V\simeq \bC[(\Omega_\pi')^\lambda]/\mathfrak{m}_{\lambda(\pi)}\bC[(\Omega_\pi')^\lambda].$$
Consequently, by Lemma \ref{lem1 1st step mult}, we have
\begin{equation}\label{1st ineq multiplicity}
\displaystyle m(\pi)\leqslant \dim(\bC[(\Omega_\pi')^\lambda]/\mathfrak{m}_{\lambda(\pi)}\bC[(\Omega_\pi')^\lambda]).
\end{equation}

Consider the following commutative diagram (coming from restriction of \eqref{diagram bc})
$$\displaystyle \xymatrix{ \Omega_\pi' \ar[r]^{\BC} \ar[d]^{\lambda} & \Omega_\pi \ar[d]^{\lambda} \\ (\Omega_\pi')^\lambda \ar[r]^{\BC} & \Omega_\pi^\lambda.}$$
By Proposition \ref{prop local isom lambda} and Lemma \ref{lem BC regular} (i), the two vertical arrows are isomorphisms when restricted to suitable Zariski open neighborhood of $\lambda(\pi)$ and $\BC^{-1}(\lambda(\pi))\cap (\Omega'_\pi)^{\lambda}=\lambda(\BC^{-1}(\pi))$. Therefore,
$$\displaystyle \bC[(\Omega_\pi')^\lambda]/\mathfrak{m}_{\lambda(\pi)}\bC[(\Omega_\pi')^\lambda]\simeq \bC[\Omega_\pi']/\mathfrak{m}_{\pi}\bC[\Omega_\pi'].$$
Combining this with \eqref{1st ineq multiplicity}, we obtain
$$\displaystyle m(\pi)\leqslant \dim(\bC[\Omega_\pi']/\mathfrak{m}_{\pi}\bC[(\Omega_\pi')]) =\deg \BC(\pi).$$

We have just proven that \eqref{eq1 first step} holds for every $\pi\in \Temp(G)$ and therefore, by Proposition \ref{prop first reduction}, also for every $\pi\in \Irr^{\gen}(G)$. This ends the proof of Theorem \ref{theo mult}.

\section{A Plancherel formula for $X$ and relation to factorization of global periods}\label{chapter Plancherel formula}

In this chapter, we keep the notation introduced in the Chapters \ref{chapter invt funct} and \ref{chapter JY TF} and we don't assume anymore that $F$ is a $p$-adic field (i.e. we allow $F=\mathbb{R}$). The goal of this part is to establish an explicit Plancherel formula for $X$. More precisely, we will prove that the $L^2$-inner product $\langle .,.\rangle_X$ on $X$ decomposes as an integral of certain $G$-invariant semi-positive Hermitian forms $\langle .,.\rangle_{X,\sigma}$ that are indexed by $\sigma\in \Temp(G')$ and ``living on $\BC(\sigma)$'' in the sense that they factorize through the $\BC(\sigma)^\vee$-coinvariant space $C_c^\infty(X)_{\BC(\sigma)}$ (see Theorem \ref{theo PLancherel formula}). The Hermitian forms $\langle .,.\rangle_{X,\sigma}$ are defined through the FLO functionals $\alpha^\sigma$ of Section \ref{section FLO funct} and moreover the underlying spectral measure is the Plancherel measure $d\mu_{G'}$ of $G'$. According to Bernstein \cite{Ber3}, such a decomposition induces an isomorphism of unitary representations
\begin{equation}\label{isom L2X}
\displaystyle L^2(X)\simeq \int_{\Temp(G')}^{\oplus}\BC(\sigma) d\mu_{G'}(\sigma)
\end{equation}
and it is actually also equivalent to a certain Plancherel inversion formula expressing any test function $\varphi\in C_c^\infty(X)$ as an integral of ``generalized eigenfunctions'' $\varphi_\sigma$ (see Theorem \ref{Theorem Plancherel inversion}). The isomorphism \eqref{isom L2X} can be seen as a particular case of a general conjecture of Sakellaridis-Venkatesh on the $L^2$-spectrum of spherical varieties \cite[Conjecture 16.2.2]{SV}. More precisely, in \cite{SV} a dual group is associated to any spherical variety\footnote{This construction actually only works well under a suitable extra technical condition (namely that the spherical variety has no root of `type N') for which we refer the reader to {\em loc. cit.}} which for the case at hand is the group $\check{G}_X=\GL_n(\bC)=\check{G}'$ coming with a natural ``distinguished morphism'' $\check{G}_X\to \check{G}$ to the dual group of $G$. Here, this morphism extends naturally to the base-change map between $L$-groups ${}^L G'\to {}^L G$ and \cite[Conjecture 16.2.2]{SV}, suitably interpreted, predicts exactly a decomposition of the $G$-unitary representation $L^2(X)$ of the form \eqref{isom L2X}. A concrete consequence of this Plancherel decomposition is a description of the so-called {\em relative discrete series} of $X$ (see Corollary \ref{cor discrete series}).

The precise statement of the Plancherel formula is given in the next section. The proof, which is relatively short and builds upon the local Jacquet-Ye trace formula of Chapter \ref{chapter JY TF} together with the Fourier inversion formula \eqref{eq1 Fourier inversion}, occupies Section \ref{sect proof Plancherel}. In the final Section \ref{sect global periods}, we revisit the work of Feigon-Lapid-Offen \cite{FLO} on the factorization of unitary periods (generalizing previous work of Jacquet \cite{Jac01}) to make the relation to the local Plancherel decomposition we have obtained more transparent. That there is such a relation is of course not surprising, since the FLO functionals we use to compute the Plancherel decomposition are also the main local input in {\em loc. cit.} to the global period factorization, but once properly reformulated we find this connection to be in striking accordance with general speculations of Sakellaridis-Venkatesh on the factorization of global spherical periods \cite[\S 17]{SV} which is why we have included such a discussion here.

\subsection{The statement}\label{section Planch formula statement}

Let $\sigma\in \Temp(G')$. Recall from Section \ref{section FLO funct} that to $\sigma$ is associated a functional $\alpha^\sigma\in \mathcal{E}_G(X,\cW(\pi,\psi_n)^*)$ where $\pi=\BC(\sigma)$. For $\varphi\in C_c^\infty(X)$, we construct as in Section \ref{section FLO funct} a smooth functional $\varphi\cdot \alpha^\sigma\in \cW(\pi,\psi_n)^\vee$ that we identify with an element of $\cW(\pi^\vee,\psi_n^{-1})$ through the invariant inner product $\langle .,.\rangle_{\Whitt}$ \eqref{def pairing Whittaker}. For every $\varphi_1,\varphi_2\in C_c^\infty(X)$, we set
$$\displaystyle \langle \varphi_1,\varphi_2\rangle_{X,\sigma}:= \langle\varphi_1\cdot \alpha^\sigma, \varphi_2 \cdot \alpha^\sigma\rangle_{\Whitt}.$$
Obviously, $\langle .,.\rangle_{X,\sigma}$ is a $G$-invariant positive semi-definite Hermitian form that factorizes through the $\pi^\vee$-coinvariants $C_c^\infty(X)\to C_c^\infty(X)_\pi$.

Finally, recall that $\langle .,.\rangle_X$ stands for the $L^2$-scalar product on $X$ and $d\mu_{G'}$ denotes the Plancherel measure on $G'$.

\begin{theo}\label{theo PLancherel formula}
For every $\varphi_1,\varphi_2\in C_c^\infty(X)$, we have
$$\displaystyle \langle \varphi_1,\varphi_2\rangle_X=\int_{\Temp(G')} \langle \varphi_1,\varphi_2\rangle_{X,\sigma} d\mu_{G'}(\sigma)$$
where the right hand side is absolutely convergent.
\end{theo}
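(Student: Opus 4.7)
The plan is to combine the local Jacquet--Ye trace formula (Theorem~\ref{theo local TF for X}) with the Whittaker--Fourier inversion formula~\eqref{eq1 Fourier inversion} on $G$, applied to the matrix coefficient $f(g)=\langle R(g)\varphi_1,\varphi_2\rangle_X$. By~\eqref{eq2 X tempered}, $f\in\cC^w(G)$, so the extension of~\eqref{eq1 Fourier inversion} to $\cC^w(G)$ (established right after its statement in Section~\ref{section groups, measures and the symmetric space X}) yields
\[\displaystyle \langle\varphi_1,\varphi_2\rangle_X=f(1)=\int_{N\backslash P}W_f(p,p)\,dp,\]
the integral being absolutely convergent.

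Next I would identify $W_f(p,p)$. Writing $p^{-1}up=p^{-1}\cdot u\cdot p$ and using unitarity of $R$ on $L^2(X)$, one has $\langle R(p^{-1}up)\varphi_1,\varphi_2\rangle_X=\langle R(u)R(p)\varphi_1,R(p)\varphi_2\rangle_X$, whence $W_f(p,p)=J(R(p)\varphi_1,R(p)\varphi_2)$ with $J$ as in~\eqref{def J}. Theorem~\ref{theo JY TF spec} (which is precisely Theorem~\ref{theo local TF for X} on the spectral side) then gives
\[\displaystyle W_f(p,p)=\int_{\Temp(G')}J_\sigma(R(p)\varphi_1)\overline{J_\sigma(R(p)\varphi_2)}\,d\mu_{G'}(\sigma).\]
Using the equivariance $\alpha^\sigma_{xg}=\alpha^\sigma_x\circ\BC(\sigma)(g)$ of the FLO functionals together with $G$-invariance of $\langle.,.\rangle_{\Whitt}$, I would check the identity $J_\sigma(R(p)\varphi)=W_{\varphi,\sigma}(p)$, where $W_{\varphi,\sigma}\in\cW(\BC(\sigma)^\vee,\psi_n^{-1})$ is the Whittaker function corresponding to $\varphi\cdot\alpha^\sigma$ under the identification $\cW(\BC(\sigma),\psi_n)^{*}\simeq\cW(\BC(\sigma)^\vee,\psi_n^{-1})$ provided by the Whittaker pairing. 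By the definition of that pairing one then has $\int_{N\backslash P}W_{\varphi_1,\sigma}(p)\overline{W_{\varphi_2,\sigma}(p)}\,dp=\langle\varphi_1,\varphi_2\rangle_{X,\sigma}$, so that formally swapping the $dp$ and $d\mu_{G'}(\sigma)$ integrations gives the theorem.

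The main (and essentially only) obstacle is to legitimate that swap. I would handle it by first treating the diagonal case $\varphi_1=\varphi_2=\varphi$, where the integrand $|W_{\varphi,\sigma}(p)|^2$ is nonnegative and Tonelli's theorem applies. The preceding steps then read
\[\displaystyle \|\varphi\|_X^2=\int_{N\backslash P}\int_{\Temp(G')}|W_{\varphi,\sigma}(p)|^2\,d\mu_{G'}(\sigma)\,dp=\int_{\Temp(G')}\langle\varphi,\varphi\rangle_{X,\sigma}\,d\mu_{G'}(\sigma),\]
which simultaneously proves the theorem for $\varphi_1=\varphi_2$ and the absolute convergence of the spectral side in that case. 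For general $\varphi_1,\varphi_2$, the bound $|W_{\varphi_1,\sigma}(p)\overline{W_{\varphi_2,\sigma}(p)}|\leqslant\tfrac{1}{2}(|W_{\varphi_1,\sigma}(p)|^2+|W_{\varphi_2,\sigma}(p)|^2)$ combined with the diagonal result shows that the double integral is absolutely convergent, so Fubini applies and yields the stated identity. (Alternatively, one may invoke polarization directly.)
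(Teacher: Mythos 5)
Your proposal is correct and follows essentially the same route as the paper: Whittaker--Fourier inversion \eqref{eq1 Fourier inversion} applied to the $\cC^w(G)$ matrix coefficient $g\mapsto\langle R(g)\varphi_1,\varphi_2\rangle_X$, the spectral side of the local Jacquet--Ye trace formula (Theorem \ref{theo JY TF spec}), identification of $\int_{N\backslash P}J_\sigma(R(p)\varphi_1)\overline{J_\sigma(R(p)\varphi_2)}\,dp$ with $\langle\varphi_1,\varphi_2\rangle_{X,\sigma}$, and Tonelli on the diagonal followed by polarization. The only cosmetic difference is that you identify $J_\sigma(R(p)\varphi)$ directly as the Whittaker function of $\varphi\cdot\alpha^\sigma$ evaluated at $p$, whereas the paper packages the same fact as Lemma \ref{lem rel chars and scalar products} combined with \eqref{eq1 Fourier inversion}; both rest on \eqref{eq2 Fourier inversion}.
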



Note that the action of the center $Z(G)=E^\times$ on $X$ factorizes through the quotient $E^\times\to N(E^\times)$. Let $\chi:N(E^\times)\to \mathbb{S}^1$ be an unitary character and $L^2(X,\chi)$ be the space of functions $f:X\to \bC$ satisfying $f(xz)=\chi(z)f(x)$ for every $(x,z)\in X\times Z(G)$ and which are square-integrable on $X/Z(G)$. Let $L^2(X,\chi)_{\disc}$ the subspace generated by all the irreducible smooth submodules of $L^2(X,\chi)$ (the so-called {\em relative discrete series}) and $\Pi_{2,\chi}(G')$ be the subset of representations $\sigma\in \Pi_2(G')$ whose central character restricted to $N(E^\times)\subset Z(G')$ is equal to $\chi$. The above decomposition of $L^2(X)$ admits the following concrete representation-theoretic corollary.

\begin{cor}\label{cor discrete series}
There is a $G$-isomorphism
$$\displaystyle L^2(X,\chi)_{\disc}\simeq \bigoplus_{\sigma\in \Pi_{2,\chi}(G')}\BC(\sigma).$$
\end{cor}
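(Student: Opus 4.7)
The plan is to deduce this from the Plancherel isomorphism
$$L^2(X) \simeq \int^{\oplus}_{\Temp(G')} \BC(\sigma) \, d\mu_{G'}(\sigma)$$
(a consequence of Theorem \ref{theo PLancherel formula}) by disintegrating both sides along the action of the center $Z(G) = E^\times$. As noted just above the corollary, this action factors through the norm map $E^\times \to N(E^\times)$ (one checks on $\Herm_n^*$ that $z \in E^\times$ sends $h$ to $z^c z h = N_{E/F}(z) h$), so $L^2(X) = \int^{\oplus}_{\widehat{N(E^\times)}} L^2(X, \chi) \, d\chi$. On the spectral side, the central character of $\BC(\sigma)$ on $E^\times$ equals $\omega_\sigma \circ N_{E/F}$, corresponding to the character $\chi_\sigma := \omega_\sigma|_{N(E^\times)}$ of $N(E^\times)$. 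Disintegrating $\mu_{G'}$ along the continuous map $\sigma \mapsto \chi_\sigma$ yields fiber measures $\mu_{G', \chi}$ supported on the slices $\Temp_\chi(G') := \{\sigma \in \Temp(G') : \omega_\sigma|_{N(E^\times)} = \chi\}$, and Theorem \ref{theo PLancherel formula} then descends, for almost every $\chi$, to an isomorphism of unitary $G$-representations
$$L^2(X, \chi) \simeq \int^{\oplus}_{\Temp_\chi(G')} \BC(\sigma) \, d\mu_{G', \chi}(\sigma).$$

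The next step is to identify the atomic part of this direct integral, which by construction is $L^2(X,\chi)_{\disc}$. By Harish-Chandra's Plancherel formula for $G' = \GL_n(F)$, the measure $\mu_{G'}$ is supported on the strata $\Temp_{M,\tau}(G')$ indexed by pairs $(M,\tau) \in \Sqr(G')$ with $\tau \in \Pi_2(M)$, and restricts on each stratum to a smooth measure in the unitary unramified parameter $\chi' \in X_{\unit}(M)$. Since $N(E^\times)$ is an open, finite-index subgroup of $F^\times$, fixing the value of $\omega_\sigma$ on $N(E^\times)$ pins down $\chi'$ to a discrete subset of $X_{\unit}(M)$; hence the fiber measure $\mu_{G', \chi}$ is nonatomic on every stratum with $M \subsetneq G'$, and becomes purely atomic (with positive formal-degree weights) on the stratum $M = G'$. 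The intersection of this last stratum with $\Temp_\chi(G')$ is precisely $\Pi_{2,\chi}(G')$, so extracting the atomic part of the direct integral produces the desired isomorphism
$$L^2(X, \chi)_{\disc} \simeq \bigoplus_{\sigma \in \Pi_{2, \chi}(G')} \BC(\sigma).$$

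The main technical point is to justify the compatibility of the two disintegrations of $L^2(X)$ (by central character on the one hand and by the Plancherel formula on the other), so that atoms of the disintegrated Plancherel measure $\mu_{G',\chi}$ correspond bijectively to irreducible $G$-subrepresentations of $L^2(X,\chi)$. This rests on the fact that the two decompositions simultaneously diagonalize commuting families of operators (the action of $Z(G)$ and of the tempered Bernstein center of $G$), together with the measurability of $\sigma \mapsto \chi_\sigma$, so that a Fubini-type disintegration argument applies.
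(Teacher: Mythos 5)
Your strategy --- disintegrate the Plancherel decomposition of Theorem \ref{theo PLancherel formula} along the central character and identify $L^2(X,\chi)_{\disc}$ with the atomic part of the resulting fibre measure --- is exactly the argument the paper has in mind (the corollary is stated there as an immediate consequence of the theorem, with no further proof), and your identification of the atomic locus is correct: on a stratum attached to a proper Levi the central character condition leaves a positive-dimensional family of unitary unramified twists carrying a Lebesgue-type measure, while on the stratum $M=G'$ it cuts each circle of unramified twists down to a finite set carrying positive formal-degree mass, whose union over the component is precisely $\Pi_{2,\chi}(G')$. (One sentence is garbled: fixing $\omega_\sigma$ on $N(E^\times)$ pins the unitary parameter down to a \emph{discrete} set only when $M=G'$; for $M\subsetneq G'$ it imposes a single real condition on a parameter space of dimension at least two, which is exactly why the fibre measure there is nonatomic. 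The conclusion you draw is the right one.)

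The one genuine loose end is quantifier strength: abstract disintegration of the two decompositions of $L^2(X)$ only produces the isomorphism $L^2(X,\chi)\simeq\int^{\oplus}_{\Temp_\chi(G')}\BC(\sigma)\,d\mu_{G',\chi}(\sigma)$ for \emph{almost every} $\chi$, whereas the corollary is asserted for an arbitrary fixed $\chi$. This is repaired by a twisting argument. Writing $\det x\in F^\times$ for the determinant of the Hermitian matrix attached to $x$, one has $\det(x\cdot g)=N_{E/F}(\det g)\det(x)$, so multiplication by $\lvert\det x\rvert_F^{it}$ is a unitary isomorphism $L^2(X,\chi)\to L^2(X,\chi\lvert\cdot\rvert^{int})$ intertwining the $G$-action with its twist by $\lvert\det\rvert_E^{it}$. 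Since $\BC(\sigma)\otimes\lvert\det\rvert_E^{it}=\BC(\sigma\otimes\lvert\det\rvert_F^{it})$ and $\Pi_{2,\chi\lvert\cdot\rvert^{int}}(G')=\Pi_{2,\chi}(G')\otimes\lvert\det\rvert_F^{it}$, the validity of the asserted isomorphism is constant along each connected component of the unitary dual of $N(E^\times)$, and these components are exactly the orbits of such twists (because $N(E^\times)$ contains elements of nonzero valuation, resp.\ equals $\bR_{>0}$ when $F=\bR$). Hence ``almost every $\chi$'' upgrades to ``every $\chi$''. With this addition your proof is complete.
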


Let $x\in X$. The value of $\alpha^\sigma$ at $x$ is a $G_x$-invariant functional $\alpha^\sigma_x: \cW(\pi,\psi_n)\to \bC$. Identifying its complex conjugate $\overline{\alpha^\sigma_x}$ with a functional on $\overline{\cW(\pi,\psi_n)}=\cW(\pi^\vee,\psi_n^{-1})$, for every $\varphi\in C_c^\infty(X)$ we set
$$\displaystyle \varphi_\sigma(x)=\langle \varphi\cdot \alpha^\sigma, \overline{\alpha_x^\sigma}\rangle.$$
Note that the function $\varphi_\sigma$ generates (by right-translation) a representation isomorphic to $\pi^\vee=\BC(\sigma)^\vee$. In this sense, it is a ``generalized eigenfunction''. The following explicit ``Plancherel inversion formula'' follows from Theorem \ref{theo PLancherel formula} by specializing it to the case where $\varphi_1=\varphi$ and $\varphi_2=\mathbf{1}_{xK_0}$ for $K_0$ a sufficiently small compact-open subgroup of $G$ in the $p$-adic case. In the Archimedean case, we can argue in a similar way using the Dixmier-Malliavin theorem (details are left to the reader).

\begin{theo}\label{Theorem Plancherel inversion}
For every $\varphi\in C_c^\infty(X)$ and $x\in X$, we have
$$\displaystyle \varphi(x)=\int_{\Temp(G')} \varphi_\sigma(x) d\mu_{G'}(\sigma)$$
where the right hand side is absolutely convergent.
\end{theo}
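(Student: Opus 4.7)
The plan is to derive the pointwise inversion from the inner-product Plancherel formula (Theorem \ref{theo PLancherel formula}) by specializing the second test function $\varphi_2$ to an approximation of the Dirac mass at $x$. In the $p$-adic case, an honest characteristic function of a small neighborhood of $x$ suffices; the Archimedean case then follows by combining this argument with Dixmier-Malliavin smoothing.

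For the $p$-adic case, fix $\varphi\in C_c^\infty(X)$ and $x\in X$, and choose a compact-open subgroup $K_0\subset G$ such that $\varphi$ is right $K_0$-invariant. Set $\varphi_2=\vol(xK_0)^{-1}\mathbf{1}_{xK_0}$; since $\varphi$ is constant on $xK_0$, we have $\langle\varphi,\varphi_2\rangle_X=\varphi(x)$, and Theorem \ref{theo PLancherel formula} yields
$$\varphi(x)=\int_{\Temp(G')}\langle\varphi,\varphi_2\rangle_{X,\sigma}\,d\mu_{G'}(\sigma).$$
It therefore suffices to prove the pointwise identity $\langle\varphi,\varphi_2\rangle_{X,\sigma}=\varphi_\sigma(x)$ for each $\sigma\in\Temp(G')$.

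To establish this, write $\pi=\BC(\sigma)$ and $V=\cW(\pi,\psi_n)$, and define $v_\varphi\in V$ by $(\varphi\cdot\alpha^\sigma)(W)=\langle W,v_\varphi\rangle_{\Whitt}$ for all $W\in V$ (Riesz representation, using the invariant inner product to identify $\cW(\pi^\vee,\psi_n^{-1})$ with the smooth dual of $V$). Unfolding the definitions of the Hermitian form $\langle\cdot,\cdot\rangle_{X,\sigma}$ and of $\varphi_\sigma$, one checks that $\langle\varphi,\varphi_2\rangle_{X,\sigma}$ and $\varphi_\sigma(x)$ are equal to the complex conjugate of $(\varphi_2\cdot\alpha^\sigma)(v_\varphi)$ and of $\alpha^\sigma_x(v_\varphi)$ respectively, so the required equality reduces to
$$(\varphi_2\cdot\alpha^\sigma)(v_\varphi)=\alpha^\sigma_x(v_\varphi).$$
A direct computation using the $G$-equivariance $\alpha^\sigma(xk,W)=\alpha^\sigma(x,R(k)W)$ gives $(\varphi_2\cdot\alpha^\sigma)(W)=\alpha^\sigma_x(P_{K_0}W)$, where $P_{K_0}=\vol(K_0)^{-1}\int_{K_0}R(k)\,dk$ is the projection onto $K_0$-fixed vectors. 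On the other hand, right $K_0$-invariance of $\varphi$ makes $\varphi\cdot\alpha^\sigma$ a $K_0$-invariant functional on $V$, which by unitarity of the Whittaker pairing translates into $R(k)v_\varphi=v_\varphi$ for all $k\in K_0$. Hence $P_{K_0}v_\varphi=v_\varphi$ and the identity follows.

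For the Archimedean case, characteristic functions are not smooth, so we approximate. By Dixmier-Malliavin, decompose $\varphi=\sum_i R(f_i)\psi_i$ with $f_i\in C_c^\infty(G)$ and $\psi_i\in C_c^\infty(X)$, and let $(\chi_n)\subset C_c^\infty(X)$ be a sequence of non-negative bumps of total mass one with supports shrinking to $\{x\}$. Then $\sum_i\langle R(f_i)\psi_i,\chi_n\rangle_X\to\varphi(x)$ as $n\to\infty$, while the argument of the previous paragraph adapts with $\chi_n$ in place of $\mathbf{1}_{xK_0}$ to yield the pointwise convergence $\langle R(f_i)\psi_i,\chi_n\rangle_{X,\sigma}\to(R(f_i)\psi_i)_\sigma(x)$, using the continuity of $y\mapsto\alpha^\sigma(y,W)$. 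The main obstacle in this case is to interchange the limit in $n$ with the Plancherel integral over $\Temp(G')$; this is done by dominated convergence, producing a $\sigma$-integrable majorant uniform in $n$ through tempered growth estimates on the Plancherel density analogous to those of \cite[Proposition 2.131]{BeuPlanch}.
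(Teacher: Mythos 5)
Your proof takes essentially the same approach as the paper: the paper also derives the inversion formula from Theorem \ref{theo PLancherel formula} by taking $\varphi_2=\mathbf{1}_{xK_0}$ (a normalized characteristic function) in the $p$-adic case and invokes Dixmier--Malliavin for the Archimedean case, leaving the details to the reader exactly as you fill them in. Your computation of $\langle\varphi,\varphi_2\rangle_{X,\sigma}=\varphi_\sigma(x)$ via the projector $P_{K_0}$ and the $K_0$-invariance of the Riesz representative $v_\varphi$ is correct; in the Archimedean case the uniform-in-$n$ majorant should really come from the fact that $R(f_i^*)\chi_n$ stays in a bounded subset of $C_c^\infty(X)$ (the smoothing by $f_i$ transferred across the invariant pairing), rather than from $\chi_n$ directly, but this is the standard way to implement the dominated-convergence step you describe.
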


\subsection{Proof of Theorem \ref{theo PLancherel formula}}\label{sect proof Plancherel}

Note that, for every $\sigma\in \Temp(G')$ and $\varphi_1,\varphi_2\in C_c^\infty(X)$ and since the scalar product $\langle .,.\rangle_{X,\sigma}$ is $G$-invariant and factorizes through the $\pi^\vee=\BC(\sigma^\vee)$-coinvariants $C_c^\infty(X)_\pi$, the function $g\in G\mapsto \langle R(g)\varphi_1,\varphi_2\rangle_{X,\sigma}$ is a finite sum of matrix coefficients of $\pi^\vee$ hence belongs to $\cC^w(G)$. In particular, we can apply to it the regularized integral $\displaystyle \int_N^* . \psi_n(u)^{-1}du$ of Section \ref{section HCS}.

\begin{lem}\label{lem rel chars and scalar products}
For every $\sigma\in \Temp(G')$ and $\varphi_1,\varphi_2\in C_c^\infty(X)$, we have
$$\displaystyle J_\sigma(\varphi_1)\overline{J_\sigma(\varphi_2)}=\int_N^* \langle R(u) \varphi_1,\varphi_2\rangle_{X,\sigma}\psi_n(u)^{-1}du.$$
\end{lem}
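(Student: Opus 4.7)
The plan is to rewrite both sides of the asserted identity in the Whittaker model of $\pi^\vee$, where $\pi=\BC(\sigma)$, and then to invoke the variant of the Fourier inversion formula \eqref{eq2 Fourier inversion} for $\pi^\vee$. For any $\varphi\in C_c^\infty(X)$, the functional $\varphi\cdot\alpha^\sigma$ is, by definition, a smooth element of $\cW(\pi,\psi_n)^\vee$, and via $\langle .,.\rangle_{\Whitt}$ it is represented by a unique Whittaker function $\widetilde{W}_\varphi^\sigma\in \cW(\pi^\vee,\psi_n^{-1})$ characterized by
$$\langle W,\widetilde{W}_\varphi^\sigma\rangle_{\Whitt}=\int_X \varphi(x)\alpha^\sigma(x,W)\,dx,\quad W\in \cW(\pi,\psi_n).$$
Directly from the definitions one then has $J_\sigma(\varphi)=\widetilde{W}_\varphi^\sigma(1)$ and $\langle \varphi_1,\varphi_2\rangle_{X,\sigma}=\langle \widetilde{W}_{\varphi_1}^\sigma,\widetilde{W}_{\varphi_2}^\sigma\rangle_{\Whitt}$.

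Next I would record how the $G$-action on $X$ interacts with this identification. Using the $G$-invariance of $\alpha^\sigma$ and the sesquilinear $G$-invariance of $\langle .,.\rangle_{\Whitt}$,
$$\langle W,\widetilde{W}_{R(u)\varphi}^\sigma\rangle_{\Whitt}=\int_X \varphi(x)\alpha^\sigma(x,R(u)W)\,dx=\langle R(u)W,\widetilde{W}_\varphi^\sigma\rangle_{\Whitt}=\langle W,R(u^{-1})\widetilde{W}_\varphi^\sigma\rangle_{\Whitt},$$
so the map $\varphi\mapsto \widetilde{W}_\varphi^\sigma$ is antiequivariant, i.e.\ $\widetilde{W}_{R(u)\varphi}^\sigma=R(u^{-1})\widetilde{W}_\varphi^\sigma$ (the inversion reflects the antilinearity of $\langle .,.\rangle_{\Whitt}$ in the second variable).

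Putting this together, the right-hand side of the lemma becomes
$$\int_N^* \langle R(u^{-1})\widetilde{W}_{\varphi_1}^\sigma,\widetilde{W}_{\varphi_2}^\sigma\rangle_{\Whitt}\psi_n(u)^{-1}\,du=\int_N^* \langle R(u)\widetilde{W}_{\varphi_1}^\sigma,\widetilde{W}_{\varphi_2}^\sigma\rangle_{\Whitt}\psi_n(u)\,du$$
after the measure-preserving change of variable $u\mapsto u^{-1}$. Now $\pi^\vee\in \Temp(G)$ with Whittaker model relative to the character $\psi_n^{-1}$, so the analog of the Fourier inversion identity \eqref{eq2 Fourier inversion} stated for $G'$ (mentioned to hold also for $G$) specializes to
$$\int_N^* \langle R(u)W,W'\rangle_{\Whitt}\psi_n(u)\,du=W(1)\overline{W'(1)},\qquad W,W'\in \cW(\pi^\vee,\psi_n^{-1}).$$
Applying this with $W=\widetilde{W}_{\varphi_1}^\sigma$ and $W'=\widetilde{W}_{\varphi_2}^\sigma$ yields $\widetilde{W}_{\varphi_1}^\sigma(1)\overline{\widetilde{W}_{\varphi_2}^\sigma(1)}=J_\sigma(\varphi_1)\overline{J_\sigma(\varphi_2)}$, which is the left-hand side.

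There is no real obstacle here: the argument is essentially bookkeeping with the Whittaker identifications. The two subtle points to keep straight are (a) the antilinear equivariance of $\varphi\mapsto \widetilde{W}_\varphi^\sigma$, which is an artifact of the sesquilinearity of $\langle .,.\rangle_{\Whitt}$ and is the source of the change of variable $u\mapsto u^{-1}$ (hence of the flip $\psi_n^{-1}\leadsto \psi_n$, consistent with the fact that we pass from $\pi$ to $\pi^\vee$), and (b) the legitimacy of the regularized integrals, which is guaranteed by the observation, recorded just before the statement of the lemma, that $g\mapsto \langle R(g)\varphi_1,\varphi_2\rangle_{X,\sigma}$ is a finite sum of smooth matrix coefficients of $\pi^\vee$ and thus belongs to $\cC^w(G)$.
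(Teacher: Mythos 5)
Your argument is correct and is essentially the paper's own proof: the paper likewise rewrites $\langle R(u)\varphi_1,\varphi_2\rangle_{X,\sigma}$ as $\langle R(u^{-1})(\varphi_1\cdot \alpha^\sigma),\varphi_2\cdot \alpha^\sigma\rangle_{\Whitt}$ and concludes by the Fourier inversion identity \eqref{eq2 Fourier inversion} applied in $\cW(\BC(\sigma)^\vee,\psi_n^{-1})$. The only difference is that you spell out the intermediate steps (the Whittaker identification, the equivariance of $\varphi\mapsto\varphi\cdot\alpha^\sigma$, and the change of variable $u\mapsto u^{-1}$) that the paper leaves implicit.
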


\begin{proof}
By \eqref{eq2 Fourier inversion} and the definition of $\langle .,.\rangle_{X,\sigma}$ and $J_\sigma$, we have
\[\begin{aligned}
\displaystyle \int_N^* \langle R(u) \varphi_1,\varphi_2\rangle_{X,\sigma}\psi_n(u)^{-1}du & =\int_N^* \langle R(u^{-1})(\varphi_1\cdot \alpha^\sigma), \varphi_2\cdot \alpha^\sigma\rangle_{\Whitt}\psi_n(u)^{-1}du \\
 & =\langle \varphi_1\cdot \alpha^\sigma,\lambda^\vee_1\rangle \overline{\langle \varphi_2\cdot \alpha^\sigma,\lambda^\vee_1\rangle} =J_\sigma(\varphi_1)\overline{J_\sigma(\varphi_2)}
\end{aligned}\]
where we recall that $\lambda^\vee_1$ stands for the functional $W^\vee\in \cW(\pi^\vee,\psi^{-1}_n)\mapsto W^\vee(1)$.
\end{proof}

We can now finish the proof of Theorem \ref{theo PLancherel formula}. Since both $\langle .,.\rangle_X$ and $\langle .,.\rangle_{X,\sigma}$, $\sigma\in \Temp(G')$, are positive semi-definite Hermitian forms, by Cauchy-Schwarz and the polarization formula, it suffices to prove the theorem when $\varphi_1=\varphi_2=\varphi\in C_c^\infty(X)$. By \eqref{eq2 X tempered}, \eqref{eq1 Fourier inversion}, the definition \eqref{def J} of $J(\varphi,\varphi)$ and Theorem \ref{theo JY TF spec}, we have
\[\begin{aligned}
\displaystyle \langle \varphi,\varphi\rangle_X & =\int_{N\backslash P} J(R(p)\varphi,R(p)\varphi)dp \\
 & =\int_{N\backslash P}\int_{\Temp(G')} \lvert J_\sigma (R(p)\varphi)\rvert^2 d\mu_{G'}(\sigma)dp.
\end{aligned}\]
Since the integrand in the last expression above is nonnegative, this expression is absolutely convergent. By Lemma \ref{lem rel chars and scalar products} and the inversion formula \eqref{eq1 Fourier inversion}, we have
$$\displaystyle \int_{N\backslash P} \lvert J_\sigma (R(p)\varphi)\rvert^2dp=\langle \varphi,\varphi\rangle_{X,\sigma}$$
for every $\sigma\in \Temp(G')$. Hence, we get
\[\begin{aligned}
\displaystyle \langle \varphi,\varphi\rangle_X =\int_{\Temp(G')} \int_{N\backslash P} \lvert J_\sigma (R(p)\varphi)\rvert^2 dpd\mu_{G'}(\sigma) =\int_{\Temp(G')} \langle \varphi,\varphi\rangle_{X,\sigma} d\mu_{G'}(\sigma)
\end{aligned}\]
showing at once the identity and the convergence of the right-and side of Theorem \ref{theo PLancherel formula} when $\varphi_1=\varphi_2=\varphi$.

\subsection{Relation to the factorization of global periods}\label{sect global periods}

\textbf{In this section, we assume that $n$ is odd.}

Recall that there is a natural left $F^\times$-action on $X$. We denote the corresponding diagonal action by left translation of $F^\times$ on $C_c^\infty(X\times X)$ by $L^\Delta$ (that is $L^\Delta(\lambda)\Phi=\Phi(\lambda^{-1}.,\lambda^{-1}.)$ for $\Phi\in C_c^\infty(X\times X)$ and $\lambda\in F^\times$). Let $C_c^\infty(X\times X)_G$ be the $G$-coinvariant space of $C_c^\infty(X\times X)$ for the diagonal action by right translation of $G$. Then, we say that a function $\Phi\in C_c^\infty(X\times X)$ is {\em $F^\times$-stable} if for every $\lambda\in F^\times$, $\Phi-L^\Delta(\lambda)\Phi$ maps to $0$ in $C_c^\infty(X\times X)_G$. By \eqref{equivariance FLO functionals center}, we readily check that if $\Phi=\varphi_1\otimes \varphi_2$ is $F^\times$-stable then for every $\sigma\in \Temp(G')$, we have
\begin{equation}\label{eq annulation stable}
\displaystyle \langle\varphi_1\cdot \alpha^{\sigma\otimes \eta}, \varphi_2\cdot \alpha^\sigma\rangle_{\Whitt}=0.
\end{equation}

We now move to a global setting and consider a quadratic extension $k/k'$ of number fields. We write $\bA$ for the adele ring of $k'$, $\eta:\bA^\times/(k')^\times\to \{\pm 1 \}$ for the idele class character associated to the extension and for every place $v$ of $k'$, we denote by $k'_v$ the corresponding completion, by $\cO_v$ its ring of integers in case it is non-Archimedean and by $k_v$ the tensor product $k\otimes_{k'} k'_v$. We also change slightly notation to denote by $G'$ the group $\GL_n$ over $k'$, by $G=\Res_{k/k'}\GL_n$ the algebraic group obtained by restriction of scalar of $\GL_n$ from $k$ to $k'$ and by $X$ the algebraic variety (over $k'$) of non-degenerate Hermitian forms on $k^n$. There is a natural right action of $G$ on $X$ and for each place $v$ of $k'$ inert in $k$, the groups $G'_v=G'(k'_v)$, $G_v=G(k'_v)$ and the variety $X_v=X(k'_v)$ are what we have denoted $G'$, $G$ and $X$ so far for $F=k'_v$ and $E=k_v$.

When $v$ is inert in $k$, for every $\sigma_v\in \Temp(G'_v)$ we denote by $\langle .,.\rangle_{X_v,\sigma_v}$ the inner product on $C_c^\infty(X_v)$ defined in Section \ref{section Planch formula statement}. When $v$ splits in $k$, we define an inner product $\langle .,.\rangle_{X_v,\sigma_v}$ on $C_c^\infty(X_v)$ for every $\sigma_v\in \Temp(G'_v)$ as follows: choosing a place of $k$ above $v$ we get an identification $k_v\simeq k'_v\times k'_v$ and projection on the first component induces an isomorphism $X_v\simeq \GL_n(k'_v)=G'_v$, then we set
$$\displaystyle \langle \varphi_{1,v},\varphi_{2,v}\rangle_{X_v,\sigma_v}=\Tr(\sigma_v(\varphi_{1,v}\star \varphi_{2,v}^*)),\;\; \varphi_1,\varphi_2\in C_c^\infty(X_v),$$
where $\displaystyle (\varphi_{1,v}\star \varphi_{2,v}^*)(x)=\int_{X_v} \varphi_{1,v}(xy)\overline{\varphi_{2,v}(y)}dy$ (for $x\in X_v$) and $\displaystyle \sigma_v(\varphi_v)=\int_{G'_v} \varphi_v(h)\sigma_v(h)dh$ (for $\varphi_v\in C_c^\infty(X_v)=C_c^\infty(G'_v)$). Note that for these inner products, the analog of Theorem \ref{theo PLancherel formula} holds by Harish-Chandra Plancherel formula for $G'_v$.

When the place $v$ is split, by the above definition, it is clear that the inner product $\langle .,.\rangle_{X_v,\sigma_v}$ only depends on the choice of invariant measures on $X_v$ and $G'_v$. It is also true when $v$ is inert as follows from the identity of Theorem \ref{theo PLancherel formula} (the Plancherel measure $d\mu_{G'_v}(\sigma_v)$ is inversely proportional to the Haar measure on $G'_v$). This can alternativey be checked (slightly painfully) by tracing back all the constructions and normalizations of this paper (More precisely, we have made two auxilliary choices in the construction: a Haar measure on $T'$ and a nontrivial additive character $\psi'$).

We now normalize the local measures on $X_v$ and $G'_v$ so that they factorize the global invariant Tamagawa measures on $X(\bA)$ and $G'(\bA)$ and give, for almost all places $v$, volume $1$ to the subsets of integral points $X(\cO_v)$, $G'(\cO_v)$.

Let $\Phi=\varphi_1\otimes \varphi_2\in C_c^\infty(X(\bA))\otimes C_c^\infty(X(\bA))$ and assume that the functions $\varphi_1,\varphi_2$ are products $\varphi_1=\prod_v \varphi_{1,v}$, $\varphi_2=\prod_v \varphi_{2,v}$ where $\varphi_{1,v},\varphi_{2,v}\in C_c^\infty(X_v)$ for each place $v$ of $k'$. Let $\sigma=\bigotimes_v' \sigma_v$ be a cuspidal automorphic representation of $G'(\bA)$ such that for each place $v$, the local representation $\sigma_v$ is tempered. We denote by $L(s,\sigma, \Ad)$ (resp. $L(s,\sigma,\Ad\otimes \eta)$) the adjoint $L$-function $L(s,\sigma\times \sigma^\vee)$ (resp. the twisted adjoint $L$-function $L(s,\sigma\eta\times \sigma^\vee)$) of $\sigma$. For any finite set $S$ of places (resp. place $v$), we write $L^S(s,\sigma,\Ad)$ and $L^S(s,\sigma,\Ad\otimes \eta)$ (resp. $L(s,\sigma_v,\Ad)$ and $L(s,\sigma_v,\Ad\otimes \eta)$) for the corresponding partial $L$-functions (resp. local $L$-factors) and we set $L^{*,S}(1,\sigma,\Ad)=\Res_{s=1} L^S(s,\sigma,\Ad)$. Since $n$ is odd, $\sigma\not\simeq \sigma\otimes \eta$ and the partial $L$-function $L^S(s,\sigma,\Ad\otimes \eta)$ is regular at $s=1$ (for any $S$). Moreover, by the unramified computations of \cite[Lemma 3.9]{FLO} and \cite[Proposition 2.3]{JS}, for almost all places $v$ of $k'$ we have
$$\displaystyle \langle \varphi_{1,v},\varphi_{2,v}\rangle_{X_v,\sigma_v}=\frac{L(1,\sigma_v,\Ad\otimes \eta)}{L(1,\sigma_v,\Ad)}.$$
(Note that when $v$ is split, the right-hand side is simply $1$). Therefore, for any sufficiently large finite set of places $S$ of $k'$, we can set
$$\displaystyle \langle \varphi_1,\varphi_2\rangle_{X,\sigma}=\frac{L^S(1,\sigma,\Ad\otimes \eta)}{L^{*,S}(1,\sigma,\Ad)}\prod_{v\in S} \langle \varphi_{1,v},\varphi_{2,v}\rangle_{X_v,\sigma_v}.$$

Let $\varphi\in C_c^\infty(X(\bA))$. We denote by $\Sigma \varphi$ the function on $[G]=G(k')\backslash G(\bA)$ defined by
$$\displaystyle (\Sigma \varphi)(g)=\sum_{x\in X(k')} \varphi(xg),\;\;\; g\in [G].$$
Let $\pi$ be a cuspidal automorphic representation of $G(\bA)$. We equip it with the Petersson inner product
$$\displaystyle \langle \phi,\phi\rangle_{\Pet}=\int_{G(k')\backslash G(\bA)^1} \lvert \phi(g)\rvert^2 dg$$
where $G(\bA)^1$ is the subgroup of matrices $g\in G(\bA)=\GL_n(\bA_{k})$ ($\bA_{k}$ denoting the adele ring of $k$) such that $\lvert \det(g)\rvert=1$ and $dg$ is the Tamagawa measure (i.e. the one giving $G(k')\backslash G(\bA)^1$ volume $1$). We then write $(\Sigma \varphi)_\pi$ for the $\pi$-projection of $\Sigma \varphi$ that is
$$\displaystyle (\Sigma \varphi)_\pi=\sum_{\phi} \langle \Sigma \varphi, \phi\rangle_{[G]} \phi$$
where the sum runs over an orthonormal basis of $\pi$ and $\langle .,.\rangle_{[G]}$ stands for the $L^2$-inner product on $[G]$ (again with respect to the Tamagawa measure).

For any cuspidal automorphic representation $\sigma$ of $G'(\bA)$, we let $\BC(\sigma)$ be the automorphic base-change of $\sigma$ to $G(\bA)$ \cite{AC}.

The following result is simply a reformulation of a theorem of Feigon-Lapid-Offen \cite[Theorem 10.2]{FLO} on the factorization of unitary periods of cuspidal automorphic representations of $G$ (following an approach of Jacquet who has established a similar result when $n=3$ for quasi-split unitary groups \cite{Jac01}). The main reason to restate the result in the form below, is to make the relation to the explicit local Plancherel decomposition of Theorem \ref{theo PLancherel formula} more transparent. In particular, we find this formulation to be pleasantly aligned with certain speculations of Sakellaridis-Venkatesh on the factorization of general spherical periods \cite[\S 17]{SV}.

\begin{theo}[Feigon-Lapid-Offen, Jacquet (n=3)]\label{theo global}
Assume that $n$ is odd. Let $\Phi=\varphi_1\otimes \varphi_2\in C_c^\infty(X(\bA))\otimes C_c^\infty(X(\bA))$ be a factorizable test function $\Phi=\prod_v \Phi_v$ and let $\pi$ be a cuspidal automorphic representation of $G(\bA)$. Assume that for at least one inert place $v$, the function $\Phi_v$ is ${k'_v}^\times$-stable and that for every place $v$, the representation $\pi_v$ is tempered. Then, we have
\begin{equation}\label{reformulation global FLO result}
\displaystyle \langle (\Sigma \varphi_1)_\pi, (\Sigma \varphi_2)_\pi\rangle_{\Pet}=\sum_{\BC(\sigma)=\pi} \langle \varphi_1,\varphi_2\rangle_{X,\sigma}
\end{equation}
where the sum runs over cuspidal automorphic representations $\sigma$ of $G'(\bA)$ such that $\BC(\sigma)=\pi$.

\end{theo}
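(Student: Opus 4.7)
The approach is to combine the unfolding of $\langle (\Sigma\varphi_1)_\pi, (\Sigma\varphi_2)_\pi\rangle_{\Pet}$ into global unitary periods with the factorization of those periods due to Feigon-Lapid-Offen, and then to use the stability hypothesis to kill the off-diagonal contributions coming from distinct preimages of $\pi$ under base change. First I would expand the Petersson inner product by Parseval over an orthonormal basis of $\pi$ and unfold each pairing $\langle \Sigma\varphi_i,\phi\rangle_{[G]}$ along the decomposition $X(k') = \bigsqcup_{V\in \cV(k')} X_V(k')$. This gives an expression as a sum, indexed by pairs $(V_1,V_2)$ of equivalence classes of Hermitian forms over $k$, of integrals involving the unitary periods $\mathcal{P}_V(\phi) = \int_{[U(V)]} \phi(h) dh$ evaluated on suitable translates of $\phi\in \pi$.

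Next I would substitute the global factorization of unitary periods from \cite[Theorem 10.2]{FLO}: for $\phi\in \pi$ corresponding, via Parseval on Whittaker models, to a factorizable Whittaker vector $W = \prod_v W_v$, the period $\mathcal{P}_V(\phi)$ decomposes as a sum over cuspidal $\sigma$ with $\BC(\sigma)=\pi$, of a product $\prod_v \alpha^{\sigma_v}_{x_V}(W_v)$ of local FLO functionals, weighted by the $L$-factor ratio $L^S(1,\sigma,\mathrm{Ad}\otimes\eta)/L^{*,S}(1,\sigma,\mathrm{Ad})$. Plugging this into the unfolded expression and collapsing the basis sum by another application of Parseval on the Whittaker model of $\pi$, the local integrals on the $G(\bA)$-orbits of the $x_V$'s reassemble into local Whittaker pairings of the shape $\langle \varphi_{1,v}\cdot \alpha^{\sigma_{1,v}}, \varphi_{2,v}\cdot \alpha^{\sigma_{2,v}}\rangle_{\Whitt}$. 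This leaves a double sum over pairs $(\sigma_1,\sigma_2)$ of cuspidal representations of $G'(\bA)$ with $\BC(\sigma_i)=\pi$, each term being a product of these local pairings times the corresponding $L$-factor ratio.

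The main obstacle is to show that the cross terms with $\sigma_1\neq \sigma_2$ vanish. Since $n$ is odd and $\pi\simeq\pi^c$, the fiber $\BC^{-1}(\pi)$ consists exactly of pairs $\{\sigma, \sigma\otimes\eta\}$, so any off-diagonal pair satisfies $\sigma_2 = \sigma_1\otimes \eta$. At the distinguished inert place $v_0$ where $\Phi_{v_0}$ is $(k'_{v_0})^\times$-stable, the corresponding local factor in the cross term is $\langle \varphi_{1,v_0}\cdot\alpha^{\sigma_{v_0}},\varphi_{2,v_0}\cdot\alpha^{\sigma_{v_0}\otimes\eta_{v_0}}\rangle_{\Whitt}$, which vanishes by \eqref{eq annulation stable}. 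This kills all off-diagonal contributions and leaves only the diagonal terms $\sigma_1=\sigma_2=\sigma$. For such terms, the product of local Whittaker pairings is exactly $\prod_v\langle\varphi_{1,v},\varphi_{2,v}\rangle_{X_v,\sigma_v}$ by the definition of the local inner product given above, and combined with the $L$-factor ratio, which via the unramified computation recalled before the theorem incorporates the missing factors at $v\notin S$, this yields exactly $\langle\varphi_1,\varphi_2\rangle_{X,\sigma}$. A mild technical point worth verifying in passing is that at split places, where $X_v\simeq G'_v$ and the FLO functional reduces to an integration attached to a matrix coefficient of $\sigma_v$, the local specialization of FLO's factorization matches the matrix-coefficient definition of $\langle\cdot,\cdot\rangle_{X_v,\sigma_v}$, which is again built into the compatibility of the normalizations.
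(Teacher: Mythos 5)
Your overall route is the same as the paper's: unfold the Petersson pairing over an orthogonal basis, express $\langle \Sigma\varphi_i,\phi\rangle_{[G]}$ as a sum of unitary periods over rational orbits, substitute the factorization of \cite[Theorem 10.2]{FLO}, convert $\langle\phi,\phi\rangle_{\Pet}$ into the Whittaker norm via Jacquet--Shalika, and kill the off-diagonal $(\sigma,\sigma\otimes\eta)$ terms using \eqref{eq annulation stable} at the stable inert place. The identification of the fiber $\BC^{-1}(\pi)=\{\sigma,\sigma\otimes\eta\}$ and the unramified matching of local factors are also handled as in the paper.

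There is, however, a genuine gap in your ``reassembly'' step. The unfolding only produces a sum over $x\in X(k')/G(k')$ of integrals over $G_x(\bA)\backslash G(\bA)$, i.e.\ an integral over the union of the \emph{adelic} orbits that meet $X(k')$ --- not an integral over all of $X(\bA)$. But the local pairings $\langle\varphi_{1,v},\varphi_{2,v}\rangle_{X_v,\sigma_v}$ are defined through $\varphi_v\cdot\alpha^{\sigma_v}=\int_{X_v}\varphi_v(x)\alpha^{\sigma_v}_x(\cdot)\,dx$, an integral over the \emph{whole} of $X_v$; so the product $\prod_v\langle\varphi_{1,v},\varphi_{2,v}\rangle_{X_v,\sigma_v}$ does not come out of your computation without an extra argument. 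The paper bridges this by two inputs you do not mention: (a) global class field theory, which shows $X(k')/G(k')\hookrightarrow X(\bA)/G(\bA)$ with image exactly the orbits satisfying $\eta(\disc(x))=1$; and (b) \cite[Lemma 3.5]{FLO}, which gives $\varphi_x\cdot\alpha^{\sigma\otimes\eta}=\eta(\disc(x))\,\varphi_x\cdot\alpha^{\sigma}$, so that $\varphi_x\cdot\alpha^{\sigma}+\varphi_x\cdot\alpha^{\sigma\otimes\eta}$ equals $2\varphi_x\cdot\alpha^{\sigma}$ on the rational orbits and vanishes on the remaining adelic orbits. Summing over the fiber $\{\sigma,\sigma\otimes\eta\}$ therefore exactly converts the rational-orbit sum (with its factor $2$ from FLO's normalization $P_{G_x}(\phi)=2\alpha^\sigma_x(W_\phi)$) into a genuine integral over $X(\bA)$, after which the double sum over $(\sigma_1,\sigma_2)$ and the vanishing of the cross terms proceed as you describe. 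Without (a) and (b) your claim that the local integrals ``reassemble into local Whittaker pairings'' is unjustified, and this is precisely where the global arithmetic enters the proof.
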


\begin{proof}
Unfolding all the definitions, we arrive at
\begin{equation}\label{eq1 global}
\displaystyle \langle (\Sigma \varphi_1)_\pi, (\Sigma \varphi_2)_\pi\rangle_{\Pet}=\sum_{\phi} \frac{\langle \Sigma \varphi_1, \phi\rangle_{[G]} \langle \phi, \Sigma \varphi_2\rangle_{[G]}}{\langle \phi,\phi\rangle_{\Pet}}
\end{equation}
the sum being over an orthogonal basis of $\pi$ and
\begin{equation}\label{eq2 global}
\displaystyle \langle \Sigma \varphi_i, \phi\rangle_{[G]}=\sum_{x\in X(k')/G(k')} \int_{G_x(\bA)\backslash G(\bA)} \varphi_i(xg) \overline{P_{G_x}(R(g)\phi)} dg
\end{equation}
for $i=1,2$, where $\displaystyle P_{G_x}:\phi\mapsto \int_{[G_x]} \phi(h)dh$ denotes the period integral over $G_x$ and the measure on $G_x(\bA)$ is again the Tamagawa measure.

We now fix a global nontrivial additive character $\psi':\bA/k'\to \bC^\times$ and we set $\psi=\psi'\circ \Tra_{k/k'}:\bA_{k}/k\to \bC^\times$. For each place $v$ of $k$, we normalize the right Haar measures on the mirabolic subgroups $P'_v=P_n(k'_v)$ and $P_v=P_n(k_v)$ so that the Fourier inversion formulas \eqref{eq1 Fourier inversion} are satisfied for the local additive characters $\psi'_v$ and $\psi_v$. We also set $N'=N_{n,k'}$, $N=\Res_{k/k'} N_{n,k}$ and we equip $N'(\bA)$, $N(\bA)$ with the Haar measures giving $N'(k')\backslash N'(\bA)$, $N(k')\backslash N(\bA)$ volume $1$. With these normalizations, we can define local FLO functionals as in Section \ref{section FLO funct} by using Haar measures on the local groups $N'_v=N'(k'_v)$, $N_v=N(k'_v)$ that factorize the global ones. Finally, we define a generic character $\psi_n$ of $N(\bA)$ using the character $\psi$ as in the local case (see Section \ref{section groups, measures and the symmetric space X}).

Let $x\in X(k')$. By \cite[Theorem 10.2]{FLO}, $P_{G_x}$ vanishes on $\pi$ unless it is the base-change of some cuspidal automorphic representation $\sigma$ of $G'(\bA)$ in which case for any factorizable vector $\phi\in \pi$, we have
\begin{equation}
\displaystyle P_{G_x}(\phi)=2\alpha^\sigma_x(W_\phi)
\end{equation}
where $\displaystyle W_\phi(g)=\int_{[N]} \phi(ug)\psi_n(u)^{-1}du=\prod_v W_{\phi,v}$ is the Whittaker function associated to $\phi$ and $\alpha^\sigma_x(W_\phi)$ is defined by
$$\displaystyle \alpha_x^\sigma(W_\phi)=L(1,\sigma, \Ad\otimes \eta)\prod_{v} L(1,\sigma_v,\Ad\otimes \eta)^{-1}\alpha^{\sigma_v}_x(W_{\phi,v}).$$
From now on we assume that $\pi=\BC(\sigma)$ for some cuspidal automorphic representation $\sigma$ of $G'(\bA)$ (as otherwise the just quoted result of Feigon-Lapid-Offen implies that both sides of \eqref{reformulation global FLO result} are zero). Plugging this into \eqref{eq2 global}, we obtain
$$\displaystyle \langle \Sigma \varphi_i, \phi\rangle_{[G]}=2\sum_{x\in X(k')/G(k')}(\varphi_{i,x}\cdot\alpha^{\sigma})(W_{\phi})$$
for $i=1,2$ where $\varphi_{i,x}$ denotes the restriction of $\varphi_{i}$ to the $G(\bA)$-orbit of $x$ and we have set
$$\displaystyle (\varphi\cdot\alpha^{\sigma})(W_{\phi})=\int_{X(\bA)} \varphi(x) \alpha_{x}^\sigma(W_\phi)dx$$
for every $\varphi\in C_c^\infty(X(\bA))$ and $\phi\in \pi$. Together with \eqref{eq1 global}, this gives
\begin{align}\label{eq3 global}
\displaystyle \langle (\Sigma \varphi_1)_\pi, (\Sigma \varphi_2)_\pi\rangle_{\Pet}=4\sum_{x\in X(k')/G(k')}\sum_{\phi}\frac{(\varphi_{1,x}\cdot\alpha^{\sigma})(W_{\phi})\overline{(\varphi_{2,x}\cdot\alpha^{\sigma})(W_{\phi})}}{\langle \phi,\phi\rangle_{\Pet}}.
\end{align}
For any factorizable vector $\phi\in \pi$, we set
$$\displaystyle \langle W_\phi,W_\phi\rangle_{\Whitt}=L^*(1,\pi,\Ad)\prod_v L(1,\pi_v,\Ad)^{-1}\langle W_{\phi,v},W_{\phi,v}\rangle_{\Whitt}.$$
Then, by \cite[\S 4]{JS} (see also \cite[Eq. (10.1) p.265]{FLO} or \cite[Proposition 3.1]{Zha}\footnote{Note that the normalization of the Petterson inner product in {\em loc. cit.} is different from ours. Namely, there it is normalized as the $L^2$-inner product on $[\PGL_n]$ for the Tamagawa measure (thus giving $[\PGL_n]$ volume $n$).}), we have $\langle \phi,\phi\rangle_{\Pet}=\langle W_\phi,W_\phi\rangle_{\Whitt}$ so that \eqref{eq3 global} can be rewritten as
\begin{align}\label{eq4 global}
\displaystyle \langle (\Sigma \varphi_1)_\pi, (\Sigma \varphi_2)_\pi\rangle_{\Pet}=4\sum_{x\in X(k')/G(k')}\sum_{\phi}\frac{(\varphi_{1,x}\cdot\alpha^{\sigma})(W_{\phi})\overline{(\varphi_{2,x}\cdot\alpha^{\sigma})(W_{\phi})}}{\langle W_\phi,W_\phi\rangle_{\Whitt}}.
\end{align}
Let $\disc: X\to \mathbb{G}_m$ be the regular map that sends $x\in X$ to its discriminant in the standard basis of $k^n$. Then, by global class field theory, the natural map $X(k')/G(k')\to X(\bA)/G(\bA)$ is injective with image the set of orbits $x\in X(\bA)/G(\bA)$ such that $\eta(\disc(x))=1$. On the other hand, by \cite[Lemma 3.5]{FLO}, we have $\varphi_x\cdot \alpha^{\sigma\otimes \eta}=\eta(\disc(x)) \varphi_x\cdot \alpha^\sigma$ for $\varphi\in C_c^\infty(X(\bA))$ and $x\in X(\bA)$. This allows to rewrite the identity \eqref{eq4 global} as
\begin{align}\label{eq5 global}
\displaystyle & \langle (\Sigma \varphi_1)_\pi, (\Sigma \varphi_2)_\pi\rangle_{\Pet}=\sum_{x\in X(\bA)/G(\bA)}\sum_{\phi}\frac{(\varphi_{1,x}\cdot\alpha^{\sigma}+\varphi_{1,x}\cdot\alpha^{\sigma\otimes \eta})(W_{\phi})\overline{(\varphi_{2,x}\cdot\alpha^{\sigma}+\varphi_{2,x}\cdot\alpha^{\sigma\otimes \eta})(W_{\phi})}}{\langle W_\phi,W_\phi\rangle_{\Whitt}} \\
\nonumber & =\langle \varphi_1\cdot \alpha^\sigma,\varphi_2\cdot \alpha^\sigma\rangle_{\Whitt}+\langle \varphi_1\cdot \alpha^{\sigma\otimes \eta},\varphi_2\cdot \alpha^{\sigma\otimes \eta}\rangle_{\Whitt}+\langle \varphi_1\cdot \alpha^{\sigma}, \varphi_2\cdot \alpha^{\sigma\otimes \eta}\rangle_{\Whitt}+\langle \varphi_1\cdot \alpha^{\sigma\otimes \eta}, \varphi_2\cdot \alpha^{\sigma}\rangle_{\Whitt}
\end{align}
where as in the local case for every $\varphi\in C_c^\infty(X(\bA))$ we have identified $\varphi\cdot \alpha^{\sigma}$ and $\varphi\cdot \alpha^{\sigma\otimes \eta}$ with elements of the global Whittaker model $\cW(\pi^\vee,\psi_n^{-1})$ through the inner product $\langle .,.\rangle_{\Whitt}$. From the definitions it is clear that
$$\displaystyle \langle \varphi_1\cdot \alpha^\sigma,\varphi_2\cdot \alpha^\sigma\rangle_{\Whitt}=\langle \varphi_1,\varphi_2\rangle_{X,\sigma} \mbox{ and } \langle \varphi_1\cdot \alpha^{\sigma\otimes \eta},\varphi_2\cdot \alpha^{\sigma\otimes \eta}\rangle_{\Whitt}=\langle \varphi_1,\varphi_2\rangle_{X,\sigma\otimes \eta}$$
whereas the hypothesis that $\Phi_v$ is ${k'_v}^\times$-stable for at least one inert place $v$ implies (by \eqref{eq annulation stable}) that
$$\displaystyle \langle \varphi_1\cdot \alpha^{\sigma}, \varphi_2\cdot \alpha^{\sigma\otimes \eta}\rangle_{\Whitt}=\langle \varphi_1\cdot \alpha^{\sigma\otimes \eta}, \varphi_2\cdot \alpha^{\sigma}\rangle_{\Whitt}=0.$$
Together with \eqref{eq5 global} and the fact that the only cuspidal automorphic representations of $G'(\bA)$ with base-change $\pi$ are $\sigma$ and $\sigma\otimes \eta$ \cite[Theorem 4.2]{AC}, this gives identity \eqref{reformulation global FLO result}.
\end{proof}

\begin{finalremark}
To finish this paper, we would like to offer a word of explanation on the assumption in the theorem above and its relation to the (author's interpretation of) speculations made by Sakellaridis-Venkatesh in \cite[\S 17]{SV}\footnote{Strictly speaking, the situation considered here is not even covered in {\em loc. cit.} since they assume local multiplicity one. Therefore, our discussion should be seen as a kind of ``speculation over a speculation''.}. Namely, we can see the formal (non-convergent) expression $\RTF_{X\times X/G}(\Phi)=\langle \Sigma \varphi_1,\Sigma \varphi_2\rangle_{[G]}$ as a version of Jacquet's relative formula for the variety $X$. This expression decomposes (again formally) as a sum of orbital integrals of $\Phi$ for the diagonal action of $G$ on $X\times X$. Note that, in the case at hand, there is a stability issue: different rational orbits for this action may become the same over the algebraic closure. Therefore, a natural expectation would be that a stabilization process, similar to the one for the Arthur-Selberg trace formula, can lead to a stable version $\STF_{X\times X/G}(\Phi)$ of this trace formula. Now, we interpret\footnote{We of course try to follow the general spirit of Sakellaridis-Venkatesh's vision but any error or misinterpretation is the author's responsability only.} the speculations in \cite[\S 17]{SV} as saying that $\STF_{X\times X/G}(\Phi)$ should decompose as an integral over the $L^2$-automorphic spectrum of $G'$ (for a suitable canonical spectral measure) of the scalar product $\langle \varphi_1,\varphi_2\rangle_{X,\sigma}$. Of course, all of this is based on many formal statements that the author cannot make precise here (In particular, the scalar products $\langle .,.\rangle_{X,\sigma}$ have only been defined when $\sigma$ is tempered. The definition naturally extends to generic $\sigma$ but e.g. it is not obvious how to make sense of them for the residual representations.) but this at least can be used as a rationale for the statement of Theorem \ref{theo global}: the assumption of being ${k'_v}^\times$-stable should be seen as a weak version of stability in this context and the result roughly says that (when $n$ is odd) it is nevertheless enough to get the correct stable cuspidal contributions.
\end{finalremark}

\begin{flushleft}
Rapha\"el Beuzart-Plessis \\
Aix Marseille Univ, CNRS \\
Centrale Marseille \\
I2M  \\
Marseille \\
France
\medskip
	
email:\\
raphael.beuzart-plessis@univ-amu.fr \\
\end{flushleft}

\end{document}